\theoremstyle{plain}\newtheorem{theorem}{Theorem}[section]
\theoremstyle{plain}
\theoremstyle{plain}\newtheorem{lemma}[theorem]{Lemma}
\theoremstyle{plain}
\theoremstyle{definition}
\theoremstyle{definition}\newtheorem{remark}[theorem]{Remark}
\newtheorem{lemmaA}{Lemma}[section]
\theoremstyle{plain}\newtheorem{corollaryA}[lemmaA]{Corollary}
\theoremstyle{definition}
\def\rest{\hskip 1pt{\hbox to 10.8pt{\hfill
\vrule height 7pt width 0.4pt depth 0pt\hbox{\vrule height 0.4pt
width 7.6pt depth 0pt}\hfill}}}
\definecolor{airforceblue}{rgb}{0.36, 0.54, 0.66}
\definecolor{calpolypomonagreen}{rgb}{0.12, 0.3, 0.17}
\definecolor{brickred}{rgb}{0.8, 0.25, 0.33}
\title{ $\Gamma$-Convergence of the Ginzburg-Landau Functional with tangential boundary conditions}
\date{\today}
\begin{document}

\author[1]{Stan Alama\footnote{alama@mcmaster.ca}}
\author[1]{Lia Bronsard\footnote{bronsard@mcmaster.ca }}
\author[1]{Andrew Colinet\footnote{colineta@mcmaster.ca}}
\affil[1]{Department of Mathematics and Statistics, McMaster University, Hamilton, ON L8S 4L8 Canada}

\maketitle

\begin{abstract}
A classical result in the study of Ginzburg-Landau equations is that, for
Dirichlet or Neumann boundary conditions,
if a sequence of functions has energy uniformly bounded on a logarithmic
scale then we can find a subsequence whose Jacobians are convergent in suitable
dual spaces and whose renormalized energy is at least the sum of absolute
degrees of vortices.
However, the corresponding question for the case of tangential or normal
boundary conditions has not been considered.
In addition, the question of convergence of up to the boundary is not very
well understood.
Here, we consider these questions for a bounded, connected, open set of
$\mathbb{R}^{2}$ with $C^{2,1}$ boundary.
\end{abstract}

In this paper we study the Ginzburg-Landau functional in a problem with tangential $u\cdot \nu=0$ (or normal $u\cdot \tau=0$) boundary conditions. These are motivated by experimental results of Volovik and Lavrentovich \cite{VL} in which nematic drops are placed in an isotropic medium, allowing for the control of nematic boundary behaviour. For 3D samples,  Volovik and Lavrentovich found a single interior hedgehog defect when molecules are asked to be normal to the boundary and a bipolar boojum pair when requiring tangential conditions. Inspired by the physical phenomena observed in \cite{VL},   in \cite{ABvB} Alama, Bronsard, and van Brussel studied minimizers for the Ginzburg-Landau energy in a 2D setting with tangential and normal boundary conditions. In particular, they show that minimizers can exhibit half-degree vortices on the boundary. Here we study the full $\Gamma$-convergence in their setting, by extending the work of Jerrard and Soner from
\cite{JS}. In their work, Jerrard and Soner relate, through the framework of $\Gamma$-convergence,
convergence of the Jacobian in the interior
of the domain $\Omega\subseteq\mathbb{R}^{2}$ to lower bounds on energy, and hence to formation of interior defects.
We show that, under appropriate restrictions of the
functions along the boundary, we may extend the convergence of the Jacobian
to hold up to the boundary and hence recover boundary defects as well. 

To be more precise we will provide an explicit statement of our result. We begin by describing our variational problem. 
Note that an extended discussion about notation used in the paper can be
found in Section \ref{Background}. We  let $\Omega\subseteq\mathbb{R}^{2}$ be a bounded connected open set with
$C^{2,1}$-boundary.
In addition, we suppose that $\partial\Omega$ has $b+1$ connected components
where $b\ge0$. We introduce the function spaces
\begin{align*}
    W_{T}^{1,2}(\Omega;\mathbb{R}^{2})
    &\coloneqq\Bigl\{u\in{}W^{1,2}(\Omega;\mathbb{R}^{2}):u_{T}:=u\cdot\tau=0\Bigr\},
    \\
    W_{N}^{1,2}(\Omega;\mathbb{R}^{2})
    &\coloneqq\Bigl\{u\in{}W^{1,2}(\Omega;\mathbb{R}^{2}):u_{N}:=u\vert_{\partial\Omega}-u_{T}=0\Bigr\}.
\end{align*}

With this, we consider the Ginzburg-Landau energy defined on the above function spaces:
\noindent{}For $\varepsilon>0$, we let 
\begin{equation*}
E_{\varepsilon}(u)\coloneqq\int_{\Omega}\!{}
\frac{1}{2}\bigl|\nabla{}u\bigr|^{2}
+\frac{1}{4\varepsilon^{2}}\bigl(|u|^{2}-1\bigr)^{2}.
\end{equation*}

We are ready to state our theorem:

\begin{theorem}\label{ZerothOrderNormal}
\hspace{2pt}\vspace{-2pt}
\begin{enumerate}
    \item\label{Compactness}
        Suppose $\{u_{\varepsilon}\}_{\varepsilon\in(0,1]}\subseteq{}
        W_{T}^{1,2}(\Omega;\mathbb{R}^{2})$ satisfies
        $E_{\varepsilon}(u_{\varepsilon})
        \le{}C\mathopen{}\left|\log(\varepsilon)\right|\mathclose{}$
        for all $\varepsilon\in(0,1]$ and some $C>0$.
        Then, up to a subsequence that we do not relabel,
        we have that there is a signed Radon measure, $J_{*}$, supported on
        $\overline{\Omega}$ such that
        \begin{equation}\label{HolderDualConvergence}
            \lim_{\varepsilon\to0^{+}}\lVert\star{}J(u_{\varepsilon})-J_{*}
            \rVert_{(C^{0,\alpha}(\Omega))^{*}}=0
        \end{equation}
        for all $0<\alpha\le1$.
        In particular, we can express the limit, $J_{*}$, in terms of $M_1$ interior defects and of $M_{2,j}$ boundary defects around the $j$-th connected component of $\partial\Omega$:
        \begin{equation}\label{eq:LimitMeasure}
            J_{*}=\pi\sum_{i=1}^{M_{1}}d_{i}\delta_{a_{i}}
            +\frac{\pi}{2}\sum_{j=0}^{b}\sum\limits_{k=1}^{M_{2,j}}
            d_{jk}\delta_{c_{jk}},
        \end{equation}
        where $d_{i}$ and $d_{jk}$ are non-zero
        integers for $i=1,2,\ldots,M_{1}$, $j=0,1,\ldots,b$,
        and $k=1,2,\ldots,M_{2,j}$,
        $a_{i}\in\Omega$ and $c_{jk}\in(\partial\Omega)_{j}$ for
        $i=1,2,\ldots,M_{1}$, $j=0,1,\ldots,b$, 
        and $k=1,2,\ldots,M_{2,j}$, and
        \begin{equation}\label{TopologicalRestriction}
            \sum_{i=1}^{M_{1}}d_{i}
            +\frac{1}{2}\sum_{j=0}^{b}\sum_{k=1}^{M_{2,j}}d_{jk}=
            \chi_{Euler}(\Omega).
        \end{equation}
        In addition, we have that
        \begin{equation}\label{EvenInteger}
            \frac{1}{2}\sum_{k=1}^{M_{2,j}}d_{jk}\in\mathbb{Z}
        \end{equation}
        for each $j=0,1,\ldots,b$.
    \item\label{LiminfIneq}
        If $\{u_{\varepsilon}\}_{\varepsilon\in(0,1]}\subseteq{}
        W_{T}^{1,2}(\Omega;\mathbb{R}^{2})$ and
        $J_{*}=\pi\sum\limits_{i=1}^{M_{1}}d_{i}\delta_{a_{i}}
        +\dfrac{\pi}{2}\sum\limits_{j=0}^{b}
        \sum\limits_{k=1}^{M_{2,j}}d_{jk}\delta_{c_{jk}}$
        is a signed measure as described below \eqref{eq:LimitMeasure}
        satisfying \eqref{TopologicalRestriction} and
        \eqref{EvenInteger} as well as
        \begin{equation*}
            \lim_{\varepsilon\to0^{+}}\left\|\star{}J(u_{\varepsilon})
            -J_{*}\right\|_{(C^{0,\alpha}(\Omega))^{*}}=0,
        \end{equation*}
        for some $0<\alpha\le1$, then
        \begin{equation*}
            \pi\sum_{i=1}^{M_{1}}|d_{i}|
            +\frac{\pi}{2}\sum_{j=0}^{b}\sum_{k=1}^{M_{2,j}}|d_{jk}|
            \le
            \liminf_{\varepsilon\to0^{+}}\frac{E_{\varepsilon}
            (u_{\varepsilon})}
            {\mathopen{}\left|\log(\varepsilon)\right|\mathclose{}}.
        \end{equation*}
    \item\label{LimsupIneq}
        For each signed measure $J_{*}=\pi\sum\limits_{i=1}^{M_{1}}d_{i}\delta_{a_{i}}
        +\dfrac{\pi}{2}\sum\limits_{j=0}^{b}
        \sum\limits_{k=1}^{M_{2,j}}d_{jk}\delta_{c_{jk}}$ as described below
        \eqref{eq:LimitMeasure}
        satisfying \eqref{TopologicalRestriction} and \eqref{EvenInteger} we
        can find
        $\{u_{\varepsilon}\}_{\varepsilon\in(0,1]}\subseteq{}W_{T}^{1,2}(\Omega;\mathbb{R}^{2})$
        such that
        \begin{equation*}
            \lim_{\varepsilon\to0^{+}}\lVert\star{}J(u_{\varepsilon})
            -J_{*}\rVert_{(C^{0,\alpha}(\Omega))^{*}}
            =0,\,\,\,\forall\,0<\alpha\le1,\hspace{20pt}
            \limsup_{\varepsilon\to0^{+}}\frac{E_{\varepsilon}(u_{\varepsilon})}
            {\mathopen{}\left|\log(\varepsilon)\right|\mathclose{}}
            =\left\|J_{*}\right\|.
        \end{equation*}
\end{enumerate}
\end{theorem}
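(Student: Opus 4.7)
The plan is to extend the Jerrard--Soner strategy by carefully handling a collar neighborhood of $\partial\Omega$. For the compactness part \eqref{Compactness}, I begin by applying the standard Jacobian estimates to control $\|\star J(u_\varepsilon)\|_{(C^{0,\alpha}(\Omega))^*}$ by a quantity of order $E_\varepsilon(u_\varepsilon)/|\log\varepsilon|$; combined with the energy hypothesis, this gives weak-$*$ sequential compactness and produces a signed Radon measure $J_*$ on $\overline{\Omega}$. To identify the atomic structure \eqref{eq:LimitMeasure}, I partition $\overline{\Omega}$ into an interior bulk, where Jerrard's vortex ball construction isolates finitely many integer-degree defects $a_i$, and a boundary collar in which I flatten $\partial\Omega$ using a $C^{2,1}$ chart and reflect $u_\varepsilon$ across the straightened boundary compatibly with $u\cdot\tau=0$ (normal component extended evenly, tangential component oddly). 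The reflected field is defined on an interior domain and admits the bulk ball construction; since reflection doubles the local degree, each boundary concentration point carries half-integer weight. The topological identity \eqref{TopologicalRestriction} follows from a Poincar\'e--Hopf argument applied to $u_\varepsilon/|u_\varepsilon|$, whose boundary trace is nearly parallel to $\nu$. The evenness condition \eqref{EvenInteger} reflects the fact that $u_\varepsilon/|u_\varepsilon|$ must return to its initial value after one traversal of any component of $\partial\Omega$, and boundary defects appear as sign flips of this normal vector field, so their total count on each component must be even.

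For the liminf part \eqref{LiminfIneq}, I deploy the vortex ball construction on shrinking neighborhoods of each atom of $J_*$. At each interior $a_i$, the classical annular lower bound gives $\pi|d_i||\log\varepsilon|+O(1)$. At each boundary $c_{jk}$, the relevant region is $\Omega\cap(B_R\setminus B_r)$, which at scales below the curvature radius of $\partial\Omega$ is well-approximated by a half-annulus; the annular estimate then yields the halved lower bound $\tfrac{\pi}{2}|d_{jk}||\log\varepsilon|+O(1)$. Summing over all defects and dividing by $|\log\varepsilon|$ yields the desired inequality.

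For the recovery sequence \eqref{LimsupIneq}, I prescribe canonical Ginzburg--Landau vortex profiles near each defect: at an interior $a_i$ I use the standard ansatz $f_\varepsilon(|z-a_i|)\exp(id_i\arg(z-a_i))$ with $f_\varepsilon$ the optimal radial profile, and at a boundary $c_{jk}$ I work in a flattening chart and install a reflection-symmetric half-vortex of degree $d_{jk}/2$ that preserves the tangential condition. Away from the defects I interpolate using the canonical harmonic map on the punctured domain with tangent-field boundary data; conditions \eqref{TopologicalRestriction} and \eqref{EvenInteger} are precisely the topological obstructions whose vanishing permits such a canonical harmonic map to exist. Patching the inner vortex layer to the outer harmonic map via the standard Bethuel--Brezis--H\'elein-type estimates then gives $E_\varepsilon(u_\varepsilon)=\|J_*\|\cdot|\log\varepsilon|+O(1)$.

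The main obstacle will be the boundary-collar analysis in \eqref{Compactness}. In contrast with the Dirichlet or Neumann cases, the tangential condition does not, a priori, prevent vortices from accumulating at $\partial\Omega$, so one must show that limiting boundary concentrations are atomic with half-integer weight and, crucially, that the Jacobian converges in the dual H\"older norm all the way \emph{up to} $\partial\Omega$. The $C^{2,1}$-regularity of $\partial\Omega$ will be essential in the flattening and reflection step to guarantee that the extended field retains enough regularity for the interior ball-construction machinery to apply. Establishing \eqref{EvenInteger} will also require a careful tracking of the degree of $u_\varepsilon/|u_\varepsilon|$ along each boundary component throughout the limiting process, and proving the boundary recovery profile is compatible with the canonical harmonic map in the outer region will demand some delicacy.
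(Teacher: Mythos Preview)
Your overall strategy matches the paper's: reflection across a flattened boundary to reduce boundary analysis to the interior Jerrard--Soner machinery, the canonical harmonic map for the recovery sequence, and a degree/curvature argument for the Euler-characteristic constraint. Two points deserve comment.

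First, your argument for \eqref{EvenInteger} is where the proposal has a genuine gap. You say that $u_\varepsilon/|u_\varepsilon|$ ``must return to its initial value after one traversal'' and that boundary defects are ``sign flips,'' so their number is even. The problem is that $u_\varepsilon$ may vanish on $\partial\Omega$, so $u_\varepsilon/|u_\varepsilon|$ need not be defined there, and in any case the limiting boundary degrees $d_{jk}$ are recovered only from the weak limit of the Jacobian, not from counting zeros of a fixed $u_\varepsilon$. The paper handles this by a slicing argument: it picks Lipschitz test functions built from the distance to a boundary component $(\partial\Omega)_j$ and shows that on most nearby level curves $(\partial\Omega)_{j,t}$ one has simultaneously
\[
\frac{1}{2}\int_{(\partial\Omega)_{j,t}} ju_\varepsilon\cdot t_j \approx \frac{\pi}{2}\sum_k d_{jk}+\pi
\quad\text{and}\quad
\frac{1}{2}\int_{(\partial\Omega)_{j,t}} ju_\varepsilon\cdot t_j \approx \pi\,\mathrm{deg}(u_\varepsilon,(\partial\Omega)_{j,t}),
\]
the first from the dual H\"older convergence of the Jacobian together with the boundary identity $ju_\varepsilon\cdot\tau=|u_\varepsilon|^2\widetilde{\kappa}$ (which uses Lemma~\ref{lem:SmallBoundaryError} to control $\bigl||u_\varepsilon|^2-1\bigr|$ on $\partial\Omega$), and the second from the interior Jerrard--Soner estimates. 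Since the degree is an integer, $\tfrac12\sum_k d_{jk}\in\mathbb{Z}$. Your sign-flip heuristic does not supply this bridge between the weak Jacobian limit and an honest integer-valued degree.

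Second, for the lower bound you localize near each atom and invoke a half-annulus estimate at boundary points. The paper instead extends $u_\varepsilon$ by reflection to a genuine open neighborhood $\widetilde{\Omega}_{r/2}$ of $\overline{\Omega}$, proves Jacobian convergence on the enlarged domain, applies the interior Jerrard--Soner liminf on the collar $\widetilde{\Omega}_{r/2}$, and uses $E_\varepsilon(\widetilde{u}_\varepsilon,\widetilde{\Omega}_{r/2})\le 2(1+C r)E_\varepsilon(u_\varepsilon,\Omega_{r/2})$ before letting $r\to0$. Your localized version is equivalent in spirit and would work, but note that it still requires knowing that the Jacobian of the reflected map converges in the collar, which is exactly what the paper's Step~1 of the lower bound establishes; you should not treat the half-annulus lower bound as a black box independent of the reflection analysis.
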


\begin{remark}
We obtain a corresponding theorem for functions
$u\in{}W_{N}^{1,2}(\Omega;\mathbb{R}^{2})$ since the function, $v$,
defined by
\begin{equation*}
    v(x)\coloneqq(-u_{2}(x),u_{1}(x))
    \in{}W_{T}^{1,2}(\Omega;\mathbb{R}^{2})
\end{equation*}
satisfies
\begin{equation*}
    E_{\varepsilon}(v)=E_{\varepsilon}(u),\hspace{10pt}
    Jv=Ju.
\end{equation*}
\end{remark}

The proof of Theorem \ref{ZerothOrderNormal} makes use of the
reflection technique from \cite{JMS} and the results of \cite{JS}.
However, since we make no restrictions on our domain beyond connectedness
and boundary regularity, and since we permit singularities to develop along the
boundary, we encounter a few additional obstacles.
\begin{enumerate}[label=(\alph*)]
    \item
        In the proof of compactness we needed to demonstrate that
        along each boundary component one half of the sum of the degrees of
        boundary vortices is an integer.
        This is a necessary restriction in order for the boundary vector field
        to be well defined.
        The natural integer to compare this quantity to is the degree near
        the boundary.
        However, since the Jacobian is only known to converge in a suitable
        weak norm, some care is needed in order to facilitate this comparison.
    \item
        In order to relate the degrees of vortices to topological restrictions
        we needed to demonstrate, in a suitable weak sense, convergence of the
        tangential portion of the current, $ju$, along the boundary to the
        signed curvature of the boundary components.
        In order to estimate error terms, which involved the modulus of our
        sequence of functions, we needed to use an adapted slicing
        argument similar to the one found in \cite{JS} to estimate the size
        of sets where the modulus was not close to $1$.
    \item
        In order to find a suitable sequence for the
        proof of the upper bound
        we need to construct, for our setting, the canonical harmonic map.
        Unfortunately, since we permit boundary singularities the construction
        from \cite{BBH} is not directly applicable.
        However, after some adapting, their construction extends to our
        setting.
        Note that this obstacle has been encountered in other papers, for
        instance see \cite{IgKur1}, but the techniques used there are not
        suitable for our setting since we do not impose restrictions on our
        domain beyond connectedness and boundary regularity.
\end{enumerate}

        We note that while there have been many generalizations of the interior convergence
        results for the Ginzburg-Landau energy $E_{\varepsilon}(u)$ including \cite{AlBaOr} for domains in every dimension,
        \cite{IgJ} for two-dimensional compact manifolds,
        and \cite{ABM} for convergence for a modified functional, there
        have not been many results in the Ginzburg-Landau literature
        addressing convergence up to the boundary.
        
        A couple of results that did consider convergence of the Ginzburg-Landau energy $E_{\varepsilon}(u)$ up to the boundary
        were \cite{JMS} and \cite{AlPo}.
        Through a counterexample presented in \cite{JMS} one can see the
        importance of boundary conditions on the functions considered.
        In particular, in the absence of boundary conditions the convergence
        of the Jacobian from \cite{JS} cannot be extended up to the boundary.
        On the other hand, the paper of \cite{AlPo} shows that under the
        assumption of \emph{full Dirichlet} boundary conditions one can obtain
        up to the boundary convergence.
        In particular, \cite{AlPo} goes on to consider first order
        $\Gamma$-convergence of the Ginzburg-Landau functional. A related problem was studied in \cite{IgKur1} in a different regime where boundary vortices are energetically preferable than interior ones and they introduce the notion of global jacobian to obtain a first order
        $\Gamma$-convergence of their functional.

We provide a brief overview of the organization of the paper and the content
of each section.
In Section \ref{Background} we outline notation to be used and some preliminary
concepts.
Here, we provide extended discussions about concepts and notation that will be
used throughout the document.
In particular, we provide exposition regarding a particular tangent-normal
coordinate system that we make use of.
In Section \ref{Lemmas} we provide a number of lemmas needed to prove Theorem
\ref{ZerothOrderNormal}.
In Section \ref{Goal} we provide a proof of Theorem \ref{ZerothOrderNormal}
in three subsections.
Each subsection is dedicated to one of compactness, the lower bound, and the
upper bound.
In addition, we provide an \hyperref[Appendix]{Appendix} for some basic topological results that
we need.

\section*{Acknowledgements}
We would like to thank Prof. Robert Jerrard for his very helpful and fruitful discussions.

\section{Preliminaries}\label{Background}
\subsection{Notation}

For $x=(x_{1},x_{2})\in\mathbb{R}^{2}$ we define
$x^{\perp}\coloneqq(-x_{2},x_{1})$.
For $(x_{1},x_{2}),(y_{1},y_{2})\in\mathbb{R}^{2}$ we also define
\begin{equation*}
    (x_{1},x_{2})\times(y_{1},y_{2})\coloneqq{}x_{1}y_{2}-x_{2}y_{1}.
\end{equation*}
We will use $\Omega\subseteq\mathbb{R}^{2}$ to denote an open bounded set whose
boundary is
of $C^{2,1}$ regularity.
In addition, we will assume that $\partial\Omega$ has $b+1$ connected
components, for $b\ge0$, denoted $(\partial\Omega)_{i}$ for
$i=0,1,\ldots,b$.
We assume that these components are indexed so that $(\partial\Omega)_{0}$
coincides with the outermost boundary component and
$(\partial\Omega)_{i}$ coincides with an interior boundary component for
$i=1,2,\ldots,b$.
For a function $u\colon\Omega\to\mathbb{R}^{2}$,
with well-defined trace, we let $u_{T}$, $u_{N}$ denote, respectively,
the \emph{tangential part} of $u$ along
$\partial\Omega$ and the \emph{normal part} of $u$ along $\partial\Omega$.
These are defined, respectively, by
\begin{equation*}
    u_{T}\coloneqq{}u\cdot\tau,
    \hspace{15pt}u_{N}\coloneqq{}u\vert_{\partial\Omega}-u_{T}
\end{equation*}
where $\tau$ is a locally defined unit tangent vector.
For $\Omega$ as above we introduce the function spaces
\begin{align*}
    W_{T}^{1,2}(\Omega;\mathbb{R}^{2})
    &\coloneqq\biggl\{u\in{}W^{1,2}(\Omega;\mathbb{R}^{2}):u_{T}=0\biggr\},
    \\
    W_{N}^{1,2}(\Omega;\mathbb{R}^{2})
    &\coloneqq\biggl\{u\in{}W^{1,2}(\Omega;\mathbb{R}^{2}):u_{N}=0\biggr\}.
\end{align*}
\noindent{}For appropriate functions $u\colon\Omega\to\mathbb{R}^{2}$ we let
\begin{equation*}
    \nabla\times{}u\coloneqq\frac{\partial{}u^{2}}{\partial{}x_{1}}
    -\frac{\partial{}u^{1}}{\partial{}x_{2}}.
\end{equation*}
In addition, for appropriate functions $\varphi\colon\Omega\to\mathbb{R}$ we let
\begin{equation*}
    \nabla^{\perp}\varphi\coloneqq(\nabla\varphi)^{\perp}.
\end{equation*}

\noindent{}For $\varepsilon>0$ we let the symbol $E_{\varepsilon}(u,\Omega)$
denote the following energy
\begin{equation*}
E_{\varepsilon}(u,\Omega)\coloneqq\int_{\Omega}\!{}e_{\varepsilon}(u),
\hspace{15pt}
e_{\varepsilon}(u)\coloneqq\frac{1}{2}\bigl|\nabla{}u\bigr|^{2}
+\frac{1}{4\varepsilon^{2}}\bigl(|u|^{2}-1\bigr)^{2}.
\end{equation*}
We will also use this notation for more general measurable sets
$A\subseteq\mathbb{R}^{2}$.
We will consider this energy over $W_{T}^{1,2}(\Omega;\mathbb{R}^{2})$ and
$W_{N}^{1,2}(\Omega;\mathbb{R}^{2})$.
We let $C(\Omega)$ denote the space of continuous functions on
$\Omega$ into $\mathbb{R}$ and we pair this with
$\left\|\,\cdot\,\right\|_{L^{\infty}(\Omega)}$.
Next, we let, for $0<\alpha\le1$ and $\varphi\colon\Omega\to\mathbb{R}$,
$[\varphi]_{\alpha}$ denote
\begin{equation*}
    [\varphi]_{\alpha}\coloneqq\sup_{\substack{x,y\in\Omega\\{}x\neq{}y}}
    \biggl\{\frac{|\varphi(x)-\varphi(y)|}{|x-y|^{\alpha}}\bigg\}
\end{equation*}
and we let $C^{0,\alpha}(\Omega)$ denote
\begin{equation*}
    C^{0,\alpha}(\Omega)\coloneqq
    \{\varphi\in{}C(\Omega):[\varphi]_{\alpha}<\infty\}.
\end{equation*}
We pair $C^{0,\alpha}(\Omega)$, for $0<\alpha\le1$, with the norm
\begin{equation*}
    \left\|\,\cdot\,\right\|_{C^{0,\alpha}(\Omega)}\coloneqq
    \max\bigl\{\left\|\,\cdot\,\right\|_{L^{\infty}(\Omega)},
    [\,\cdot\,]_{\alpha}\bigr\}.
\end{equation*}
When we want to refer to functions in $C^{0,\alpha}(\Omega)$
which have compact
support, we use the notation $C_{c}^{0,\alpha}(\Omega)$ for the space
and
$\left\|\,\cdot\,\right\|_{C_{c}^{0,\alpha}(\Omega)}$
for the norm
\begin{equation}\label{CompactSupportNorm}
    \left\|\,\cdot\,\right\|_{C_{c}^{0,\alpha}(\Omega)}
    \coloneqq{}[\,\cdot\,]_{\alpha}.
\end{equation}
Since we will be concerned with boundary behaviour we let, for $r>0$,
$B_{r,+}(0)$ denote
\begin{equation*}
    B_{r,+}(0)\coloneqq\{(x_{1},x_{2})\in\mathbb{R}^{2}:|(x_{1},x_{2})|<r,
    \hspace{5pt}x_{2}>0\}.
\end{equation*}
For $y\in\mathbb{R}$ we also use the notation
\begin{equation*}
    B_{r,+}(y,0)\coloneqq{}(y,0)+B_{r,+}(0).
\end{equation*}
We also introduce, for $0<\alpha\le1$ and $r>0$, the space of functions
\begin{equation*}
    \mathcal{A}_{\alpha,r}\coloneqq
    \bigl\{\varphi\in{}C^{0,\alpha}(B_{r,+}(0)):
    \substack{\varphi\equiv0\text{ in a neighbourhood}\\
    \text{of }\partial{}B_{r,+}(0)\cap\{x_{2}>0\}}\bigr\}
\end{equation*}
which we equip with the norm, $\left\|\,\cdot\,\right\|_{\mathcal{A}_{\alpha,r}}$
defined to be the same as \eqref{CompactSupportNorm}.
For $0<\alpha\le1$ we let $(C^{0,\alpha}(\Omega))^{*}$
be the dual space to $C^{0,\alpha}(\Omega)$ paired with the norm
\begin{equation*}
    \left\|\mu\right\|_{(C^{0,\alpha}(\Omega))^{*}}
    \coloneqq\sup_{\substack{\varphi\in{}C^{0,\alpha}(\Omega)\\
    \left\|\varphi\right\|_{C^{0,\alpha}(\Omega)}\le1}}
    \big\{\bigl<\mu,\varphi\bigr>\bigr\}
\end{equation*}
where $\bigl<\cdot,\cdot\bigr>$ denotes a duality pairing.
For each of $C_{c}^{0,\alpha}(\Omega)$, $\mathcal{A}_{\alpha,r}$, and
$C(\Omega)$ we define respective dual spaces.\\

Given a function $u\in{}W^{1,2}(\Omega;\mathbb{R}^{2})$ we define the
\emph{Jacobian}, $\star{}J(u)$, to be the measure in
$\bigl(C^{0,\alpha}(\Omega)\bigr)^{*}$,
for $0<\alpha\le1$, defined by
\begin{equation*}
    \bigl<\star{}J(u),\varphi\bigr>
    \coloneqq\int_{\Omega}\!{}\varphi{}Ju
\end{equation*}
where $\varphi\in{}C^{0,\alpha}(\Omega)$ and
$Ju\coloneqq\det(\nabla{}u)$.
We use this definition of the Jacobian for $(C(\Omega))^{*}$ as well.
For $u\in{}W^{1,2}(\Omega;\mathbb{R}^{2})$ we also let $ju$ denote
\begin{equation*}
    ju\coloneqq\biggl(u\times{}\frac{\partial{}u}{\partial{}x_{1}},
    u\times{}\frac{\partial{}u}{\partial{}x_{2}}\biggr).
\end{equation*}
Note that since $Ju=\frac{1}{2}\nabla\times{}ju$ then, for
$\varphi\in{}C^{0,1}(\Omega)$, integrating by parts gives:
\begin{equation*}
    \int_{\Omega}\!{}\varphi{}Ju
    =-\frac{1}{2}\int_{\Omega}\!{}\nabla^{\perp}\varphi\cdot{}ju
    +\frac{1}{2}\int_{\partial\Omega}\!{}(ju\cdot\tau)\varphi
\end{equation*}
where $\tau(x)$ denotes, for each $x\in\partial\Omega$, the unit tangent vector at $x$
such that $\{\mathbf{n}(x),\tau(x)\}$ is a positively oriented basis where $\mathbf{n}(x)$
is the outward unit normal to $\partial\Omega$ at $x$.
We provide a detailed discussion of notation for tangential and normal unit vectors in
Section \ref{TangentNormalCoordinates}.

\subsection{Coordinates}\label{TangentNormalCoordinates}
\par{}Here we will construct a coordinate system for a neighbourhood of
$\partial\Omega$ which uses the unit tangent and unit inward normal.
Our construction will follow the ideas found in Section $2$ of \cite{BaTi}.
See also \cite{RaIg} for similar ideas.\\

To construct the desired coordinate system we will need to consider each of the
connected components of $\partial\Omega$ separately.
We parametrize each $(\partial\Omega)_{i}$, for $i=0,1,\ldots,b$, by its
arclength, $L_{i}$, using a $C^{2,1}$ curve
$\gamma_{i}(y_{1})=(\gamma_{i,1}(y_{1}),\gamma_{i,2}(y_{1}))$
where $\gamma_{i}\colon\mathbb{R}\slash{}L_{i}\mathbb{Z}\to(\partial\Omega)_{i}$.
We define the unit tangent and unit normal vectors, $\tau_{i}$ and
$\nu_{i}$, respectively, as
\begin{align*}
    \tau_{i}(y_{1})&\coloneqq(\gamma_{i,1}'(y_{1}),\gamma_{i,2}'(y_{1})),
    \\
    \nu_{i}(y_{1})&\coloneqq(-\gamma_{i,2}'(y_{1}),\gamma_{i,1}'(y_{1})).
\end{align*}
We also let, for $i=0,1,\ldots,b$,
$\mathbf{n}_{i}\coloneqq-\nu_{i}$ denote the outward unit normal along
$(\partial\Omega)_{i}$.
By perhaps reversing the orientation on $\gamma_{i}$ we may ensure that
$\nu_{i}$ coincides with the inward unit normal to $(\partial\Omega)_{i}$ for each
$i=0,1,\ldots,b$.
Notice that this choice of orientation matches the induced orientation from $\Omega$ which is clockwise
on $(\partial\Omega)_{i}$ for $i=1,2,\ldots,b$ and counterclockwise on $(\partial\Omega)_{0}$.
\begin{center}
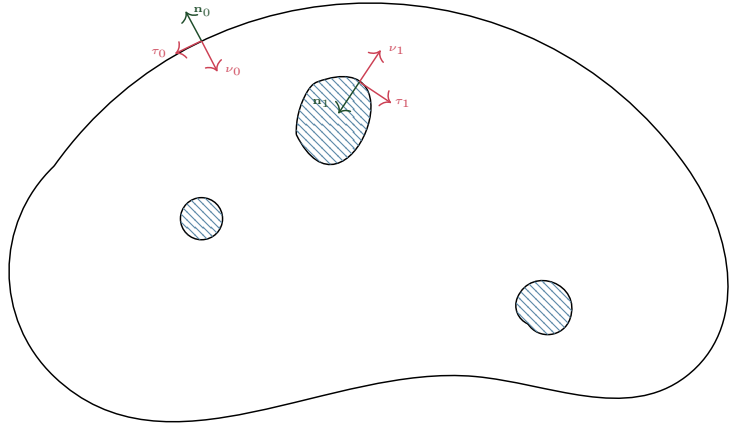
\begin{figure}
\begin{center}
\scalebox{1.4}{
\begin{tikzpicture}[use Hobby shortcut]
\draw[rotate=-90,scale=2]
(3,0) .. +(1,0) .. +(1,2) .. +(1,3) .. +(0,3) .. (3,0); 
\draw[->,brickred](1.4,-4.81) .. (1.55,-5.1);
\draw[->,brickred](1.4,-4.81) .. (1.15,-4.93045454545);
\draw[->,calpolypomonagreen](1.4,-4.81) .. (1.25,-4.53133333333);
\draw(1.55,-5.1)node[right,brickred,scale=0.6]{\tiny $\nu_{0}$};
\draw(1.15,-4.93045454545)node[left,brickred,scale=0.6]{\tiny $\tau_{0}$};
\draw(1.25,-4.53133333333)node[right,calpolypomonagreen,scale=0.6]{\tiny $\mathbf{n}_{0}$};
\begin{scope}
\clip[draw] (2.3,-5.7) .. (2.5,-5.2) .. (2.4,-5.3) .. (2.3,-5.7);
\fill[pattern=north west lines, pattern color=airforceblue](2.5,-5.5) circle(1);
\end{scope}
\draw[->,brickred](2.9,-5.2) .. (3.1,-4.9);
\draw[->,brickred](2.9,-5.2) .. (3.2,-5.4);
\draw[->,calpolypomonagreen](2.9,-5.2) .. (2.7,-5.5);
\draw(3.1,-4.9)node[right,brickred,scale=0.6]{\tiny $\nu_{1}$};
\draw(3.16,-5.4)node[right,brickred,scale=0.6]{\tiny $\tau_{1}$};
\draw(2.7,-5.4)node[left,calpolypomonagreen,scale=0.6]{\tiny $\mathbf{n}_{1}$};
\begin{scope}
\clip[draw] (4.5,-7.5) .. (4.9,-7.45) .. (4.4,-7.25) .. (4.5,-7.5);
\fill[pattern=north west lines, pattern color=airforceblue](4.5,-7.5) circle(1);
\end{scope}
\begin{scope}
\clip[draw] (1.4,-6.5) coordinate (M) circle[radius=2mm];
\fill[pattern=north west lines, pattern color=airforceblue](1.4,-6.5) circle(1);
\end{scope}
\end{tikzpicture}
}
\caption{Depiction of tangent and normal vectors.}
\end{center}
\end{figure}
\end{center}
Recall Chapter $2.2$ of \cite{Pr} gives
\begin{equation}\label{DerivativeIdentities}
    \tau_{i}'(y_{1})=\kappa_{i}(y_{1})\nu_{i}(y_{1}),\hspace{15pt}
    \nu_{i}'(y_{1})=-\kappa_{i}(y_{1})\tau_{i}(y_{1}),
\end{equation}
where $\kappa_{i}\colon\mathbb{R}\slash{}L_{i}\mathbb{Z}\to\mathbb{R}$, for
$i=0,1,\ldots,b$ is the signed curvature of $(\partial\Omega)_{i}$.
Notice that $\kappa_{i}\in{}C^{0,1}(\mathbb{R}\slash{}L_{i}\mathbb{Z})$
since $\partial\Omega$ is $C^{2,1}$.

Next we will define a coordinate chart on $\Omega$ which extends to a larger domain.
Suppose $0<r\le{}r_{0}<\frac{1}{2}\text{inj}(\partial\Omega)$, where
$\text{inj}(\partial\Omega)$ denotes the injectivity radius of
$\partial\Omega$, and set
\begin{align*}
    \Omega_{i,r}&\coloneqq
    \{x\in\Omega:0<\text{dist}(x,(\partial\Omega)_{i})<r\},\\
    \overline{\Omega}_{i,r}&\coloneqq
    \{x\in\overline{\Omega}:0\le\text{dist}(x,(\partial\Omega)_{i})\le{}r\}.
\end{align*}
We define, for $i=0,1,\ldots,b$, $C^{1,1}$ maps
$X_{i}\colon(\mathbb{R}\slash{}L_{i}\mathbb{Z})\times(0,r_{0})\to
\Omega_{i,r_{0}}$
by
\begin{equation}\label{TangentNormal:Def}
    X_{i}(y_{1},y_{2})\coloneqq\gamma_{i}(y_{1})+y_{2}\nu_{i}(y_{1}).
\end{equation}
We observe that by \eqref{DerivativeIdentities} we have
\begin{equation}\label{PartialsOfX}
    \frac{\partial{}X_{i}}{\partial{}y_{1}}
    =\tau_{i}(y_{1})-y_{2}\kappa_{i}(y_{1})\tau_{i}(y_{1}),\hspace{15pt}
    \frac{\partial{}X_{i}}{\partial{}y_{2}}
    =\nu_{i}(y_{1}).
\end{equation}
From \eqref{PartialsOfX} we have
\begin{equation*}
    JX_{i}=1-y_{2}\kappa_{i}(y_{1}).
\end{equation*}
By perhaps shrinking $r_{0}$ we may ensure $JX_{i}$ is bounded away from zero on
$(\mathbb{R}\slash{}L_{i}\mathbb{Z})\times(0,r_{0})$.
Next we let
\begin{equation*}
    r_{1}\coloneqq\frac{1}{2}\min\Bigl\{\min_{i=0}^{b}\{L_{i}\},r_{0}\Bigr\}
\end{equation*}
and notice that for each $y\in(\mathbb{R}\slash{}L_{i}\mathbb{Z})$
we can define a map $i_{y}\colon{}B_{r_{1},+}(0)\to{}B_{r_{1},+}(y,0)$ as
\begin{equation*}
    i_{y}(z)\coloneqq{}(y,0)+z.
\end{equation*}
Since $(\mathbb{R}\slash{}L_{i}\mathbb{Z})\times[0,r_{1}\slash2]$
is compact for each $i=0,1,2,\ldots,b$ then we can find, for each $i=0,1,\ldots,b$,
finitely many points
$\{y_{i,j}\}_{j=1}^{N_{i}}\subseteq(\mathbb{R}\slash{}L_{i}\mathbb{Z})$
such that
\begin{equation*}
    (\mathbb{R}\slash{}L_{i}\mathbb{Z})\times[0,r_{1}\slash2]
    \subseteq\bigcup_{j=1}^{N_{i}}B_{r_{1},+}(y_{i,j},0).
\end{equation*}
Next we define, for each $i=0,1,\ldots,b$ and $j=1,2,\ldots,N_{i}$, the
$C^{1,1}$ map $\psi_{i,j}\colon{}B_{r_{1},+}(0)\to\mathcal{U}_{i,j}$ by
\begin{equation*}
    \psi_{i,j}\coloneqq{}X_{i}\circ{}i_{y_{i,j}}
\end{equation*}
where $\mathcal{U}_{i,j}\coloneqq{}X_{i}(B_{r_{1},+}(y_{i,j},0))$.
Notice that, for each $i=0,1,\ldots,b$ and $j=1,2,\ldots,N_{i}$,
$J\psi_{i,j}$ is Lipschitz and bounded away from zero by our choice of
$r_{0}$, $\psi_{i,j}$ is proper (i.e. preimages of compact sets are
compact), and $\mathcal{U}_{i,j}$ is simply
connected.
By Theorem $6.2.8$ of \cite{KrPa} or Theorem $B$ of \cite{Go} we conclude that
$\psi_{i,j}$ is invertible with differentiable inverse.
It follows from
\begin{equation*}
    D\psi_{i,j}^{-1}(x)=\bigl[D\psi_{i,j}(\psi_{i,j}^{-1}(x))\bigr]^{-1}
\end{equation*}
and that $J\psi_{i,j}^{-1}$ is bounded away from zero, that $D\psi_{i,j}^{-1}$ has
Lipschitz components.
For each $i=0,1,\ldots,b$ we now obtain a local chart for
$\overline{\Omega}_{i,\frac{r_{1}}{2}}\setminus(\partial\Omega)_{i}$ from
$\{(\mathcal{U}_{i,j},\psi_{i,j})\}_{j=1}^{N_{i}}$.
Collecting all of these local charts gives an atlas for
$\{x\in\Omega:\text{dist}(x,\partial\Omega)\le\frac{r_{1}}{2}\}$.
Observe that we may adjoin
$\mathcal{U}_{0,0}\coloneqq\{x\in\Omega:\text{dist}(x,\partial\Omega)
>\frac{r_{1}}{4}\}$
paired with the identity map $\psi_{0,0}$ to obtain an atlas for $\Omega$.
We also notice that since the parametrizing curves $\gamma_{i}$ were not
reparametrized in a way that distorts length, we retain that
$(\partial\Omega)_{i}$, for $i=0,1,\ldots,b$, is still described
by an arclength parametrized curve in $\mathcal{U}_{i,j}$.
We observe that this atlas extends to an atlas for $\overline{\Omega}$
with similar properties.

Next, we let $\widetilde{\Omega}_{i,r}$ denote, for $i=0,1,2,\ldots,b$ and
$0<r\le{}r_{0}$,
\begin{equation}\label{def:DomainiExt}
    \widetilde{\Omega}_{i,r}\coloneqq
    \{x\in\mathbb{R}^{2}:0\le\text{dist}(x,(\partial\Omega)_{i})<r\}.
\end{equation}
Notice that we can extend $X_{i}$, for $i=0,1,\ldots,b$, to a map
$\widetilde{X}_{i}\coloneqq(\mathbb{R}\slash{}L_{i})\times(-r_{0},r_{0})
\to\widetilde{\Omega}_{i,r_{0}}$ by the same definition as in
\eqref{TangentNormal:Def}.
This allows us to view each $\psi_{i,j}$, for $i=0,1,\ldots,b$ and
$j=1,2,\ldots,N_{i}$, as the restriction to $B_{r_{1},+}(0)$ of a map
$\widetilde{\psi}_{i,j}\colon{}B_{r_{1}}(0)\to\widetilde{\mathcal{U}}_{i,j}$,
where $\widetilde{\mathcal{U}}_{i,j}
\coloneqq\widetilde{X}_{i}(B_{r_{1}}(y_{i,j},0))$.
By definition of the extension of $\widetilde{X}_{i}$ and the chart
$\widetilde{\psi}_{i,j}$ we
have, for $(y_{1},s)\in\psi_{i,j}^{-1}(\mathcal{U}_{i,j})$, that
\begin{equation*}
    \text{dist}(\widetilde{\psi}_{i,j}(y_{1},-s),(\partial\Omega)_{i})
    =\text{dist}(\widetilde{\psi}_{i,j}(y_{1},s),(\partial\Omega)_{i}).
\end{equation*}
Let
$\widetilde{\mathcal{U}}_{0,0}
\coloneqq\{x\in\Omega:\text{dist}(x,\partial\Omega)>
\frac{r_{1}}{4}\}$.
We may pair $\widetilde{\mathcal{U}}_{0,0}$ with the identity map,
$\widetilde{\psi}_{0,0}(y)\coloneqq{}y$, in order to extend the atlas for
$\bigcup_{i=0}^{b}\widetilde{\Omega}_{i,\frac{r_{1}}{2}}$ to one for $\Omega$.
We let $\{\widetilde{\rho}_{i,j}\}_{i=0,1,\ldots,b}^{j=1,2,\ldots,N_{i}}\cup
\{\widetilde{\rho}_{0,0}\}$
be a smooth partition of unity subordinate to the cover
$\{\widetilde{\mathcal{U}}_{i,j}\}_{i=0,1,\ldots,b}^{j=1,2,\ldots,N_{i}}
\cup\{\widetilde{\mathcal{U}}_{0,0}\}$.
We observe that since, for each $i_{1}=0,1,\ldots,b$, 
$(\partial\Omega)_{i_{1}}\cap\widetilde{\mathcal{U}}_{i_{2},j_{2}}=\varnothing$
for $i_{1}\neq{}i_{2}$, $i_{2}=0,1,\ldots,b$ and $j_{2}=1,2,\ldots,N_{i_{2}}$
and since $\widetilde{\mathcal{U}}_{0,0}\subset\subset\Omega$ then
\begin{equation}\label{DisjointPartitionOfUnity}
    \sum_{j_{1}=1}^{N_{i_{1}}}\widetilde{\rho}_{i_{1},j_{1}}=1\hspace{15pt}
    \text{on }(\partial\Omega)_{i_{1}},\,i_{1}=0,1,\ldots,b.
\end{equation}
Next, we notice that, due to the choice of orientation about
$(\partial\Omega)_{i}$ for
$i=0,1,\ldots,b$, we have, by Lemma \ref{FilledInDomain}
and the Gauss-Bonnet theorem, that
\begin{equation}\label{eq:OrientedBoundary}
    \int_{(\partial\Omega)_{0}}\!{}\widetilde{\kappa}_{0}=2\pi,\hspace{15pt}
    \int_{(\partial\Omega)_{i}}\!{}\widetilde{\kappa}_{i}=-2\pi,
    \,\,\text{for }i=1,2,\ldots,b,
\end{equation}
where $\widetilde{\kappa}_{i}\coloneqq
\kappa_{i}\circ(\widetilde{\psi}_{i,j}^{-1})^{1}$.
By Corollary \ref{EulerCharacteristic} or the Gauss-Bonnet theorem we now
have
\begin{equation*}
    \int_{\partial\Omega}\!{}\widetilde{\kappa}=2\pi\chi_{Euler}(\Omega).
\end{equation*}
We introduce some additional notation for expressing and extending functions
$u_{\varepsilon}$ using the previously described coordinates on $\Omega$:
\begin{align}
    z_{i,j,\varepsilon}&\coloneqq{}u_{\varepsilon}\circ\psi_{i,j},
    \label{def:zRep}\\
    w_{i,j,\varepsilon}&\coloneqq{}
    z_{i,j,\varepsilon,\tau}e_{1}
    +z_{i,j,\varepsilon,\nu}e_{2},
    \label{def:wRep}
\end{align}
where $z_{i,j,\varepsilon,\tau}$ and $z_{i,j,\varepsilon,\nu}$ are,
respectively, the components of $z_{i,j,\varepsilon}$ in the $\tau$ and $\nu$
directions.
Note that these functions are defined over $B_{r_{1},+}(0)$.

\section{Lemmas}\label{Lemmas}
\subsection{Extension for \texorpdfstring{$B_{r,+}(0)$}{}}
\begin{lemma}\label{Extension}
    Suppose $u\in{}W_{T}^{1,2}(B_{r,+}(0);\mathbb{R}^{2})$ and
    $\varphi\in{}\mathcal{A}_{\alpha,r}$ where $0<\alpha\le1$ and $r>0$.
    Then there is an extension $\tilde{u}\in{}W^{1,2}(B_{r}(0);\mathbb{R}^{2})$ of
    $u$ and an extension $\tilde{\varphi}\in{}C_{c}^{0,\alpha}(B_{r}(0))$
    of $\varphi$ such that
    \begin{equation}\label{eq:JacobianReflection}
        \bigl<\star{}J(\tilde{u}),\tilde{\varphi}\bigr>
        =2\bigl<\star{}J(u),\varphi\bigr>
    \end{equation}
    and
    \begin{align}
        \lVert\tilde{u}\rVert_{W^{1,2}(B_{r}(0);\mathbb{R}^{2})}&=
        2\lVert{}u\rVert_{W^{1,2}(B_{r,+}(0);\mathbb{R}^{2})},
        \label{SobolevDouble}\\
        \lVert\tilde{\varphi}\rVert_{C_{c}^{0,\alpha}(B_{r}(0))}
        &=\lVert\varphi\rVert_{\mathcal{A}_{\alpha,r}},
        \label{HolderExtensionNorm}\\
        E_{\varepsilon}(\tilde{u},B_{r}(0))&=2E_{\varepsilon}(u,B_{r,+}(0)).
        \label{eq:EnergyDouble}
    \end{align}
\end{lemma}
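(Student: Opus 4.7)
The plan is the standard \emph{odd/even reflection} across the flat boundary $\{x_2=0\}$, chosen so that the tangential boundary condition makes the reflected field Sobolev. For $(x_1,x_2)\in B_{r,+}(0)$ I set
\begin{equation*}
    \tilde{u}(x_1,x_2)\coloneqq u(x_1,x_2),\qquad
    \tilde{u}(x_1,-x_2)\coloneqq\bigl(-u^{1}(x_1,x_2),\,u^{2}(x_1,x_2)\bigr),
\end{equation*}
and extend $\varphi$ \emph{evenly}: $\tilde{\varphi}(x_1,-x_2)\coloneqq\varphi(x_1,x_2)$. First I would verify $\tilde{u}\in W^{1,2}(B_r(0);\mathbb{R}^2)$. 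Since on the flat piece $\{x_2=0\}$ the tangent is $e_1$, the hypothesis $u\in W_{T}^{1,2}$ translates to $u^{1}|_{x_2=0}=0$ in the trace sense, which is exactly the compatibility needed to glue $-u^{1}(x_1,-x_2)$ to $u^{1}(x_1,x_2)$ in $W^{1,2}$; the even reflection of $u^{2}$ is automatically $W^{1,2}$. Extending $\varphi$ by zero outside $\overline{B_{r,+}(0)}$ beforehand (possible because $\varphi\in\mathcal{A}_{\alpha,r}$ vanishes in a neighbourhood of $\partial B_{r,+}(0)\cap\{x_2>0\}$) ensures $\tilde{\varphi}\in C^{0,\alpha}_c(B_r(0))$.

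Next I would check the three numerical equalities. For \eqref{SobolevDouble} and \eqref{eq:EnergyDouble}, direct substitution shows $|\nabla\tilde{u}(x_1,-x_2)|^{2}=|\nabla u(x_1,x_2)|^{2}$ and $|\tilde{u}|^{2}$ is even across $\{x_2=0\}$, so both $W^{1,2}$ norm squared and $E_{\varepsilon}$ pick up an exact factor of $2$. For \eqref{HolderExtensionNorm}, the key point is that for any pair with $x_2>0>y_2$ we have
\begin{equation*}
    \bigl|(x_1,x_2)-(y_1,-y_2)\bigr|^{2}=(x_1-y_1)^{2}+(x_2+y_2)^{2}\le(x_1-y_1)^{2}+(x_2-y_2)^{2}
\end{equation*}
since $x_2 y_2<0$; therefore the Hölder ratio for $\tilde{\varphi}$ across the axis is dominated by the Hölder ratio of $\varphi$ on the upper half, giving $[\tilde{\varphi}]_{\alpha}=[\varphi]_{\alpha}$, i.e.\ \eqref{HolderExtensionNorm}.

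For the Jacobian identity \eqref{eq:JacobianReflection}, I would compute $J\tilde{u}$ on the lower half: with the chosen odd-even reflection, two of the four partials flip sign, giving
\begin{equation*}
    J\tilde{u}(x_1,-x_2)=(-\partial_{1}u^{1})(-\partial_{2}u^{2})-(\partial_{2}u^{1})(\partial_{1}u^{2})=Ju(x_1,x_2),
\end{equation*}
so $J\tilde{u}$ is even across $\{x_2=0\}$. Since $\tilde{\varphi}$ is also even, a change of variables yields
\begin{equation*}
    \bigl<\star J(\tilde{u}),\tilde{\varphi}\bigr>=\int_{B_r(0)}\tilde{\varphi}\,J\tilde{u}=2\int_{B_{r,+}(0)}\varphi\,Ju=2\bigl<\star J(u),\varphi\bigr>,
\end{equation*}
which is \eqref{eq:JacobianReflection}.

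The only genuinely delicate point, and the one I would treat carefully, is the $W^{1,2}$ statement: if the trace $u^{1}|_{x_2=0}$ were not zero, the odd reflection of $u^{1}$ would produce a jump along $\{x_2=0\}$ and fail to be in $W^{1,2}$. Everything else is bookkeeping, and the factor-of-two identities are exact because the reflection is an isometry of the measure.
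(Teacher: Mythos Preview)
Your proposal is correct and follows essentially the same approach as the paper: odd reflection of $u^{1}$ (permitted by the vanishing tangential trace), even reflection of $u^{2}$, and even reflection of $\varphi$, after which the evenness of $J\tilde{u}$ and $\tilde{\varphi}$ gives \eqref{eq:JacobianReflection} by a change of variables. Your treatment is in fact slightly more detailed than the paper's in two places---the $W^{1,2}$ gluing via the trace condition and the explicit inequality $(x_{2}+y_{2})^{2}\le(x_{2}-y_{2})^{2}$ for the H\"older seminorm across the axis---while the paper simply asserts these.
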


\begin{proof}
Since $u_{T}=0$ then if $u=u^{1}e_{1}+u^{2}e_{2}$
then $u^{1}(x_{1},0)=0$ for $|x_{1}|<r$.
We define $\tilde{u}\colon{}B_{r}(0)\to\mathbb{R}^{2}$ by
\begin{equation*}
    \tilde{u}(x_{1},x_{2})\coloneqq
    \begin{cases}
        u^{1}(x_{1},x_{2})e_{1}+u^{2}(x_{1},x_{2})e_{2},& \text{if }|x_{1}|<r
        \text{ and }x_{2}\ge0\\
        -u^{1}(x_{1},-x_{2})e_{1}+u^{2}(x_{1},-x_{2})e_{2},&
        \text{if }|x_{1}|<r\text{ and }x_{2}<0.
    \end{cases}
\end{equation*}
Note that we are
able to extend $u^{1}$ as an odd function because it is zero
along $\bigl\{(x_{1},0):|x_{1}|\le{}r\bigr\}$.
We note that $\tilde{u}\in{}W^{1,2}(B_{r}(0);\mathbb{R}^{2})$ and
that both \eqref{SobolevDouble} and \eqref{eq:EnergyDouble}
are satisfied.
Next we compute the Jacobian of $\tilde{u}$.
Doing this, we see that
\begin{equation}\label{ExtendedJacobian}
    J\tilde{u}(x_{1},x_{2})=
    \begin{cases}
        Ju(x_{1},x_{2}),& \text{if }|x_{1}|<r\text{ and }x_{2}\ge0\\
        Ju(x_{1},-x_{2}),& \text{if }|x_{1}|<r\text{ and }x_{2}<0.
    \end{cases}
\end{equation}
Next, consider a function $\varphi\in\mathcal{A}_{\alpha,r}$, where 
$\alpha\in(0,1]$ and $r>0$.
We extend $\varphi$ to $B_{r}(0)$ in the following way:
\begin{equation}\label{ExtendedTestFunction}
    \tilde{\varphi}(x_{1},x_{2})\coloneqq\varphi(x_{1},|x_{2}|)=
    \begin{cases}
        \varphi(x_{1},x_{2}),& \text{if }|x_{1}|<r\text{ and }x_{2}\ge0\\
        \varphi(x_{1},-x_{2}),& \text{if }|x_{1}|<r\text{ and }x_{2}<0
    \end{cases}
\end{equation}
and note that \eqref{HolderExtensionNorm} holds.
We also note that $\tilde{\varphi}\in{}C_{c}^{0,\alpha}(B_{r}(0))$
since $\varphi\equiv0$ in a neighbourhood of
$\partial{}B_{r,+}(0)\cap\{x_{2}>0\}$.
Finally, \eqref{eq:JacobianReflection} now follows from
\eqref{ExtendedJacobian}, \eqref{ExtendedTestFunction},
and a change of variables.
\end{proof}

\begin{lemma}\label{ExtensionCompactness}
    Suppose
    $\{u_{\varepsilon}\}_{\varepsilon\in(0,1]}\subseteq{}W_{T}^{1,2}(B_{r,+}(0);
    \mathbb{R}^{2})$ satisfies
    \begin{equation*}
        E_{\varepsilon}(u_{\varepsilon},B_{r,+}(0))
        \le{}C\mathopen{}\left|\log(\varepsilon)\right|\mathclose{}.
    \end{equation*}
    Then there exists a subsequence $\{\varepsilon_{k}\}_{k=1}^{\infty}$,
    non-zero integers $d_{i}$ and $d_{j}$ for
    $i=1,2,\ldots,{}M_{1}$ and $j=1,2,\ldots,M_{2}$ such that
    for all $0<\alpha\le1$ we have
    \begin{equation*}
        \Biggl\lVert\star{}J(u_{\varepsilon_{k}})
        -\pi\sum_{i=1}^{M_{1}}d_{i}\delta_{x_{i}}
        -\frac{\pi}{2}\sum_{j=1}^{M_{2}}d_{j}\delta_{x_{j}}
        \Biggr\rVert_{\mathcal{A}_{\alpha,r}^{*}}
        \longrightarrow{}0^{+},
    \end{equation*}
    where $x_{i}\in{}B_{r,+}(0)$ for $i=1,2,\ldots,M_{1}$ and
    $x_{j}\in\{(x,0):|x|<r\}$ for
    $j=1,2,\ldots,M_{2}$.
\end{lemma}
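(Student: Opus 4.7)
The plan is to reduce the problem to the interior compactness result of \cite{JS} by applying the reflection technique of Lemma \ref{Extension}. For each $\varepsilon$, let $\tilde u_\varepsilon \in W^{1,2}(B_r(0);\mathbb{R}^2)$ be the extension of $u_\varepsilon$ constructed there. By \eqref{eq:EnergyDouble}, $E_\varepsilon(\tilde u_\varepsilon, B_r(0)) \le 2C|\log\varepsilon|$, so the hypotheses of the Jerrard--Soner compactness theorem are met on $B_r(0)$. Passing to a subsequence, we obtain a signed measure
\begin{equation*}
    \tilde J_* = \pi \sum_{\ell=1}^{N} e_\ell \delta_{y_\ell},
    \qquad e_\ell \in \mathbb{Z}\setminus\{0\},\ y_\ell \in B_r(0),
\end{equation*}
such that $\star J(\tilde u_{\varepsilon_k}) \to \tilde J_*$ in $(C^{0,\alpha}_c(B_r(0)))^*$ for every $0 < \alpha \le 1$.

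The next step is to exploit the symmetry of the extension. Formula \eqref{ExtendedJacobian} shows that $J\tilde u_\varepsilon$ is invariant under the reflection $R(x_1,x_2) = (x_1,-x_2)$, and hence so is $\tilde J_*$. It follows that the collection $\{y_\ell\}$ is $R$-invariant, and that $R$-paired vortices carry equal degrees. I would then group the vortices as: vortices $a_1,\dots,a_{M_1}$ in the open upper half-disc $B_{r,+}(0)$, their reflections $R(a_i)$ with the same degrees $d_i$, and vortices $c_1,\dots,c_{M_2}$ on the diameter $\{x_2 = 0\}\cap B_r(0)$ with degrees $\tilde d_j$. This yields the decomposition
\begin{equation*}
    \tilde J_* = \pi \sum_{i=1}^{M_1} d_i\bigl(\delta_{a_i} + \delta_{R(a_i)}\bigr) + \pi \sum_{j=1}^{M_2} \tilde d_j \delta_{c_j}.
\end{equation*}

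To translate back to the original Jacobian, I would pair with an arbitrary $\varphi \in \mathcal{A}_{\alpha,r}$ of norm at most $1$ and apply \eqref{eq:JacobianReflection} together with the isometric character \eqref{HolderExtensionNorm} of the extension $\varphi \mapsto \tilde\varphi$. Since $\tilde\varphi(a_i) = \tilde\varphi(R(a_i)) = \varphi(a_i)$ and $\tilde\varphi(c_j) = \varphi(c_j)$, the identity
\begin{equation*}
    \bigl\langle \star J(u_{\varepsilon_k}),\varphi\bigr\rangle
    = \tfrac{1}{2}\bigl\langle \star J(\tilde u_{\varepsilon_k}),\tilde\varphi\bigr\rangle
    \longrightarrow
    \pi\sum_{i=1}^{M_1} d_i\,\varphi(a_i) + \tfrac{\pi}{2}\sum_{j=1}^{M_2}\tilde d_j\,\varphi(c_j)
\end{equation*}
holds, with the error controlled by $\tfrac{1}{2}\lVert \star J(\tilde u_{\varepsilon_k}) - \tilde J_*\rVert_{(C^{0,\alpha}_c(B_r(0)))^*}$. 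Taking the supremum over such $\varphi$ gives exactly the claimed convergence in $\mathcal{A}_{\alpha,r}^*$, with interior coefficients $\pi d_i$ and boundary coefficients $\tfrac{\pi}{2}\tilde d_j$.

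The main obstacle is bookkeeping rather than analysis: one must correctly track the factor of two that arises from reflecting both the map and the test function, verify that the pairing of $R$-related vortices does not double-count the boundary ones, and ensure that the degrees coming from the Jerrard--Soner limit on $B_r(0)$ are transferred unchanged to the half-ball. The isometry \eqref{HolderExtensionNorm} is what guarantees that operator-norm convergence in $(C^{0,\alpha}_c(B_r(0)))^*$ passes to operator-norm convergence in $\mathcal{A}_{\alpha,r}^*$; without it, only weak-$*$ convergence against individual test functions would be immediate.
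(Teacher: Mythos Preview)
Your proof is correct and follows the same reflect--apply--\cite{JS}--use--symmetry strategy as the paper. The paper's version is slightly less direct: it first applies the Jerrard--Soner theorem on the half-disc $B_{r,+}(0)$ to pin down the interior vortices, then again on the full disc $B_r(0)$, and matches the two; you skip the first application by deducing the symmetric structure of $\tilde J_*$ directly from the $R$-evenness of $J\tilde u_\varepsilon$ recorded in \eqref{ExtendedJacobian}, which is a clean simplification.
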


\begin{proof}
It follows from Theorem $3.1$ of \cite{JS} that there is a subsequence
$\{u_{\varepsilon_{k}}\}_{k=1}^{\infty}\subseteq{}
W_{T}^{1,2}(B_{r,+}(0);\mathbb{R}^{2})
$, points $\{x_{i}\}_{i=1}^{M_{1}}\subseteq{}B_{r,+}(0)$, and non-zero integers
$\{d_{i}\}_{i=1}^{M_{1}}$ such that
\begin{equation}\label{HalfConvergence}
    \Biggl\lVert\star{}J(u_{\varepsilon_{k}})-
    \pi\sum_{i=1}^{M_{1}}d_{i}\delta_{x_{i}}\Biggr\rVert_
    {(C_{c}^{0,\alpha}(B_{r,+}(0)))^{*}}\longrightarrow0^{+},
\end{equation}
for each $0<\alpha\le1$ as $k\to\infty$.
We now demonstrate that we can extend this convergence, after an appropriate
modification to the limiting measure, to hold for $\mathcal{A}_{\alpha,r}^{*}$, the dual of $\mathcal{A}_{\alpha,r}$.
By Lemma \ref{Extension} we may extend
each $u_{\varepsilon_{k}}$ to a function $\tilde{u}_{\varepsilon_{k}}$
defined over $B_{r}(0)$.
We also have by \eqref{eq:EnergyDouble} of Lemma \ref{Extension} that
\begin{equation*}
    E_{\varepsilon_{k}}(\tilde{u}_{\varepsilon_{k}},B_{r}(0))
    \le{}2C\mathopen{}\left|\log(\varepsilon_{k})\right|\mathclose{}
\end{equation*}
for each $k\in\mathbb{N}$.
Applying Theorem $3.1$ of \cite{JS} again we obtain a further subsequence
$\{\tilde{u}_{\varepsilon_{k}}\}_{k=1}^{\infty}$, sequence with $\varepsilon_{k}$,
points $\{\tilde{x}_{i}\}_{i=1}^{\tilde{M}}\subseteq{}B_{r}(0)$,
and non-zero integers $\{\tilde{d}_{i}\}_{i=1}^{\tilde{M}}$ such that
\begin{equation}\label{TotalConvergence}
    \Biggl\lVert\star{}J(\tilde{u}_{\varepsilon_{k}})
    -\pi\sum_{i=1}^{\tilde{M}}\tilde{d}_{i}\delta_{\tilde{x}_{i}}
    \Biggr\rVert_{(C_{c}^{0,\alpha}(B_{r}(0)))^{*}}\longrightarrow{}0^{+}
\end{equation}
for each $0<\alpha\le1$ as $k\to\infty$.
We let
$J_{0}\coloneqq\pi\sum\limits_{i=1}^{\tilde{M}}\tilde{d}_{i}\delta_{\tilde{x}_{i}}$.
Observe that we may decompose $J_{0}$ as
\begin{equation*}
    J_{0}
    =\pi\sum_{i=1}^{\tilde{M}_{1}}\tilde{d}_{i}\delta_{\tilde{x}_{i}}
    +\pi\sum_{i=1}^{\tilde{M}_{2}}\tilde{d}_{i}\delta_{\tilde{x}_{i}}
    +\pi\sum_{i=1}^{\tilde{M}_{3}}\tilde{d}_{i}\delta_{\tilde{x}_{i}}
\end{equation*}
where
\begin{align*}
    \{\tilde{x}_{i}\}_{i=1}^{\tilde{M}_{1}}&\subseteq{}B_{r,+}(0),\\
    \{\tilde{x}_{i}\}_{i=1}^{\tilde{M}_{2}}&\subseteq{}
    \{(x,0):|x|<r\},\\
    \{\tilde{x}_{i}\}_{i=1}^{\tilde{M}_{3}}&\subseteq{}
    B_{r}(0)\setminus\overline{B_{r,+}(0)}
\end{align*}
and $\tilde{M}=\tilde{M}_{1}+\tilde{M}_{2}+\tilde{M}_{3}$.
Combining \eqref{HalfConvergence} and \eqref{TotalConvergence} we find that
$\tilde{M}_{1}=M_{1}$, as well as that $\tilde{x}_{i}=x_{i}$ and
$\tilde{d}_{i}=d_{i}$ for $i=1,2,\ldots,M_{1}$.
In addition, by a symmetry argument combined with the identity
\eqref{eq:JacobianReflection}  we conclude that
$\tilde{M}_{3}=M_{1}$, $\tilde{d}_{i}=d_{j}$ for
$i=1,2,\ldots,\tilde{M}_{3}$ and some $j=1,2,\ldots,M_{1}$,
and each $\tilde{x}_{i}$, for
$i=1,2,\ldots,\tilde{M}_{3}$ is the reflection of some point $x_{j}$ for
$j=1,2,\ldots,M_{1}$.
\\

Thus, after possibly reindexing some of the points, we have
\begin{equation*}
    J_{0}
    =\pi\sum_{i=1}^{M_{1}}d_{i}\bigl(\delta_{x_{i}}+\delta_{\bar{x}_{i}}\bigr)
    +\pi\sum_{i=1}^{\tilde{M}_{2}}\tilde{d}_{i}\delta_{\tilde{x}_{i}},
\end{equation*}
where $\bar{x}$ denotes the reflection of $x$ across the $x$-axis.
With this in place we now prove the desired convergence.
Let $\varphi\in{}\mathcal{A}_{\alpha,r}$ be such that
$\left\|\varphi\right\|_{\mathcal{A}_{\alpha,r}}\le1$.
Observe that
\begin{align*}
    \biggl<\pi\sum_{i=1}^{M_{1}}d_{i}\delta_{x_{i}}
    +\frac{\pi}{2}\sum_{j=1}^{\tilde{M}_{2}}\tilde{d}_{j}\delta_{x_{j}},
    \varphi\biggr>
    &=
    \biggl<\pi\sum_{i=1}^{M_{1}}d_{i}\delta_{x_{i}}
    +\frac{\pi}{2}\sum_{j=1}^{\tilde{M}_{2}}\tilde{d}_{j}\delta_{x_{j}},
    \tilde{\varphi}\biggr>\\
    &=\biggl<\frac{\pi}{2}\sum_{i=1}^{M_{1}}d_{i}\bigl(\delta_{x_{i}}
    +\delta_{\bar{x}_{i}}\bigr)
    +\frac{\pi}{2}\sum_{j=1}^{\tilde{M}_{2}}\tilde{d}_{j}\delta_{x_{j}},
    \tilde{\varphi}\biggr>\\
    &=\frac{1}{2}\bigl<J_{0},
    \tilde{\varphi}\bigr>.
\end{align*}
Hence, by combining the above observation with
\eqref{eq:JacobianReflection} and \eqref{HolderExtensionNorm} we have
\begin{align*}
    \biggl<\star{}J(u_{\varepsilon_{k}})-
    \pi\sum_{i=1}^{M_{1}}d_{i}\delta_{x_{i}}
    -\frac{\pi}{2}\sum_{j=1}^{\tilde{M}_{2}}\tilde{d}_{j}\delta_{\tilde{x}_{j}},
    \varphi\biggr>
    &=\frac{1}{2}\bigl<\star{}J(\tilde{u}_{\varepsilon_{k}})-
    J_{0},
    \tilde{\varphi}
    \bigr>\\
    &\le\frac{1}{2}\lVert\star{}J(\tilde{u}_{\varepsilon_{k}})-
    J_{0}
    \rVert_{(C_{c}^{0,\alpha}(B_{r}(0)))^{*}}.
\end{align*}
Taking the supremum over such $\varphi$ and using \eqref{TotalConvergence} gives
the desired result.
\end{proof}

\subsection{Interpolation}
Here we prove an analog of Lemma $3.3$ of \cite{JS} for $B_{r,+}(0)$.
Specifically, we show that for each signed Radon measure on $B_{r,+}(0)$,
$\nu$, we can estimate its norm in $\mathcal{A}_{\alpha,r}^{*}$, for
$0<\alpha<1$ in terms of its norm in $\mathcal{A}_{1,r}^{*}$ and
$(C(B_{r,+}(0)))^{*}$ as well as the constant $\alpha$.\\

We introduce some notation in order to prove the desired result.
We let $\eta\colon\mathbb{R}^{2}\to\mathbb{R}$ be a non-negative, smooth,
symmetric function with $\text{supp}(\eta)\subseteq{}B_{1}(0)$ satisfying
$\int_{B_{1}(0)}\!{}\eta=1$.
We define, for $\delta>0$, $r>0$, $0<\alpha\le1$, and
$\varphi\in\mathcal{A}_{\alpha,r}$, the mollified function
$\varphi_{\delta}\colon{}B_{r,+}(0)\to\mathbb{R}$ by
\begin{equation}\label{Mollifier:Def}
    \varphi_{\delta}(y)\coloneqq
    \int_{B_{\delta}(y)\cap{}B_{r,+}(0)}\!{}
    \frac{1}{\delta^{2}}\eta\biggl(\frac{y-z}{\delta}\biggr)\varphi(z)\,
    \mathrm{d}z.
\end{equation}
We use the mollified function to obtain an approximation to $\varphi$
whose Lipschitz norm can be explicitly estimated.
We begin with a preparatory lemma.
\begin{lemma}\label{MollifierBoundaryValues}
    Suppose $r>0$, $0<\alpha\le1$, $\varphi\in\mathcal{A}_{\alpha,r}$, and
    $\delta>0$.
    Suppose further that $s\in(0,r)$ is such that
    \begin{equation*}
        \emph{supp}(\varphi)\subseteq{}B_{s,+}(0)
    \end{equation*}
    and $M_{\delta}>0$ is chosen so that
    \begin{equation}\label{eq:MollifierLip}
        [\varphi_{\delta}]_{1}\le{}M_{\delta}.
    \end{equation}
    Then the functions $\sigma_{1,\delta},\sigma_{2,\delta}\colon{}B_{r,+}(0)\to\mathbb{R}$
    defined by
    \begin{align*}
        \sigma_{1,\delta}(y)&\coloneqq
        \sup_{x\in{}B_{r,+}(0)\setminus{}B_{s,+}(0)}
        \max\bigl\{\varphi_{\delta}(x)-M_{\delta}|x-y|,
        0\bigr\},\\
        \sigma_{2,\delta}(y)&\coloneqq
        \sup_{x\in{}B_{r,+}(0)\setminus{}B_{s,+}(0)}
        \min\bigl\{\varphi_{\delta}(x)+M_{\delta}|x-y|,
        0\bigr\},
    \end{align*}
    satisfy
    \begin{align}
        \varphi_{\delta}=\sigma_{1,\delta}-\sigma_{2,\delta}\hspace{15pt}
        &\text{on }B_{r,+}(0)\setminus{}B_{s,+}(0),
        \label{BoundaryAgreement}\\
        \lVert\sigma_{1,\delta}\rVert_{L^{\infty}(B_{r,+}(0))}
        &+\lVert\sigma_{2,\delta}\rVert_{L^{\infty}(B_{r,+}(0))}
        \le2\delta^{\alpha}\lVert\varphi\rVert_{C^{0,\alpha}(B_{r,+}(0))},
        \label{SupEstimate}\\
        [\sigma_{1,\delta}]_{1}+[\sigma_{2,\delta}]_{1}&\le2M_{\delta}.
        \label{LipschitzEstimate}
    \end{align}
\end{lemma}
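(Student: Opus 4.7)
The plan is a standard Lipschitz-regularization construction, adapted to peel off the positive and negative parts of $\varphi_{\delta}$ separately on the annular region $B_{r,+}(0)\setminus B_{s,+}(0)$ where $\varphi$ itself vanishes. I would verify the three conclusions in the order \eqref{LipschitzEstimate}, \eqref{SupEstimate}, \eqref{BoundaryAgreement}, each of which relies on only one ingredient: Lipschitz stability of suprema, the Hölder estimate for mollifiers, and the Lipschitz pinch from \eqref{eq:MollifierLip}, respectively.

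For \eqref{LipschitzEstimate}, note that for each fixed $x$ the map $y\mapsto\max\{\varphi_{\delta}(x)-M_{\delta}|x-y|,0\}$ is $M_{\delta}$-Lipschitz in $y$, being the composition of the $1$-Lipschitz function $t\mapsto\max\{t,0\}$ with an $M_{\delta}$-Lipschitz function; the analogous statement holds for the $\min\{\,\cdot\,,0\}$ variant. Since the pointwise supremum of a uniformly bounded family of $M_{\delta}$-Lipschitz functions is itself $M_{\delta}$-Lipschitz, one gets $[\sigma_{1,\delta}]_{1},\,[\sigma_{2,\delta}]_{1}\le M_{\delta}$, giving \eqref{LipschitzEstimate}. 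Uniform boundedness of the family is supplied by the next step.

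For \eqref{SupEstimate}, the key estimate is
\begin{equation*}
|\varphi_{\delta}(x)|\le\delta^{\alpha}\lVert\varphi\rVert_{C^{0,\alpha}(B_{r,+}(0))}\qquad\text{for every }x\in B_{r,+}(0)\setminus B_{s,+}(0).
\end{equation*}
Since $\text{supp}(\varphi)\subseteq B_{s,+}(0)$, we have $\varphi(x)=0$ for such $x$, so writing $\varphi_{\delta}(x)=\int\delta^{-2}\eta((x-z)/\delta)(\varphi(z)-\varphi(x))\,\mathrm{d}z$ and using that $\eta$ is supported in the unit ball gives the bound $[\varphi]_{\alpha}\delta^{\alpha}$. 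Discarding the nonnegative Lipschitz corrections in the definitions then yields $0\le\sigma_{1,\delta}(y)\le\sup_{x}|\varphi_{\delta}(x)|$ and $-\sup_{x}|\varphi_{\delta}(x)|\le\sigma_{2,\delta}(y)\le 0$, so summing produces \eqref{SupEstimate}.

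Finally, for \eqref{BoundaryAgreement}, when $y\in B_{r,+}(0)\setminus B_{s,+}(0)$ the point $x=y$ itself is admissible in the sup, which gives the lower bounds $\sigma_{1,\delta}(y)\ge\max\{\varphi_{\delta}(y),0\}$ and $\sigma_{2,\delta}(y)\ge\min\{\varphi_{\delta}(y),0\}$. For the opposite direction, the Lipschitz estimate \eqref{eq:MollifierLip} gives $\varphi_{\delta}(x)-M_{\delta}|x-y|\le\varphi_{\delta}(y)$ and $\varphi_{\delta}(x)+M_{\delta}|x-y|\ge\varphi_{\delta}(y)$ for every admissible $x$. A brief case split on the sign of $\varphi_{\delta}(y)$ then pins down both suprema exactly, so that $\sigma_{1,\delta}(y)-\sigma_{2,\delta}(y)=\varphi_{\delta}(y)$ on the annular region. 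The main subtlety, though not a deep obstacle, is this final case analysis: one must match the sign conventions in $\sigma_{2,\delta}$ so that the Lipschitz inequality and the $x=y$ lower bound pinch from opposite sides; once this is set up, all three conclusions follow routinely.
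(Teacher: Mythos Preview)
Your approach is correct and is exactly the McShane--Whitney style extension argument the paper invokes by reference to Lemma~3.3 of \cite{JS}; the paper gives no further details beyond that citation. Your ordering---Lipschitz stability of suprema for \eqref{LipschitzEstimate}, the $\delta^{\alpha}$ H\"older bound on the annulus for \eqref{SupEstimate}, then the $x=y$ and Lipschitz pinch for \eqref{BoundaryAgreement}---is the standard route and matches what the reference does.
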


\begin{proof}
The proof is similar to the one found in the proof of Lemma $3.3$ of
\cite{JS}.
\end{proof}

\begin{lemma}\label{InterpolationEstimate}
    Suppose $\nu$ is a signed Radon measure on $B_{r,+}(0)$ and $0<\alpha<1$ and
    $r>0$.
    Then
    \begin{equation*}
        \lVert\nu\rVert_{\mathcal{A}_{\alpha,r}^{*}}
        \le{}
        C
        \lVert\nu\rVert_{\mathcal{A}_{1,r}^{*}}^{\frac{\alpha}{1+\alpha}}
        \lVert\nu\rVert_{(C(B_{r,+}(0)))^{*}}^{\frac{1}{1+\alpha}}.
    \end{equation*}
\end{lemma}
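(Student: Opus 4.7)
The plan is to mimic the interpolation strategy from the proof of Lemma 3.3 in \cite{JS}, with Lemma \ref{MollifierBoundaryValues} inserted to handle the vanishing condition defining $\mathcal{A}_{1,r}$. Fix a test function $\varphi\in\mathcal{A}_{\alpha,r}$ with $\lVert\varphi\rVert_{\mathcal{A}_{\alpha,r}}=[\varphi]_{\alpha}\le 1$, and choose $s\in(0,r)$ so that $\mathrm{supp}(\varphi)\subseteq B_{s,+}(0)$. Let $\delta>0$ be a scale to be optimized at the end, and form $\varphi_{\delta}$ as in \eqref{Mollifier:Def}. Standard mollifier calculations, exploiting the H\"older continuity of $\varphi$, yield the estimates $\lVert\varphi-\varphi_{\delta}\rVert_{L^{\infty}(B_{r,+}(0))}\le C\delta^{\alpha}[\varphi]_{\alpha}$ and $[\varphi_{\delta}]_{1}\le C\delta^{\alpha-1}[\varphi]_{\alpha}$, so we may take $M_{\delta}=C\delta^{\alpha-1}$ in \eqref{eq:MollifierLip}.

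The Lipschitz function $\varphi_{\delta}$ is not directly usable as a test function for $\lVert\nu\rVert_{\mathcal{A}_{1,r}^{*}}$ because it need not vanish near $\partial B_{r,+}(0)\cap\{x_{2}>0\}$. To fix this, set
\begin{equation*}
    \psi_{\delta}\coloneqq\varphi_{\delta}-\sigma_{1,\delta}+\sigma_{2,\delta},
\end{equation*}
with $\sigma_{1,\delta},\sigma_{2,\delta}$ produced by Lemma \ref{MollifierBoundaryValues}. By \eqref{BoundaryAgreement} the function $\psi_{\delta}$ vanishes on $B_{r,+}(0)\setminus B_{s,+}(0)$, hence $\psi_{\delta}\in\mathcal{A}_{1,r}$; combining the mollifier estimate with \eqref{SupEstimate} and \eqref{LipschitzEstimate} gives the composite bounds $\lVert\varphi-\psi_{\delta}\rVert_{L^{\infty}}\le C\delta^{\alpha}$ and $[\psi_{\delta}]_{1}\le C\delta^{\alpha-1}$.

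The proof then proceeds by the splitting $\bigl<\nu,\varphi\bigr>=\bigl<\nu,\varphi-\psi_{\delta}\bigr>+\bigl<\nu,\psi_{\delta}\bigr>$, pairing each term with its natural dual norm to obtain
\begin{equation*}
    \bigl|\bigl<\nu,\varphi\bigr>\bigr|\le C\delta^{\alpha}\lVert\nu\rVert_{(C(B_{r,+}(0)))^{*}}+C\delta^{\alpha-1}\lVert\nu\rVert_{\mathcal{A}_{1,r}^{*}}.
\end{equation*}
Taking the supremum over admissible $\varphi$ and then optimizing in $\delta$ balances the two contributions and produces the claimed interpolation estimate. The main delicate point is the treatment of the optimal scale: one must check that the optimizing choice of $\delta$ lies in a range where the truncated mollifier in \eqref{Mollifier:Def} and the boundary correction of Lemma \ref{MollifierBoundaryValues} remain valid. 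In the degenerate regime where the optimizer would exceed $r$, the trivial inequality $\lVert\nu\rVert_{\mathcal{A}_{\alpha,r}^{*}}\le Cr^{\alpha}\lVert\nu\rVert_{(C(B_{r,+}(0)))^{*}}$, obtained from $\lVert\varphi\rVert_{L^{\infty}}\le Cr^{\alpha}[\varphi]_{\alpha}$, already implies the bound; this case distinction is the only technical care required.
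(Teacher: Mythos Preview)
Your argument has a genuine gap: the ``standard mollifier calculations'' you invoke are not valid for the truncated mollifier \eqref{Mollifier:Def}. Because the integration is only over $B_{\delta}(y)\cap B_{r,+}(0)$, at a point $y=(y_{1},0)$ on the flat boundary the kernel integrates to $\tfrac12$ (by symmetry of $\eta$), and one only gets $\bigl|\tfrac12\varphi(y_{1},0)-\varphi_{\delta}(y_{1},0)\bigr|\le C\delta^{\alpha}$, as recorded in \eqref{HalfEstimate}. Since $\varphi\in\mathcal{A}_{\alpha,r}$ is \emph{not} required to vanish on $\{x_{2}=0\}$, the difference $\varphi-\varphi_{\delta}$ is of order $|\varphi(y_{1},0)|$ there, not $O(\delta^{\alpha})$, so your $L^{\infty}$ estimate fails. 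For the same reason the cancellation trick $\int\nabla\eta=0$ is unavailable near $\{x_{2}=0\}$, and one cannot improve the crude bound $[\varphi_{\delta}]_{1}\le C\delta^{-1}$ of \eqref{MollifierLipschitz} to your claimed $C\delta^{\alpha-1}$. Consequently neither of the two bounds feeding into your splitting holds on the whole half-ball, and your optimization (which would in any case produce the different exponents $\alpha$ and $1-\alpha$) is not justified.

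The paper's proof confronts exactly this obstacle by introducing a cutoff $\zeta(y_{2})$ that separates a strip of width $2\delta$ along $\{x_{2}=0\}$ from the interior. In the interior region the standard mollifier estimates do hold; in the boundary strip one first replaces $\varphi(y)$ by $\varphi(y_{1},0)$ at cost $O(\delta^{\alpha})$, then uses the half-value relation \eqref{HalfEstimate} to pass to $2\varphi_{\delta}(y_{1},0)$, and only then applies the boundary-correction of Lemma~\ref{MollifierBoundaryValues}. This is why the Lipschitz contribution enters with weight $\delta^{-1}$ rather than $\delta^{\alpha-1}$, yielding the stated exponents $\tfrac{\alpha}{1+\alpha}$ and $\tfrac{1}{1+\alpha}$.
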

\begin{proof}
    Without loss of generality we may assume
    $0<\lVert\nu\rVert_{(C(B_{r,+}(0)))^{*}}<\infty$ and
    $0<\lVert\nu\rVert_{\mathcal{A}_{1,r}^{*}}<\infty$.
    Let $\varphi\in\mathcal{A}_{\alpha,r}$ for $0<\alpha<1$.
    We let $\delta>0$ denote a scale of regularization to be chosen later.
    We prove a preliminary estimate by considering two cases.\\
    
    \noindent\boxed{\text{Case }1\text{: }0<\delta<r}\\
    \par{}Let $\zeta\colon[0,\infty)\to\mathbb{R}$ denote a smooth non-increasing
    function satisfying
    $\zeta\ge0$, $\zeta\equiv1$ on $[0,\delta]$, $\zeta\equiv0$
    on $[2\delta,\infty)$, and
    $\left\|\zeta'\right\|_{L^{\infty}}\le\frac{2}{\delta}$.
    Observe that
    \begin{align*}
        \int_{B_{r,+}(0)}\!{}\varphi(y)\,\mathrm{d}\nu
        &=\int_{B_{r,+}(0)}\!{}
        (1-\zeta(y_{2}))\varphi(y)\,\mathrm{d}\nu
        +\int_{B_{r,+}(0)}\!{}
        \zeta(y_{2})[\varphi(y)-\varphi(y_{1},0)]\,\mathrm{d}\nu
        \\
        &
        +\int_{B_{r,+}(0)}\!{}
        \zeta(y_{2})\varphi(y_{1},0)\,\mathrm{d}\nu\\
        &=I_{1}+I_{2}+I_{3}.
    \end{align*}
    Then notice that we can estimate
    $I_{2}$ as
    \begin{align*}
        |I_{2}|
        &\le\lVert\zeta(y_{2})[\varphi(y)-\varphi(y_{1},0)]\rVert
        _{L^{\infty}(B_{r,+}(0))}
        \lVert\nu\rVert_{(C(B_{r,+}(0)))^{*}}\\
        &\le2^{\alpha}\delta^{\alpha}
        \lVert\varphi\rVert_{C^{0,\alpha}(B_{r,+}(0))}
        \lVert\nu\rVert_{(C(B_{r,+}(0)))^{*}}.
    \end{align*}
    We let $\varphi_{\delta}$ be as defined in \eqref{Mollifier:Def}
    and we observe that for $y=(y_{1},0)$, using that $\varphi=0$ in a
    neighbourhood of
    $\partial{}B_{r,+}(0)\cap\{y_{2}>0\}$ as well as symmetry of $\eta$,
    we have
    \begin{equation}\label{HalfEstimate}
        \biggl\lVert\frac{1}{2}\varphi(y_{1},0)-\varphi_{\delta}(y_{1},0)
        \biggr\rVert_{L^{\infty}(-r,r)}
        \le\delta^{\alpha}\lVert\varphi\rVert_{C^{0,\alpha}(B_{r,+}(0))}.
    \end{equation}
    We now decompose $I_{3}$ as
    \begin{align*}
        I_{3}&=2\int_{B_{r,+}(0)}\!{}
        \zeta(y_{2})\frac{\varphi(y_{1},0)}{2}\,\mathrm{d}\nu\\
        &=2\int_{B_{r,+}(0)}\!{}
        \zeta(y_{2})\Bigl[\frac{1}{2}\varphi(y_{1},0)
        -\varphi_{\delta}(y_{1},0)\Bigr]\,
        \mathrm{d}\nu\\
        &+2\int_{B_{r,+}(0)}\!{}
        \zeta(y_{2})[\varphi_{\delta}(y_{1},0)
        -\varphi_{\delta}(y_{1},y_{2})]\,\mathrm{d}\nu\\
        &+2\int_{B_{r,+}(0)}\!{}\zeta(y_{2})\bigl[
        \varphi_{\delta}(y_{1},y_{2})-\sigma_{1,\delta}(y_{1},y_{2})
        +\sigma_{2,\delta}(y_{1},y_{2})\bigr]\,
        \mathrm{d}\nu\\
        &+2\int_{B_{r,+}(0)}\!{}\zeta(y_{2})\bigl[\sigma_{1,\delta}(y_{1},y_{2})
        -\sigma_{2,\delta}(y_{1},y_{2})\bigr]\,\mathrm{d}\nu\\
        &=A+B+C+D.
    \end{align*}
    By \eqref{HalfEstimate} we have
    \begin{equation*}
        |A|\le2\delta^{\alpha}\lVert\varphi\rVert
        _{C^{0,\alpha}(B_{r,+}(0))}
        \lVert\nu\rVert_{(C(B_{r,+}(0)))^{*}}.
    \end{equation*}
    Since $\text{supp}(\zeta)\subseteq[0,2\delta]$ then
    \begin{equation*}
        \lVert\zeta(y_{2})[\varphi_{\delta}(y_{1},0)
        -\varphi_{\delta}(y_{1},y_{2})]\rVert_{L^{\infty}(B_{r,+}(0))}
        \le2^{\alpha}\delta^{\alpha}
        \lVert\varphi\rVert_{C^{0,\alpha}(B_{r,+}(0))}
    \end{equation*}
    and hence
    \begin{equation*}
        |B|\le2^{\alpha+1}\delta^{\alpha}
        \lVert\varphi\rVert_{C^{0,\alpha}(B_{r,+}(0))}
        \lVert\nu\rVert_{(C(B_{r,+}(0)))^{*}}.
    \end{equation*}
    Next by \eqref{SupEstimate} we have
    \begin{equation*}
        |D|\le4\delta^{\alpha}\left\|\varphi\right\|
        _{C^{0,\alpha}(B_{r,+}(0))}
        \lVert\nu\rVert_{(C(B_{r,+}(0)))^{*}}.
    \end{equation*}
    We observe that, since $\varphi=0$ in a neighbourhood of
    $\partial{}B_{r,+}(0)\cap\{y_{2}>0\}$, for each
    $(y_{1},y_{2})\in{}B_{r,+}(0)$ we have
    \begin{align*}
        \biggl|
        \frac{\partial\varphi_{\delta}}{\partial{}y_{1}}(y_{1},y_{2})
        \biggr|
        &=\biggl|\int_{B_{\delta}(y_{1},y_{2})\cap{}B_{r,+}(0)}\!{}
        \frac{1}{\delta^{3}}
        \frac{\partial\eta}{\partial{}y_{1}}
        \Bigl(\frac{(y_{1},y_{2})-z}{\delta}\Bigr)
        \varphi(z)\,\mathrm{d}z\biggr|\\
        &\le{}\delta^{-1}r^{\alpha}
        \lVert\varphi\rVert_{C^{0,\alpha}(B_{r,+}(0))}
        \lVert\eta\rVert_{W^{1,1}(\mathbb{R}^{2})}
    \end{align*}
    using that
    \begin{equation*}
        \varphi\Bigl(z_{1},\sqrt{r^{2}-z_{1}^{2}}\Bigr)=0,\hspace{20pt}
        \Bigl|\sqrt{r^{2}-z_{1}^{2}}-z_{2}\Bigr|\le{}r.
    \end{equation*}
    A similar estimate holds for the partial derivative in $y_{2}$ and
    we conclude that
    \begin{equation}\label{MollifierLipschitz}
        [\varphi_{\delta}]_{1}
        \le2\delta^{-1}r^{\alpha}
        \lVert\varphi\rVert_{C^{0,\alpha}(B_{r,+}(0))}
        \lVert\eta\rVert_{W^{1,1}(\mathbb{R}^{2})}.
    \end{equation}
    Now we estimate $C$.
    Setting $M_{\delta}\coloneqq
    2\delta^{-1}r^{\alpha}\lVert\varphi\rVert_{C^{0,\alpha}(B_{r,+}(0))}
    \lVert\eta\rVert_{W^{1,1}(\mathbb{R}^{2})}$
    we see that by \eqref{eq:MollifierLip}, \eqref{BoundaryAgreement},
    \eqref{SupEstimate},
    and \eqref{LipschitzEstimate} we have
    $\zeta[\varphi_{\delta}-\sigma_{1,\delta}+\sigma_{2,\delta}]\in\mathcal{A}_{1,r}$.
    and
    \begin{align*}
        |C|&\le\Bigl[12\delta^{-1}r^{\alpha}
        \lVert\eta\rVert_{W^{1,1}(\mathbb{R}^{2})}
        +8\delta^{\alpha-1}
        +4r^{\alpha}\delta^{-1}
        \Bigr]
        \lVert\varphi\rVert_{C^{0,\alpha}(B_{r,+}(0))}
        \lVert\nu\rVert_{\mathcal{A}_{1,r}^{*}}\\
        &\le{}C
        \delta^{-1}\lVert\varphi\rVert_{C^{0,\alpha}(B_{r,+}(0))}
        \lVert\nu\rVert_{\mathcal{A}_{1,r}^{*}}.
    \end{align*}
    Now we estimate $I_{1}$.
    \begin{align*}
        I_{1}&=\int_{B_{r,+}(0)}\!{}(1-\zeta(y_{2}))
        [\varphi(y)-\varphi_{\delta}(y)]\,\mathrm{d}\nu\\
        &+\int_{B_{r,+}(0)}\!{}(1-\zeta(y_{2}))
        [\varphi_{\delta}(y)-\sigma_{1,\delta}(y)+\sigma_{2,\delta}(y)]\,
        \mathrm{d}\nu\\
        &+\int_{B_{r,+}(0)}\!{}(1-\zeta(y_{2}))
        [\sigma_{1,\delta}(y)-\sigma_{2,\delta}(y)]\,\mathrm{d}\nu\\
        &=(i)+(ii)+(iii)
    \end{align*}
    where $\sigma_{1,\delta}$ and $\sigma_{2,\delta}$ are the functions
    introduced in Lemma \ref{MollifierBoundaryValues}.
    Since $\varphi\equiv0$ in a neighbourhood of
    $\partial{}B_{r,+}(0)\cap\{y_{2}>0\}$ and since for
    $(y_{1},y_{2})\in{}B_{r,+}(0)\cap\{y_{2}\ge\delta\}$ we have
    $B_{\delta}(y_{1},y_{2})\cap\{y_{2}=0\}=\varnothing$ then
    \begin{equation*}
        \varphi(y)-\varphi_{\delta}(y)
        =\int_{B_{\delta}(y)}\!{}\frac{1}{\delta^{2}}
        \eta\Bigl(\frac{y-z}{\delta}\Bigr)
        [\varphi(y)-\varphi(z)]\,\mathrm{d}z
        \le\delta^{\alpha}\lVert\varphi\rVert_{C^{0,\alpha}(B_{r,+}(0))}
    \end{equation*}
    for $B_{r,+}(0)\cap\{y_{2}\ge\delta\}$.
    Since $\text{supp}(1-\zeta(y_{2}))
    \subseteq{}B_{r,+}(0)\cap\{y_{2}\ge\delta\}$
    then
    \begin{equation*}
        \lVert(1-\zeta(y_{2}))[\varphi-\varphi_{\delta}]\rVert_{L^{\infty}
        (B_{r,+}(0))}
        \le\delta^{\alpha}\lVert\varphi\rVert_{C^{0,\alpha}(B_{r,+}(0))}.
    \end{equation*}
    Thus,
    \begin{equation*}
        |(i)|
        \le\delta^{\alpha}\lVert\varphi\rVert_{C^{0,\alpha}(B_{r,+}(0))}
        \lVert\nu\rVert_{(C(B_{r,+}(0)))^{*}}.
    \end{equation*}
    Using \eqref{eq:MollifierLip}, \eqref{BoundaryAgreement}, \eqref{SupEstimate},
    \eqref{LipschitzEstimate}, and \eqref{MollifierLipschitz} we have
    \begin{align*}
        |(ii)|&\le\Bigl[
        6\delta^{-1}r^{\alpha}
        \lVert\eta\rVert_{W^{1,1}(\mathbb{R}^{2})}
        +4\delta^{\alpha-1}
        +2r^{\alpha}\delta^{-1}
        \Bigr]
        \lVert\varphi\rVert_{C^{0,\alpha}(B_{r,+}(0))}
        \lVert\nu\rVert_{\mathcal{A}_{1,r}^{*}}\\
        &\le{}C\delta^{-1}\lVert\varphi\rVert_{C^{0,\alpha}(B_{r,+}(0))}
        \lVert\nu\rVert_{\mathcal{A}_{1,r}^{*}}.
    \end{align*}
    Finally, by \eqref{SupEstimate} we have
    \begin{equation*}
        |(iii)|\le2\delta^{\alpha}
        \lVert\varphi\rVert_{C^{0,\alpha}(B_{r,+}(0))}
        \lVert\nu\rVert_{(C(B_{r,+}(0)))^{*}}.
    \end{equation*}
    The above gives
    \begin{equation*}
        \int_{B_{r,+}(0)}\!{}\varphi\,\mathrm{d}\nu
        \le{}C\Bigl[\delta^{\alpha}
        \lVert\nu\rVert_{(C(B_{r,+}(0)))^{*}}
        +\delta^{-1}
        \lVert\nu\rVert_{\mathcal{A}_{1,r}^{*}}\Bigr].
    \end{equation*}
    \noindent\boxed{\text{Case }2\text{: }\delta\ge{}r}\\
    \par{}In this case we write
    \begin{align*}
        \int_{B_{r,+}(0)}\!{}\varphi(y_{1},y_{2})\,\mathrm{d}\nu
        &=\int_{B_{r,+}(0)}\!{}
        [\varphi(y_{1},y_{2})-\varphi(y_{1},0)]\,\mathrm{d}\nu
        +\int_{B_{r,+}(0)}\!{}\varphi(y_{1},0)\,\mathrm{d}\nu\\
        &=E+F.
    \end{align*}
    Similar to the estimate for
    $I_{2}$ we observe that, since $|y_{2}|<r$
    on $B_{r,+}(0)$ and since $\delta\ge{}r$, we have
    \begin{equation*}
        |E|\le{}r^{\alpha}\lVert\varphi\rVert_{C^{0,\alpha}(B_{r,+}(0))}
        \lVert\nu\rVert_{(C(B_{r,+}(0)))^{*}}
        \le\delta^{\alpha}\lVert\varphi\rVert_{C^{0,\alpha}(B_{r,+}(0))}
        \lVert\nu\rVert_{(C(B_{r,+}(0)))^{*}}.
    \end{equation*}
    Using the mollification, $\varphi_{\delta}$, from
    \eqref{Mollifier:Def} as well as the
    functions $\sigma_{1,\delta}$ and $\sigma_{2,\delta}$ from Lemma
    \ref{MollifierBoundaryValues} we can rewrite $F$ similar to the estimate
    for 
    $I_3$ to obtain
    \begin{align*}
        F&=2\int_{B_{r,+}(0)}\!{}
        \frac{1}{2}\varphi(y_{1},0)\,\mathrm{d}\nu\\
        &=2\int_{B_{r,+}(0)}\!{}
        \Bigl[\frac{1}{2}\varphi(y_{1},0)
        -\varphi_{\delta}(y_{1},0)\Bigr]\,
        \mathrm{d}\nu
        +2\int_{B_{r,+}(0)}\!{}
        [\varphi_{\delta}(y_{1},0)
        -\varphi_{\delta}(y_{1},y_{2})]\,\mathrm{d}\nu\\
        &+2\int_{B_{r,+}(0)}\!{}
        [\varphi_{\delta}(y)-\sigma_{1,\delta}(y)
        +\sigma_{2,\delta}(y)]\,\mathrm{d}\nu
        +2\int_{B_{r,+}(0)}\!{}[\sigma_{1,\delta}(y)-\sigma_{2,\delta}(y)]\,
        \mathrm{d}\nu\\
        &=FA+FB+FC+FD.
    \end{align*}
    After possibly extending $\varphi$ by zero across
    $\partial{}B_{r,+}(0)\cap\{y_{2}>0\}$ we have, by symmetry of $\eta$
    and \eqref{HalfEstimate} that
    \begin{equation*}
        |FA|\le2\delta^{\alpha}\lVert\varphi\rVert_{C^{0,\alpha}(B_{r,+}(0))}
        \lVert\nu\rVert_{(C(B_{r,+}(0)))^{*}}.
    \end{equation*}
    Next, we estimate $FB$ similar to 
    $B$ and use that $r\le\delta$
    to obtain
    \begin{equation*}
        |FB|\le
        2\delta^{\alpha}\lVert\varphi\rVert_{C^{0,\alpha}(B_{r,+}(0))}
        \lVert\nu\rVert_{(C(B_{r,+}(0)))^{*}}.
    \end{equation*}
    We estimate $FD$ using \eqref{SupEstimate} as
    \begin{equation*}
        |FD|\le4\delta^{\alpha}
        \lVert\varphi\rVert_{C^{0,\alpha}(B_{r,+}(0))}
        \lVert\nu\rVert_{(C(B_{r,+}(0)))^{*}}.
    \end{equation*}
    Finally, \eqref{BoundaryAgreement}, \eqref{LipschitzEstimate}, and
    \eqref{MollifierLipschitz} gives that
    $\varphi_{\delta}-\sigma_{1,\delta}+\sigma_{2,\delta}\in\mathcal{A}_{\alpha,r}$
    and hence
    \begin{equation*}
        |FC|\le{}12\delta^{-1}r^{\alpha}
        \lVert\eta\rVert_{W^{1,1}(\mathbb{R}^{2})}
        \lVert\varphi\rVert_{C^{0,\alpha}(B_{r,+}(0))}
        \lVert\nu\rVert_{\mathcal{A}_{\alpha,r}^{*}}.
    \end{equation*}
    Putting the estimates of this case together we have
    \begin{equation*}
        \int_{B_{r,+}(0)}\!{}\varphi\,\mathrm{d}\nu
        \le{}C\Bigl[\delta^{\alpha}
        \lVert\nu\rVert_{(C(B_{r,+}(0)))^{*}}
        +\delta^{-1}
        \lVert\nu\rVert_{\mathcal{A}_{1,r}^{*}}\Bigr]
        \lVert\varphi\rVert_{C^{0,\alpha}(B_{r,+}(0))}.
    \end{equation*}
    Putting the two cases together and noting that
    $\varphi\in\mathcal{A}_{\alpha,r}$ was arbitrary then we obtain
    \begin{equation*}
        \lVert\nu\rVert_{\mathcal{A}_{\alpha,r}^{*}}
        \le{}C
        \bigl[\delta^{\alpha}\lVert\nu\rVert_{(C(B_{r,+}(0)))^{*}}
        +\delta^{-1}
        \lVert\nu\rVert_{\mathcal{A}_{1,r}^{*}}\bigr]
    \end{equation*}
    for all $\delta>0$.
    Taking
    $\delta=
    \lVert\nu\rVert_{\mathcal{A}_{1,r}^{*}}^{\frac{1}{1+\alpha}}
    \lVert\nu\rVert_{(C(B_{r,+}(0)))^{*}}^{\frac{-1}{1+\alpha}}$ we obtain
    \begin{equation*}
        \lVert\nu\rVert_{\mathcal{A}_{\alpha,r}^{*}}
        \le{}C\
        \lVert\nu\rVert_{\mathcal{A}_{1,r}^{*}}^{\frac{\alpha}{1+\alpha}}
        \lVert\nu\rVert_{(C(B_{r,+}(0)))^{*}}^{\frac{1}{1+\alpha}}.
    \end{equation*}
\end{proof}

\subsection{Slicing}\label{sec:Slicing}
In this subsection we prove, through a slicing argument, that the
portion of the vortex set near the boundary cannot meet many normal lines
to the boundary.
This will be used in order to estimate the size of error terms involving the
modulus.
\begin{center}
\begin{figure}
\begin{center}
\begin{tikzpicture}[use Hobby shortcut,decoration={
    markings,
    mark=between positions 0 and 1 step 0.0125 with {\draw[thin] (0,0)--(0,0.2);},
  }]  
\draw[rotate=-90,scale=2,postaction={decorate}] (3,0) .. +(1,0) .. +(1,2) .. +(1,3) .. 
+(0,3) .. (3,0);   
\begin{scope}  
\clip[draw] (2.3,-5.7) .. (2.5,-5.2) .. (2.4,-5.3) .. (2.3,-5.7);
\fill[pattern=north east lines](2.5,-5.5) circle(1);
\end{scope}
\begin{scope}  
\clip[draw] (4.5,-7.5) .. (4.9,-7.45) .. (4.4,-7.25) .. (4.5,-7.5);
\fill[pattern=north east lines](4.5,-7.5) circle(1);
\end{scope}
\begin{scope}  
\clip[draw] (1.4,-6.5) coordinate (M) circle[radius=2mm];
\fill[pattern=north east lines](1.4,-6.5) circle(1);
\end{scope}
\end{tikzpicture}
\caption{Depiction of extended normal lines.}
\end{center}
\end{figure}
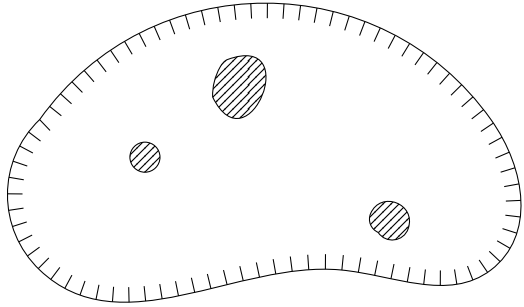
\end{center}
\begin{lemma}\label{Containment}
    Suppose $u\in{}W^{1,2}(\Omega;\mathbb{R}^{2})$
    satisfies $E_{\varepsilon}(u)\le{}
    C\mathopen{}\left|\log(\varepsilon)\right|\mathclose{}$.
    Then, for each $i=0,1,2,\ldots,b$ and $j=1,2,\ldots,N_{i}$, if we set
    \begin{equation*}
        \mathcal{B}_{i,j,\varepsilon}\coloneqq
        \biggl\{t\in(-r_{1},r_{1}):
        \exists{}s\in\Bigl[0,\sqrt{r_{1}^{2}-t^{2}}\Bigr)\,\,\emph{s.t.}\,\,
        \bigl||u(\psi_{i,j}(t,s))|-1\bigr|
        >\varepsilon^{\frac{1}{8}}\biggr\}
    \end{equation*}
    then we have
    \begin{equation*}
        \mathcal{L}^{1}(\mathcal{B}_{i,j,\varepsilon})
        \le{}C(\Omega)\varepsilon^{\frac{1}{4}}.
    \end{equation*}
\end{lemma}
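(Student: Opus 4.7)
The plan is to run a one-dimensional slicing argument on each half-disc chart $\psi_{i,j}$, adapted from Jerrard--Soner~\cite{JS}. Set $v(t,s):=u(\psi_{i,j}(t,s))$ and $\rho(t,s):=|v(t,s)|$ on $B_{r_{1},+}(0)$. Because $J\psi_{i,j}$ and $\|D\psi_{i,j}\|$ are uniformly bounded above and away from zero by the construction in Section~\ref{TangentNormalCoordinates}, and because $|\nabla u|^{2}(\psi_{i,j}(t,s))\ge|\partial_{s}v(t,s)|^{2}\ge|\partial_{s}\rho(t,s)|^{2}$, a change of variables gives
\begin{equation*}
    C\,\mathopen{}\left|\log(\varepsilon)\right|\mathclose{}\ge E_{\varepsilon}(u,\mathcal{U}_{i,j})\ge c_{0}\int_{-r_{1}}^{r_{1}}E^{\mathrm{slice}}(t)\,dt,
\end{equation*}
where $c_{0}=c_{0}(\Omega)>0$ and
\begin{equation*}
    E^{\mathrm{slice}}(t)\coloneqq\int_{0}^{\sqrt{r_{1}^{2}-t^{2}}}\Bigl(\tfrac{1}{2}|\partial_{s}\rho(t,s)|^{2}+\tfrac{1}{4\varepsilon^{2}}(\rho(t,s)^{2}-1)^{2}\Bigr)\,ds.
\end{equation*}
By Fubini, for a.e.\ $t$ the slice $\rho(t,\cdot)$ lies in $W^{1,2}$ of its interval and is therefore continuous.

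The heart of the proof is a uniform slice-energy lower bound $E^{\mathrm{slice}}(t)\ge c\,\varepsilon^{-3/4}$ for every $t\in\mathcal{B}_{i,j,\varepsilon}$ with $\sqrt{r_{1}^{2}-t^{2}}\ge\varepsilon^{1/4}$. Fix such a $t$ and let $s_{0}$ be a point provided by the definition of $\mathcal{B}_{i,j,\varepsilon}$; without loss of generality $\rho(t,s_{0})>1+\varepsilon^{1/8}$, the other sign being symmetric. I split into two cases. \textbf{Case A:} if $\rho(t,s)\ge1+\varepsilon^{1/8}/2$ throughout the slice, then $(\rho^{2}-1)^{2}\ge\varepsilon^{1/4}$ pointwise and the potential term alone gives $E^{\mathrm{slice}}(t)\ge\tfrac{1}{16}\varepsilon^{-3/2}$. \textbf{Case B:} otherwise, continuity yields some $s'$ in the slice with $\rho(t,s')=1+\varepsilon^{1/8}/2$. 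Modica's pointwise AM-GM inequality
\begin{equation*}
    \tfrac{1}{2}|\partial_{s}\rho|^{2}+\tfrac{1}{4\varepsilon^{2}}(\rho^{2}-1)^{2}\ge\tfrac{1}{\sqrt{2}\,\varepsilon}|\partial_{s}\rho|\,|\rho^{2}-1|
\end{equation*}
integrated between $s'$ and $s_{0}$ yields
\begin{equation*}
    E^{\mathrm{slice}}(t)\ge\tfrac{1}{\sqrt{2}\,\varepsilon}\bigl|\tilde{G}(\rho(t,s_{0}))-\tilde{G}(\rho(t,s'))\bigr|,\qquad\tilde{G}(\rho)\coloneqq\int_{1}^{\rho}(r^{2}-1)\,dr=\tfrac{(\rho-1)^{2}(\rho+2)}{3}.
\end{equation*}
Direct evaluation gives $\tilde{G}(\rho(t,s_{0}))\ge\varepsilon^{1/4}$ and $\tilde{G}(\rho(t,s'))\le\tfrac{7}{24}\varepsilon^{1/4}$ (for $\varepsilon$ small), so $E^{\mathrm{slice}}(t)\ge c\,\varepsilon^{-3/4}$, as claimed.

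Integrating the slice bound over $\mathcal{B}_{i,j,\varepsilon}\cap\{t:\sqrt{r_{1}^{2}-t^{2}}\ge\varepsilon^{1/4}\}$ and combining with the Fubini inequality yields $c\,\varepsilon^{-3/4}\,\mathcal{L}^{1}(\text{that set})\le C(\Omega)\,\mathopen{}\left|\log(\varepsilon)\right|\mathclose{}$, while the complementary set $\{t\in(-r_{1},r_{1}):\sqrt{r_{1}^{2}-t^{2}}<\varepsilon^{1/4}\}$ has measure $O(\varepsilon^{1/2})$. Together these give $\mathcal{L}^{1}(\mathcal{B}_{i,j,\varepsilon})\le C(\Omega)\,\mathopen{}\left|\log(\varepsilon)\right|\mathclose{}\varepsilon^{3/4}+O(\varepsilon^{1/2})\le C(\Omega)\,\varepsilon^{1/4}$ for $\varepsilon$ small, after enlarging the constant to absorb the logarithm and to cover $\varepsilon$ close to $1$ by the trivial bound $\mathcal{L}^{1}(\mathcal{B}_{i,j,\varepsilon})\le 2r_{1}$. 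The main technical obstacle will be calibrating the intermediate threshold $\varepsilon^{1/8}/2$ between the two cases: it must be chosen so that both the pointwise potential estimate in Case~A and the $\tilde{G}$-gap in Case~B produce slice-energy bounds that comfortably beat the target rate $\varepsilon^{-1/4}$ demanded by the logarithmic energy budget.
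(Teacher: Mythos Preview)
Your argument is correct, and it reaches the stated bound $\mathcal{L}^{1}(\mathcal{B}_{i,j,\varepsilon})\le C(\Omega)\varepsilon^{1/4}$ (in fact with room to spare, since your estimate gives $O(\varepsilon^{1/2})$).

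The paper's route is genuinely different. Rather than combining the gradient and potential terms via the Modica--Mortola trick on each slice, the paper decouples them: first it removes, by Chebyshev, the set $\mathcal{B}_{1,i,j,\varepsilon}$ of $t$'s for which the slice gradient energy $\int_{\Gamma_{i,j,t}}|\nabla|u||^{2}$ exceeds $\varepsilon^{-1/4}|\log\varepsilon|$ (measure $\le C\varepsilon^{1/4}$, and this is the dominant contribution); then on the remaining bad slices it uses the resulting $C^{0,1/2}$ control of $|u|$ along $\Gamma_{i,j,t}$ to show the sub-level set $\{||u|-1|>\varepsilon^{1/8}/2\}$ has length $\gtrsim\varepsilon^{1/2}/|\log\varepsilon|$, and finally invokes only the \emph{potential} energy on that set together with coarea to bound these slices by $C\varepsilon^{5/4}|\log\varepsilon|^{2}$. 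There is also a preliminary Sard step to discard singular values of $(\psi_{i,j}^{-1})^{1}$, which your change-of-variables formulation avoids entirely.

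Your approach is shorter and more self-contained: the AM--GM inequality on each slice yields the uniform lower bound $E^{\mathrm{slice}}(t)\ge c\varepsilon^{-3/4}$ in one stroke, and the case split handles the possibility that $|u|$ never returns near $1$ on the slice. The paper's approach, by contrast, separates the roles of the Dirichlet and potential energies and goes through a H\"older continuity argument; this is more circuitous here but is in the spirit of other ``good slice'' arguments in the Ginzburg--Landau literature where one wants an explicit gradient bound on most slices for later use.
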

\begin{proof}
    Notice that by the construction in Section \ref{TangentNormalCoordinates}
    there are $C^{1,1}$-coordinate charts
    $\psi_{i,j}\colon{}B_{r_{1},+}(0)\to\mathcal{U}_{i,j}$,
    for $i=0,1,\ldots,b$ and $j=1,2,\ldots,N_{i}$.
Our first ``bad" set is
\begin{equation*}
    \mathcal{B}_{0,i,j,\varepsilon}\coloneqq\biggl\{t\in(-r_{1},r_{1}):
    t\text{ is a singular value for }(\psi_{i,j}^{-1})^{1}\biggr\}.
\end{equation*}
Since $\psi_{i,j}$ is a $C^{1,1}$-diffeomorphism then by Sard's theorem,
see \cite{Ba}, we have
that
\begin{equation*}
    \mathcal{L}^{1}(\mathcal{B}_{0,i,j,\varepsilon})=0.
\end{equation*}
We let $\mathcal{G}_{0,i,j,\varepsilon}\coloneqq(-r_{1},r_{1})\setminus
\mathcal{B}_{0,i,j,\varepsilon}$.
Next, we consider the set
\begin{equation*}
    \mathcal{B}_{1,i,j,\varepsilon}\coloneqq\biggl\{t\in(-r_{1},r_{1}):
    \int_{\Gamma_{i,j,t}}\!{}\bigl|\nabla|u|\bigr|^{2}>
    \frac{1}{\varepsilon^{\frac{1}{4}}}
    \mathopen{}\left|\log(\varepsilon)\right|\mathclose{}\biggr\},
\end{equation*}
where we have set
$\Gamma_{i,j,t}\coloneqq\bigl\{x\in\mathcal{U}_{i,j}:
((\psi_{i,j}^{-1})^{1})(x)=t\bigr\}$.
By Chebyshev's inequality and the coarea formula we have
\begin{equation*}
    \mathcal{L}^{1}(\mathcal{B}_{1,i,j,\varepsilon})\le{}
    \frac{\displaystyle\int_{\mathcal{B}_{1,i,j,\varepsilon}}\int_{\Gamma_{i,j,t}}\!{}
    \bigl|\nabla|u|\bigr|^{2}}
    {\frac{\mathopen{}\left|\log(\varepsilon)\right|\mathclose{}}{\varepsilon^{\frac{1}{4}}}}
    \le\frac{\varepsilon^{\frac{1}{4}}}{\mathopen{}\left|\log(\varepsilon)\right|\mathclose{}}
    \cdot\int_{\mathcal{U}_{i,j}}\!{}\bigl|\nabla|u|\bigr|^{2}
    |\nabla(\psi_{i,j}^{-1})^{1}|
    \le{}C(\Omega)\varepsilon^{\frac{1}{4}}.
\end{equation*}
We let $\mathcal{G}_{1,i,j,\varepsilon}\coloneqq(-r_{1},r_{1})
\setminus\mathcal{B}_{1,i,j,\varepsilon}$.
The final ``bad" set is
\begin{equation*}
    \mathcal{B}_{2,i,j,\varepsilon}\coloneqq\bigg\{t\in\mathcal{G}_{0,i,j,\varepsilon}
    \cap\mathcal{G}_{1,i,j.\varepsilon}:
    \exists{}y\in\Bigl[0,\sqrt{r_{1}^{2}-t^{2}}\Bigr)\text{ such that }
    \bigl||u(\psi_{i,j}(t,y))|-1\bigr|
    >\varepsilon^{\frac{1}{8}}\biggr\}.
\end{equation*}
To estimate this we let $t\in\mathcal{B}_{2,i,j,\varepsilon}$ and find $y$ such that
\begin{equation*}
    \bigl||u(\psi_{i,j}(t,y))|-1\bigr|
    >\varepsilon^{\frac{1}{8}}.
\end{equation*}
Suppose there is $y'\in[0,\sqrt{r_{1}^{2}-t^{2}})$ such that
$\bigl||u(\psi_{i,j}(t,y'))|-1\bigr|
\le\frac{\varepsilon^{\frac{1}{8}}}{2}$.
Then since $t\in\mathcal{G}_{0,i,j,\varepsilon}\cap\mathcal{G}_{1,i,j,\varepsilon}$ we have
\begin{align*}
    \frac{\varepsilon^{\frac{1}{8}}}{2}
    &\le\Bigl|\bigl||u(\psi_{i,j}(t,y))|-1\bigr|-
    \bigl||u(\psi_{i,j}(t,y'))|-1\bigr|\Bigr|
    \\
    &\le{}C(\Omega)\biggl(\int_{\Gamma_{i,j,t}}\!{}
    \bigl|\nabla|u|\bigr|^{2}\biggr)^{\frac{1}{2}}
    |\psi_{i,j}(t,y)-\psi_{i,j}(t,y')|^{\frac{1}{2}}\\
    &\le\frac{C(\Omega)\mathopen{}\left|\log(\varepsilon)\right|\mathclose{}
    ^{\frac{1}{2}}}
    {\varepsilon^{\frac{1}{8}}}
    |\psi_{i,j}(t,y)-\psi_{i,j}(t,y')|^{\frac{1}{2}}
\end{align*}
and hence
\begin{equation*}
    |\psi_{i,j}(t,y)-\psi_{i,j}(t,y')|
    \ge\frac{C(\Omega)\varepsilon^{\frac{1}{2}}}
    {\mathopen{}\left|\log(\varepsilon)\right|\mathclose{}}.
\end{equation*}
We conclude that the closest point along $\Gamma_{i,j,t}$ satisfying
$\bigl||u|-1\bigr|\le\frac{\varepsilon^{\frac{1}{8}}}{2}$ is
at least of distance
$\frac{C(\Omega)\varepsilon^{\frac{1}{2}}}
{\mathopen{}\left|\log(\varepsilon)\right|\mathclose{}}$
away and hence
\begin{equation*}
    \mathcal{H}^{1}\biggl(
    \Gamma_{i,j,t}\cap\Bigl\{\bigl||u|-1\bigr|
    >\frac{\varepsilon^{\frac{1}{8}}}{2}\Bigr\}\biggr)
    \ge\frac{C(\Omega)\varepsilon^{\frac{1}{2}}}
    {\mathopen{}\left|\log(\varepsilon)\right|\mathclose{}}.
\end{equation*}
From this we conclude that
\begin{align*}
    \int_{\mathcal{B}_{2,i,j,\varepsilon}}\int_{\Gamma_{i,j,t}}\!{}
    \frac{(1-|u|^{2})^{2}}{4\varepsilon^{2}}
    &\ge\int_{\mathcal{B}_{2,i,j,\varepsilon}}\int_{\Gamma_{i,j,t}}\!{}
    \frac{\bigl|1-|u|\bigr|^{2}}{4\varepsilon^{2}}\\
    &\ge\frac{\varepsilon^{\frac{1}{4}}}{16\varepsilon^{2}}
    \int_{\mathcal{B}_{2,i,j,\varepsilon}}\!{} \mathcal{H}^{1}\biggl(
    \Gamma_{i,j,t}\cap\Bigl\{\bigl||u|-1\bigr|
    >\frac{\varepsilon^{\frac{1}{8}}}{2}\Bigr\}\biggr)\\
    &\ge\frac{C(\Omega)\varepsilon^{\frac{3}{4}}}{\varepsilon^{2}
    \mathopen{}\left|\log(\varepsilon)\right|\mathclose{}}
    \cdot\mathcal{L}^{1}(\mathcal{B}_{2,i,j,\varepsilon}).
\end{align*}
By the coarea formula we have
\begin{equation*}
    \int_{\mathcal{B}_{2,i,j,\varepsilon}}\int_{\Gamma_{i,j,t}}\!{}
    \frac{(1-|u|^{2})^{2}}{4\varepsilon^{2}}
    \le\int_{\mathcal{U}_{i,j}}\!{}\frac{(1-|u|^{2})^{2}}
    {4\varepsilon^{2}}|\nabla(\psi_{i,j}^{-1})^{1}|
    \le{}C(\Omega)\mathopen{}\left|\log(\varepsilon)\right|\mathclose{}
\end{equation*}
and hence
\begin{equation*}
    \mathcal{L}^{1}(\mathcal{B}_{2,i,j,\varepsilon})\le{}
    C(\Omega)\varepsilon^{\frac{5}{4}}
    \mathopen{}\left|\log(\varepsilon)\right|^{2}\mathclose{}.
\end{equation*}
We set $\mathcal{G}_{2,i,j,\varepsilon}\coloneqq(-r_{1},r_{1})\setminus
\mathcal{B}_{2,i,j,\varepsilon}$.
Finally we set $\mathcal{G}_{i,j,\varepsilon}\coloneqq\mathcal{G}_{0,i,j,\varepsilon}\cap
\mathcal{G}_{1,i,j,\varepsilon}\cap\mathcal{G}_{2,i,j,\varepsilon}$ and
$\mathcal{B}_{i,j,\varepsilon}\coloneqq(-r_{1},r_{1})\setminus
\mathcal{G}_{i,j,\varepsilon}$.
We notice that
\begin{equation*}
    \mathcal{B}_{i,j,\varepsilon}
    \subseteq\mathcal{B}_{0,i,j,\varepsilon}\cup\mathcal{B}_{1,i,j,\varepsilon}
    \cup\mathcal{B}_{2,i,j,\varepsilon}
\end{equation*}
which has measure
\begin{equation*}
    \mathcal{L}^{1}(\mathcal{B}_{i,j})
    \le\mathcal{L}^{1}(\mathcal{B}_{0,i,j})
    +\mathcal{L}^{1}(\mathcal{B}_{1,i,j})
    +\mathcal{L}^{1}(\mathcal{B}_{2,i,j})
    \le{}C(\Omega)\varepsilon^{\frac{1}{4}}
\end{equation*}
and we have $\bigl||u|^{2}-1\bigr|\le{}C(\Omega)
\varepsilon^{\frac{1}{8}}$ on $\mathcal{G}_{i,j,\varepsilon}$.
\end{proof}

\begin{lemma}\label{lem:SmallBoundaryError}
    Suppose $u\in{}W^{1,2}(\Omega;\mathbb{R}^{2})$
    satisfies $E_{\varepsilon}(u)\le{}
    C\mathopen{}\left|\log(\varepsilon)\right|\mathclose{}$.
    Then
    \begin{equation*}
        \bigl\lVert|u|^{2}-1\bigr\rVert_{L^{2}(\partial\Omega)}
        \le{}C(\Omega)\varepsilon^{\frac{1}{16}}
        \mathopen{}\left|\log(\varepsilon)\right|\mathclose{}.
    \end{equation*}
\end{lemma}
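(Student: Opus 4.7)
The plan is to localize the $L^{2}(\partial\Omega)$-integral to the finite atlas of charts $\{(\mathcal{U}_{i,j},\psi_{i,j})\}$ from Section \ref{TangentNormalCoordinates}, then apply the slicing estimate of Lemma \ref{Containment} along each chart's normal direction to split the contribution into a ``good'' set on which $||u|^{2}-1|$ is pointwise small and a ``bad'' set whose measure is polynomially small in $\varepsilon$; on the bad set I would use H\"older together with trace and Sobolev bounds to compensate for the lack of any pointwise control on $|u|$.

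First, since $\psi_{i,j}(t,0)$ is an arclength parametrization of $(\partial\Omega)_{i}\cap\mathcal{U}_{i,j}$ (because $|\partial_{t}\psi_{i,j}(t,0)|=|\tau_{i}|=1$), covering $\partial\Omega$ by finitely many such charts reduces the $L^{2}$-norm on the boundary to finitely many integrals of the form $\int_{-r_{1}}^{r_{1}}(|u(\psi_{i,j}(t,0))|^{2}-1)^{2}\,dt$. Splitting the $t$-interval as $\mathcal{G}_{i,j,\varepsilon}\cup\mathcal{B}_{i,j,\varepsilon}$, the pointwise bound $||u(\psi_{i,j}(t,0))|^{2}-1|\le C(\Omega)\varepsilon^{1/8}$ furnished by Lemma \ref{Containment} on the good set yields a contribution of order $\varepsilon^{1/4}$. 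For the bad set, Cauchy--Schwarz combined with $\mathcal{L}^{1}(\mathcal{B}_{i,j,\varepsilon})\le C(\Omega)\varepsilon^{1/4}$ gives
\begin{equation*}
    \int_{\mathcal{B}_{i,j,\varepsilon}}\!\!(|u(\psi_{i,j}(t,0))|^{2}-1)^{2}\,dt
    \le\mathcal{L}^{1}(\mathcal{B}_{i,j,\varepsilon})^{1/2}
    \bigl\||u|^{2}-1\bigr\|_{L^{4}(\partial\Omega)}^{2}
    \le C\varepsilon^{1/8}\bigl\||u|^{2}-1\bigr\|_{L^{4}(\partial\Omega)}^{2}.
\end{equation*}

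To control $\||u|^{2}-1\|_{L^{4}(\partial\Omega)}$, the energy bound yields $\int_{\Omega}|\nabla u|^{2}\le 2E_{\varepsilon}(u)\le C|\log\varepsilon|$, while $\int_{\Omega}|u|^{2}\le|\Omega|+\||u|^{2}-1\|_{L^{2}(\Omega)}|\Omega|^{1/2}\le C$ since $\||u|^{2}-1\|_{L^{2}(\Omega)}\le C\varepsilon|\log\varepsilon|^{1/2}$. Hence $\|u\|_{W^{1,2}(\Omega)}^{2}\le C|\log\varepsilon|$, and the trace theorem combined with the two-dimensional Sobolev embedding $W^{1/2,2}(\partial\Omega)\hookrightarrow L^{p}(\partial\Omega)$ for every $p<\infty$ gives $\|u\|_{L^{8}(\partial\Omega)}\le C\|u\|_{W^{1,2}(\Omega)}\le C|\log\varepsilon|^{1/2}$, so $\||u|^{2}-1\|_{L^{4}(\partial\Omega)}\le\|u\|_{L^{8}(\partial\Omega)}^{2}+|\partial\Omega|^{1/4}\le C|\log\varepsilon|$.

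Summing over the finitely many charts then yields $\||u|^{2}-1\|_{L^{2}(\partial\Omega)}^{2}\le C(\varepsilon^{1/4}+\varepsilon^{1/8}|\log\varepsilon|^{2})\le C(\Omega)\varepsilon^{1/8}|\log\varepsilon|^{2}$ for small $\varepsilon$, and taking square roots produces the stated bound. The main obstacle I anticipate is the absence of an a priori pointwise bound on $|u|$, which would be the natural way to handle the bad set; the workaround is that in two dimensions the trace of a $W^{1,2}$ function lies in every $L^{p}(\partial\Omega)$ for $p<\infty$, which provides precisely the $L^{4}$ control needed to close the Cauchy--Schwarz estimate.
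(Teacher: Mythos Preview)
Your proposal is correct and follows essentially the same route as the paper: localize to charts, split each boundary integral into the good and bad sets from Lemma~\ref{Containment}, handle the good set pointwise, and on the bad set use H\"older against an $L^{4}(\partial\Omega)$ bound for $|u|^{2}-1$ coming from trace/Sobolev estimates. The only difference is cosmetic: the paper bounds $|u|^{2}-1$ directly in $W^{1,p}(\Omega)$ for $p=\tfrac{8}{5}$ and traces via $W^{1,p}(\Omega)\hookrightarrow L^{p/(2-p)}(\partial\Omega)=L^{4}(\partial\Omega)$, whereas you trace $u$ itself into $W^{1/2,2}(\partial\Omega)\hookrightarrow L^{8}(\partial\Omega)$ and square; both yield $\||u|^{2}-1\|_{L^{4}(\partial\Omega)}\le C|\log\varepsilon|$ and the same final exponent.
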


\begin{proof}
    By appealing to a partition of unity and compactness of $\partial\Omega$
    it suffices to verify
    \begin{equation*}
        \bigl\lVert|u|^{2}-1\bigr\rVert
        _{L^{2}(\psi_{i,j}((-r_{1},r_{1})\times\{0\})
        )}\le{}C(\Omega)\varepsilon^{\frac{1}{16}}
        \mathopen{}\left|\log(\varepsilon)\right|\mathclose{}
    \end{equation*}
    for an arbitrary $i=0,1,\ldots,b$ and $j=1,2,\ldots,N_{i}$.
    Suppose that we let $\mathcal{G}_{i,j,\varepsilon}\coloneqq(-r_{1},r_{1})
    \setminus\mathcal{B}_{i,j,\varepsilon}$ where
    $\mathcal{B}_{i,j,\varepsilon}$ is as in Lemma \ref{Containment}.
    It then suffices to estimate each of
    \begin{equation*}
        \int_{\psi_{i,j}(\mathcal{B}_{i,j,\varepsilon})}\!{}
        (|u|^{2}-1)^{2},\hspace{20pt}
        \int_{\psi_{i,j}(\mathcal{G}_{i,j,\varepsilon})}\!{}
        (|u|^{2}-1)^{2}.
    \end{equation*}
    Since $\bigl||u|-1\bigr|<\varepsilon^{\frac{1}{8}}$ on
    $\psi_{i,j}(\mathcal{G}_{i,j,\varepsilon})$ then
    \begin{equation}\label{eq:GoodPieceEstimate}
        \biggl|\int_{\psi_{i,j}(\mathcal{G}_{i,j,\varepsilon})}\!{}
        (|u|^{2}-1)^{2}\biggr|
        \le{}C(\Omega)\varepsilon^{\frac{1}{4}}.
    \end{equation}
    Since $(|u|^{2}-1)\in{}W^{1,p}(\Omega)$ for each $1\le{}p<2$ and
    since
    $W^{1,p}(\Omega)\hookrightarrow{}W^{1-\frac{1}{p},p}(\partial\Omega)
    \hookrightarrow{}
    L^{\frac{p}{2-p}}(\partial\Omega)$ for all $1\le{}p<2$ then for
$p=\frac{8}{5}$ we have
\begin{align*}
    \biggl|\int_{\psi_{i,j}(\mathcal{B}_{i,j,\varepsilon})}\!{}
    (|u|^{2}-1)^{2}\biggr|
    &\le(\mathcal{H}^{1}(\psi_{i,j}(\mathcal{B}_{i,j,\varepsilon})))
    ^{\frac{1}{2}}
    \biggl\||u_{\varepsilon_{k}}|^{2}-1\biggr\|_{L^{4}(\partial\Omega)}
    ^{2}\\
    &\le{}C(\Omega)
    (\mathcal{H}^{1}(\psi_{i,j}(\mathcal{B}_{i,j,\varepsilon})))
    ^{\frac{1}{2}}
    \mathopen{}\left|\log(\varepsilon)\right|\mathclose{}
    ^{2}.
\end{align*}
By Lemma \ref{Containment} and the fact that $\psi_{i,j}$ is Lipschitz we now
have
\begin{equation}\label{eq:BadPieceEstimate}
    \biggl|\int_{\psi_{i,j}(\mathcal{B}_{i,j,\varepsilon})}\!{}
    (|u|^{2}-1)^{2}\biggr|
    \le{}C(\Omega)
    \varepsilon^{\frac{1}{8}}
    \mathopen{}\left|\log(\varepsilon)\right|\mathclose{}^{2}.
\end{equation}
Putting together \eqref{eq:GoodPieceEstimate} and \eqref{eq:BadPieceEstimate}
gives the result.
\end{proof}

\section{Proof of Theorem \ref{ZerothOrderNormal}}\label{Goal}
\subsection{Proof of Compactness}
\par{}We proceed using the idea of extension by reflection in order to make use
of the compactness result from \cite{JS} on a slightly larger open set.
This approach is a modification of the ideas found in Propositions $3.1$ and $3.2$
of \cite{JMS} (see also Theorem $6.1$ of \cite{AlPo}).\\

\subsubsection{Step \texorpdfstring{$1$}{}:}\label{DualConvergence}

We use the $C^{1,1}$-coordinates
$\{(\mathcal{U}_{i,j},\psi_{i,j})\}
_{i=0,1,\ldots,b}^{j=1,2,\ldots,N_{i}}\cup
\{(\mathcal{U}_{0,0},\psi_{0,0})\}$
introduced in Section \ref{TangentNormalCoordinates} which extend to the
boundary.
We also recall that in Section \ref{TangentNormalCoordinates} we introduced
a partition of unity
$\{\widetilde{\rho}_{i,j}\}_{i=0,1,\ldots,b}^{j=1,2,\ldots,N_{i}}\cup
\{\widetilde{\rho}_{0,0}\}$
subordinate to the extended open cover.
Next, for $0\le{}i\le{}b$ with $1\le{}j\le{}N_{i}$ or $i=0$ with $j=0$,
and $k\ge1$, we use the functions $z_{i,j,\varepsilon_{k}}$ and
$w_{i,j,\varepsilon_{k}}$, defined in \eqref{def:zRep} and \eqref{def:wRep}.
Since
$w_{i,j,\varepsilon_{k}}^{1}(y_{1},0)=z_{i,j,\varepsilon_{k},\tau}(y_{1},0)=0$
for all $y_{1}\in(-r_{1},r_{1})$ and since this is a function defined on
$B_{r_{1},+}(0)$ then the extension results of Lemma \ref{Extension} apply.
After extending $w_{i,j,\varepsilon_{k}}$ by reflection,
perhaps passing to a subsequence, and applying Lemma \ref{ExtensionCompactness},
we may assume that
\begin{equation}\label{LocalFlatConvergence}
    \Biggl\lVert\star{}J(w_{i,j,\varepsilon_{k}})-
    \pi\sum_{p=1}^{M_{i,j,1}}d_{p}\delta_{y_{p}}
    -\frac{\pi}{2}\sum_{\ell=1}^{M_{i,j,2}}d_{\ell}\delta_{y_{\ell}}
    \Biggr\rVert_{\mathcal{A}_{\alpha,r_{1}}^{*}}\longrightarrow0^{+}
\end{equation}
for each $i=0,1,2,\ldots,b$ with $j=1,2,\ldots,N_{i}$ and each $0<\alpha\le1$.
By using Theorem $3.1$ of \cite{JS} and perhaps passing to a further subsequence
we may assume
\begin{equation}\label{InteriorFlatConvergence}
    \Biggl\lVert\star{}J(u_{\varepsilon_{k}})-
    \pi\sum_{l=1}^{M_{0,0,1}}d_{l}\delta_{x_{l}}
    \Biggr\rVert_{(C_{c}^{0,\alpha}(\mathcal{U}_{0,0}))^{*}}
    \longrightarrow0^{+}
\end{equation}
for all $0<\alpha\le1$.
We set
\begin{align*}
    V_{\Omega}&\coloneqq\bigcup_{\substack{i=0,1,\ldots,b\\ j=1,2,\ldots,N_{i}}}\bigcup_{p=1}^{M_{i,j,1}}\{\psi_{i,j}(y_{p})\}
    \cup\bigcup_{l=1}^{M_{0,0,1}}\{x_{l}\},\\
    V_{\partial\Omega}&\coloneqq\bigcup_{\substack{
    i=0,1,\ldots,b\\ j=1,2,\ldots,N_{i}}}
    \bigcup_{\ell=1}^{M_{i,j,2}}\{\psi_{i,j}(y_{\ell})\}
\end{align*}
which denote, respectively, the collection of interior and boundary vortices.
Note that there may have been duplicates due to overlapping charts but
$V_{\Omega}$ and $V_{\partial\Omega}$ consider such points only once.
Next, we set
\begin{equation*}
    J_{*}\coloneqq\pi\sum_{x\in{}V_{\Omega}}d_{x}\delta_{x}
    +\frac{\pi}{2}\sum_{x\in{}V_{\partial\Omega}}d_{x}\delta_{x}
\end{equation*}
where $d_{x}$ denotes the respective non-zero integer corresponding to
$x\in{}V_{\Omega}\sqcup{}V_{\partial\Omega}$.
We now show that $\star{}J(u_{\varepsilon_{k}})$ converges to $J_{*}$ in
$(C^{0,\alpha}(\Omega))^{*}$ for all $0<\alpha\le1$.
Let $\varphi\in{}C^{0,\alpha}(\Omega)$ and notice that since
\begin{equation*}
    \sum_{i=0}^{b}\sum_{j=1}^{N_{i}}\widetilde{\rho}_{i,j}
    +\widetilde{\rho}_{0,0}=1
    \hspace{15pt}\text{on }\overline{\Omega}
\end{equation*}
then
\begin{equation*}
    \bigl<\star{}J(u_{\varepsilon_{k}})-J_{*},\varphi\bigr>
    =\sum_{i=0}^{b}\sum_{j=1}^{N_{i}}
    \bigl<\star{}J(u_{\varepsilon_{k}})-J_{*},\widetilde{\rho}_{i,j}\varphi\bigr>
    +\bigl<\star{}J(u_{\varepsilon_{k}})-J_{*},\widetilde{\rho}_{0,0}\varphi\bigr>.
\end{equation*}
Since $\mathcal{U}_{0,0}\subset\subset\Omega$
\begin{align*}
    \bigl<\star{}J(u_{\varepsilon_{k}})-J_{*},\widetilde{\rho}_{0,0}\varphi\bigr>
    &=\biggl<\star{}J(u_{\varepsilon_{k}})
    -\pi\sum_{l=1}^{M_{0,0,1}}d_{\ell}\delta_{x_{\ell}},
    \widetilde{\rho}_{0,0}\varphi\biggr>\\
    &\le
    C(\Omega)
    \lVert\varphi\rVert_{C^{0,\alpha}(\Omega)}
    \Biggl\lVert\star{}J(u_{\varepsilon_{k}})-
    \pi\sum_{l=1}^{M_{0,0,1}}d_{l}\delta_{x_{l}}\Biggr\rVert_
    {(C_{c}^{0,\alpha}(\mathcal{U}_{0,0}))^{*}}
\end{align*}
where the last term tends to zero due to \eqref{InteriorFlatConvergence}.
For $0\le{}i\le{}b$ and $1\le{}j\le{}N_{i}$ we notice that since
$\mathcal{U}_{i,j}\cap(V_{\Omega}\sqcup{}V_{\partial\Omega})
=\biggl(\bigcup\limits_{p=1}^{M_{i,j,1}}\{\psi_{i,j}(y_{p})\}\biggr)\sqcup
\biggl(\bigcup\limits_{\ell=1}^{M_{i,j,2}}\{\psi_{i,j}(y_{\ell})\}\biggr)$ then
\begin{equation*}
    \bigl<\star{}J(u_{\varepsilon_{k}})-J_{*},\widetilde{\rho}_{i,j}\varphi\bigr>
    =\biggl<\star{}J(u_{\varepsilon_{k}})
    -\pi\sum_{p=1}^{M_{i,j,1}}d_{p}\delta_{\psi_{i,j}(y_{p})}
    -\frac{\pi}{2}\sum_{\ell=1}^{M_{i,j,2}}d_{\ell}\delta_{\psi_{i,j}(y_{\ell})},
    \widetilde{\rho}_{i,j}\varphi\biggr>.
\end{equation*}
Notice that
\begin{align*}
    \pi\sum_{p=1}^{M_{i,j,1}}d_{p}\delta_{\psi_{i,j}(y_{p})}
    +\frac{\pi}{2}\sum_{\ell=1}^{M_{i,j,2}}d_{\ell}\delta_{\psi_{i,j}(y_{\ell})}
    &=(\psi_{i,j})_{\#}\biggl(
    \pi\sum_{p=1}^{M_{i,j,1}}d_{p}\delta_{y_{p}}
    +\frac{\pi}{2}\sum_{\ell=1}^{M_{i,j,2}}d_{\ell}\delta_{y_{\ell}}\biggr),\\
    \star{}J(u_{\varepsilon_{k}})
    &=(\psi_{i,j})_{\#}\biggl(\star{}J(z_{i,j,\varepsilon_{k}})\biggr).
\end{align*}
Thus,
\begin{align*}
    &\biggl<\star{}J(u_{\varepsilon_{k}})
    -\pi\sum_{p=1}^{M_{i,j,1}}d_{p}\delta_{\psi_{i,j}(y_{p})}
    -\frac{\pi}{2}\sum_{\ell=1}^{M_{i,j,2}}d_{\ell}\delta_{\psi_{i,j}(y_{\ell})},
    \widetilde{\rho}_{i,j}\varphi\biggr>\\
    =&
    \biggl<\star{}J(z_{i,j,\varepsilon_{k}})
    -\pi\sum_{p=1}^{M_{i,j,1}}d_{p}\delta_{y_{p}}
    -\frac{\pi}{2}\sum_{\ell=1}^{M_{i,j,2}}d_{\ell}\delta_{y_{\ell}},
    (\widetilde{\rho}_{i,j}\varphi)\circ\psi_{i,j}\biggr>.
\end{align*}
Next we show, for each $i=0,1,\ldots,b$ and $j=1,2,\ldots,N_{i}$, that
\begin{equation}\label{Replacement}
    \lVert\star{}J(z_{i,j,\varepsilon_{k}})-\star{}J(w_{i,j,\varepsilon_{k}})
    \rVert_{\mathcal{A}_{\alpha,r_{1}}^{*}}\longrightarrow0^{+}
\end{equation}
as $k\to\infty$ for each $0<\alpha\le1$.
Observe that we can express $u_{\varepsilon_{k}}$ in
$\mathcal{U}_{i,j}$ as
\begin{equation*}
    u_{\varepsilon_{k}}(x)=u_{\varepsilon_{k}\tau}(x)
    \tau_{i}\bigl((\psi_{i,j}^{-1})^{1}(x)\bigr)
    +u_{\varepsilon_{k},\nu}(x)\nu_{i}\bigl((\psi_{i,j}^{-1})^{1}(x)\bigr)
\end{equation*}
where $u_{\varepsilon_{k},\tau}(x)$ and $u_{\varepsilon_{k},\nu}(x)$ are the
projections of $u_{\varepsilon_{k}}(x)$ onto the basis
$\{\tau_{i}\circ(\psi_{i,j}^{-1})^{1},\nu_{i}\circ(\psi_{i,j}^{-1})^{1}\}$
at the point $x$.
Next, since our coordinates $\psi_{i,j}$ are chosen so that
$\psi_{i,j}(y)=x$ then we have
\begin{equation}\label{ZRepresentation}
    z_{i,j,\varepsilon_{k}}
    =u_{\varepsilon_{k}}(\psi_{i,j}(y))
    =u_{\varepsilon_{k},\tau}(\psi_{i,j}(y))\tau_{i}(y_{1})
    +u_{\varepsilon_{k},\nu}(\psi_{i,j}(y))\nu_{i}(y_{1}).
\end{equation}
Taking partial derivatives of \eqref{ZRepresentation} and using
\eqref{DerivativeIdentities} gives
\begin{equation*}
    \begin{aligned}
    Jz_{i,j,\varepsilon_{k}}(y)=Jw_{i,j,\varepsilon_{k}}(y)
    &-\kappa_{i}(y_{1})z_{i,j,\varepsilon_{k},\tau}(y)
    \frac{\partial{}z_{i,j,\varepsilon_{k},\tau}}{\partial{}y_{2}}(y)\\
    &-\kappa_{i}(y_{1})z_{i,j,\varepsilon_{k},\nu}(y)
    \frac{\partial{}z_{i,j,\varepsilon_{k},\nu}}{\partial{}y_{2}}(y).
    \end{aligned}
\end{equation*}
which can be rewritten as:
\begin{align}
    Jz_{i,j,\varepsilon_{k}}(y)&=Jw_{i,j,\varepsilon_{k}}(y)
    -\kappa_{i}(y_{1})\frac{\partial}{\partial{}y_{2}}
    \biggl(\frac{(z_{i,j,\varepsilon_{k},\tau})^{2}
    +(z_{i,j,\varepsilon_{k},\nu})^{2}}{2}\biggr)
    \nonumber\\
    &=Jw_{i,j,\varepsilon_{k}}(y)
    -\kappa_{i}(y_{1})\frac{\partial}{\partial{}y_{2}}
    \biggl(\frac{|z_{i,j,\varepsilon_{k}}|^{2}}{2}\biggr).
    \label{JzRewrite}
\end{align}
First note that for $\phi\in\mathcal{A}_{1,r_{1}}$
\begin{align*}
    \int_{B_{r_{1},+}(0)}\!{}\phi(y)\bigl[Jz_{i,j,\varepsilon_{k}}(y)
    -Jw_{i,j,\varepsilon_{k}}(y)\bigr]
    &=-\int_{B_{r_{1},+}(0)}\!{}\phi(y)\kappa_{i}(y_{1})
    \frac{\partial}{\partial{}y_{2}}
    \biggl(\frac{|z_{i,j,\varepsilon_{k}}|^{2}}{2}\biggr)\\
    &=-\int_{B_{r_{1},+}(0)}\!{}\phi(y)\kappa_{i}(y_{1})
    \frac{\partial}{\partial{}y_{2}}
    \biggl(\frac{|z_{i,j,\varepsilon_{k}}|^{2}-1}{2}\biggr)\\
    &=(A).
\end{align*}
Integrating by parts gives
\begin{align*}
    (A)&=\int_{B_{r_{1},+}(0)}\!{}\kappa_{i}(y_{1})
    \frac{\partial\phi}{\partial{}y_{2}}(y)
    \cdot\frac{|z_{i,j,\varepsilon_{k}}|^{2}-1}{2}\\
    &-\int_{\partial{}B_{r_{1},+}(0)\cap\{y_{2}=0\}}\!{}
    \phi(y)\kappa_{i}(y_{1})
    \cdot\frac{|z_{i,j,\varepsilon_{k}}|^{2}-1}{2}\\
    &=(AA)+(AB).
\end{align*}
Next observe that since $\phi=0$ in a neighbourhood of
$\{(x,0):|x|<r_{1}\}$ we may estimate $(AA)$ as
\begin{align*}
    |(AA)|&\le
    C(\Omega)\lVert\phi\rVert_{\mathcal{A}_{1,r_{1}}}
    \biggl(\int_{B_{r_{1},+}(0)}\!{}
    \frac{(|z_{i,j,\varepsilon_{k}}|^{2}-1)^{2}}{4}\biggr)^{\frac{1}{2}}\\
    &=
    C(\Omega)\lVert\phi\rVert_{\mathcal{A}_{1,r_{1}}}
    \biggl(\int_{\psi_{i,j}(B_{r_{1},+}(0))}\!{}
    \frac{(|u_{\varepsilon_{k}}|^{2}-1)^{2}}{4}
    |J\psi_{i,j}^{-1}|\biggr)^{\frac{1}{2}}\\
    &\le
    C(\Omega)\lVert\phi\rVert_{\mathcal{A}_{1,r_{1}}}
    \varepsilon_{k}E_{\varepsilon_{k}}(u_{\varepsilon_{k}},\Omega)^{\frac{1}{2}}.
\end{align*}
To estimate $(AB)$ first note
\begin{equation*}
    |(AB)|
    \le{}
    C(\Omega)\lVert\phi\rVert_{\mathcal{A}_{1,r_{1}}}
    \int_{\partial{}B_{r_{1},+}(0)\cap\{y_{2}=0\}}\!{}
    \frac{\bigl||z_{i,j,\varepsilon_{k}}|^{2}-1\bigr|}{2}
\end{equation*}
and then we apply Lemma \ref{lem:SmallBoundaryError} after a coordinate
change.
The above then shows that
\begin{equation*}
    \lVert{}Jz_{i,j,\varepsilon_{k}}-Jw_{i,j,\varepsilon_{k}}\rVert
    _{\mathcal{A}_{1,r_{1}}^{*}}
    \le{}C(\Omega)\varepsilon_{k}^{\frac{1}{8}}.
\end{equation*}
Next observe that for $\phi\in{}C(B_{r_{1},+}(0))$ we have
\begin{equation*}
    |(A)|\le{}C(\Omega)\lVert\phi\rVert_{L^{\infty}(B_{r_{1},+}(0))}
    \bigl[\varepsilon_{k}E_{\varepsilon_{k}}(u_{\varepsilon_{k}},\Omega)
    +E_{\varepsilon_{k}}(u_{\varepsilon_{k}},\Omega)\bigr]
\end{equation*}
and hence
\begin{equation*}
    \lVert{}Jz_{i,j,\varepsilon_{k}}-Jw_{i,j,\varepsilon_{k}}\rVert{}_{(C(B_{r_{1},+}(0)))^{*}}
    \le{}C(\Omega)\mathopen{}\left|\log(\varepsilon_{k})\right|\mathclose{}.
\end{equation*}
By Lemma \ref{InterpolationEstimate}
we have for each $0<\alpha<1$ that
\begin{equation*}
    \lVert{}Jz_{i,j,\varepsilon_{k}}-Jw_{i,j,\varepsilon_{k}}\rVert
    _{\mathcal{A}_{\alpha,r_{1}}^{*}}
    \le{}C(\Omega)\varepsilon_{k}^{\frac{\alpha}{8(1+\alpha)}}
    \mathopen{}\left|\log(\varepsilon_{k})\right|\mathclose{}^
    {\frac{1}{1+\alpha}}.
\end{equation*}
Since $(\widetilde{\rho}_{i,j}\varphi)\circ\psi_{i,j}\in\mathcal{A}_{\alpha,r_{1}}$
then from the above
estimates we conclude \eqref{Replacement} by letting $k\to\infty$.
Thus, combined with the above it suffices to estimate
\begin{equation*}
    \biggl<\star{}J(w_{i,j,\varepsilon_{k}})
    -\pi\sum_{p=1}^{M_{i,j,1}}d_{p}\delta_{y_{p}}
    -\frac{\pi}{2}\sum_{\ell=1}^{M_{i,j,2}}d_{\ell}\delta_{y_{\ell}},
    (\widetilde{\rho}_{i,j}\varphi)\circ\psi_{i,j}\biggr>.
\end{equation*}
But this tends to zero due to \eqref{LocalFlatConvergence} since
$(\widetilde{\rho}_{i,j}\varphi)\circ\psi_{i,j}\in\mathcal{A}_{\alpha,r_{1}}$.\\

\subsubsection{Step \texorpdfstring{$2$}{}:}
\label{EulerCharacteristicCondition}
\par{}Now we show that if $\{u_{\varepsilon_{k}}\}_{k=1}^{\infty}$
denotes a convergent subsequence from Step $1$ and we have
\begin{equation}\label{eq:FlatConvergence}
    \Biggl\lVert\star{}J(u_{\varepsilon_{k}})
    -\pi\sum_{i=1}^{M_{1}}d_{i}\delta_{a_{i}}
    -\frac{\pi}{2}\sum_{j=0}^{b}
    \sum_{\ell=1}^{M_{2,j}}d_{j\ell}\delta_{c_{j\ell}}
    \Biggr\rVert_{(C^{0,\alpha}(\Omega))^{*}}\longrightarrow0^{+},
\end{equation}
for each $0<\alpha\le1$, where $a_{i}\in\Omega$ for $1\le{}i\le{}M_{1}$,
$c_{j\ell}\in\partial\Omega$ for $0\le{}j\le{}b$, $1\le{}\ell\le{}M_{2,j}$, and
$d_{i},d_{j\ell}\in\mathbb{Z}\setminus\{0\}$ then
\begin{equation*}
    \sum_{i=1}^{M_{1}}d_{i}
    +\frac{1}{2}\sum_{j=0}^{b}\sum_{\ell=1}^{M_{2,j}}d_{j\ell}=
    \chi_{Euler}(\Omega).
\end{equation*}

\noindent\underline{\bf{Substep 1}}\vspace{5pt}
\par{}Before we are able to demonstrate our desired result we first show that
\begin{equation*}
    \biggl|\int_{\partial\Omega}\!{}(ju_{\varepsilon_{k}}\cdot\tau)\varphi
    -\sum_{i=0}^{b}
    \int_{(\partial\Omega)_{i}}\!{}
    \kappa_{i}\varphi\biggr|\longrightarrow0^{+}
\end{equation*}
as $k\to\infty$ where $\varphi\in{}C^{0,\alpha}(\Omega)$ for $0<\alpha\le1$.
We use the atlas
$\{(\widetilde{\mathcal{U}}_{i,j},\widetilde{\psi}_{i,j})\}_{i=0,1,\ldots,b}
^{j=1,2,\ldots,N_{i}}$
from Section \ref{TangentNormalCoordinates}
to form a coordinate system for $\partial\Omega$.
We also use the partition of unity
$\{\widetilde{\rho}_{i,j}\}_{i=0,1,\ldots,b}^{j=1,2,\ldots,N_{i}}$ from Section
\ref{TangentNormalCoordinates}.
Specifically, we notice that $\partial\Omega\cap\widetilde{\mathcal{U}}_{i,j}$
is the image of the map
\begin{equation*}
    \gamma_{i,j}\colon{}(-r_{1},r_{1})\ni{}s
    \mapsto\widetilde{\psi}_{i,j}(s,0)=\gamma_{i}(s)
\end{equation*}
where $\gamma_{i}$, and hence $\gamma_{i,j}$, is an arclength parametrized curve.
We can then write
\begin{align*}
    \int_{\partial\Omega}\!{}(ju_{\varepsilon_{k}}\cdot\tau)\varphi
    =\sum_{i=0}^{b}\sum_{j=1}^{N_{i}}
    \int_{(\partial\Omega)_{i}\cap\widetilde{\mathcal{U}}_{i,j}}\!{}
    \Bigl(u_{\varepsilon_{k}}\times
    \frac{\partial{}u_{\varepsilon_{k}}}{\partial\tau}\Bigr)
    \widetilde{\rho}_{i,j}\varphi.
\end{align*}
Observe that since $(u_{\varepsilon_{k}})_{T}=0$ then
$u_{\varepsilon_{k}}=u_{\varepsilon_{k},\nu}\nu_{i}\circ(\widetilde{\psi}_{i,j}^{-1})^{1}$ along $(\partial\Omega)_{i}$ for each $i=0,1,\ldots,b$.
By \eqref{DerivativeIdentities}
we have using local coordinates that
\begin{equation*}
    \frac{\partial{}u_{\varepsilon_{k}}}{\partial\tau}
    =\frac{\partial{}z_{i,j,\varepsilon_{k},\nu}}{\partial{}y_{1}}(y_{1},0)
    \nu_{i}(y_{1})-
    z_{i,j,\varepsilon_{k},\nu}(y_{1},0)\kappa_{i}(y_{1})\tau_{i}(y_{1}).
\end{equation*}
Thus,
\begin{equation*}
    u_{\varepsilon_{k}}\times
    \frac{\partial{}u_{\varepsilon_{k}}}{\partial{}\tau}
    =|z_{i,j,\varepsilon_{k}}|^{2}(y_{1},0)\kappa_{i}(y_{1})
\end{equation*}
using that $z_{i,j,\varepsilon_{k},\tau}=0$ in the last equality.
From this we conclude that
\begin{align*}
    \int_{(\partial\Omega)_{i}\cap\widetilde{\mathcal{U}}_{i,j}}\!{}
    \Bigl(u_{\varepsilon_{k}}\times
    \frac{\partial{}u_{\varepsilon_{k}}}{\partial\tau}\Bigr)
    \widetilde{\rho}_{i,j}\varphi
    &=\int_{(\partial\Omega)_{i}\cap\widetilde{\mathcal{U}}_{i,j}}\!{}
    |u_{\varepsilon_{k}}|^{2}\widetilde{\kappa}_{i}
    \widetilde{\rho}_{i,j}\varphi\\
    &=\int_{(\partial\Omega)_{i}\cap\widetilde{\mathcal{U}}_{i,j}}\!{}
    \bigl(|u_{\varepsilon_{k}}|^{2}-1\bigr)
    \widetilde{\kappa}_{i}\widetilde{\rho}_{i,j}\varphi
    +\int_{\partial\Omega\cap\widetilde{\mathcal{U}}_{i,j}}\!{}
    \widetilde{\kappa}_{i}\widetilde{\rho}_{i,j}\varphi
\end{align*}
where $\widetilde{\kappa}_{i}=\kappa_{i}\circ(\widetilde{\psi}_{i,j}^{-1})^{1}$.
Summing in $j$ and using \eqref{DisjointPartitionOfUnity} we now have
\begin{align*}
    \int_{(\partial\Omega)_{i}}\!{}(ju_{\varepsilon_{k}}\cdot\tau)\varphi
    =\sum_{j=1}^{N_{i}}\int_{(\partial\Omega)_{i}\cap\widetilde{\mathcal{U}}_{i,j}}
    \!{}
    \bigl(|u_{\varepsilon_{k}}|^{2}-1\bigr)\widetilde{\kappa}_{i}\widetilde{\rho}_{i,j}\varphi
    +\int_{(\partial\Omega)_{i}}\!{}\widetilde{\kappa}_{i}\varphi.
\end{align*}
Now summing in $i$, with consideration to orientation,
and using that $(\partial\Omega)_{i}$ for $i=0,1,\ldots,b$
are disjoint we have
\begin{align*}
    \int_{\partial\Omega}\!{}(ju_{\varepsilon_{k}}\cdot\tau)\varphi
    &=
    \sum_{i=0}^{b}\sum_{j=1}^{N_{i}}
    \int_{(\partial\Omega)_{i}\cap\widetilde{\mathcal{U}}_{i,j}}\!{}
    \bigl(|u_{\varepsilon_{k}}|^{2}-1\bigr)\widetilde{\kappa}_{i}\widetilde{\rho}_{i,j}\varphi\\
    &+\int_{(\partial\Omega)_{0}}\!{}\widetilde{\kappa}_{0}\varphi
    +\sum_{i=1}^{b}\int_{(\partial\Omega)_{i}}\!{}\widetilde{\kappa}_{i}\varphi.
\end{align*}
To prove the desired result we note that
$L^{2}(\partial\Omega)\subseteq(C^{0,\alpha}(\partial\Omega))^{*}$
for each $0<\alpha\le1$ and apply Lemma \ref{lem:SmallBoundaryError} to
\begin{equation*}
    \sum_{i=0}^{b}\sum_{j=1}^{N_{i}}
    \int_{(\partial\Omega)_{i}\cap\widetilde{\mathcal{U}}_{i,j}}\!{}
    \bigl(|u_{\varepsilon_{k}}|^{2}-1\bigr)\widetilde{\kappa}_{i}\widetilde{\rho}_{i,j}\varphi.
\end{equation*}

\noindent\underline{\bf{Substep 2}}\vspace{5pt}
\par{}Now we show the desired conclusion.
Observe that integrating by parts gives
\begin{equation*}
    0=-\frac{1}{2}\int_{\Omega}\!{}ju_{\varepsilon_{k}}\cdot\nabla^{\perp}(1)
    =-\frac{1}{2}\int_{\partial\Omega}\!{}(ju_{\varepsilon_{k}}\cdot\tau)
    +\int_{\Omega}\!{}Ju_{\varepsilon_{k}}.
\end{equation*}
Since $\phi\equiv1$ is a member of $C^{0,\alpha}(\Omega)$ for each $0<\alpha\le1$ then the above work
shows that as $k\to\infty$ we have by the Gauss-Bonnet theorem, see
\cite{DoC}\footnote{Here, we use the regular region $R\subseteq\mathbb{R}^{3}$
given by $\iota(\Omega)$ where $\iota\colon\Omega\to{}R$ is given by
$\iota(x)=(x,0)$.}, that
\begin{equation*}
    \pi\sum_{i=1}^{M_{1}}d_{i}
    +\frac{\pi}{2}\sum_{j=1}^{b}\sum_{\ell=1}^{M_{2,j}}d_{j\ell}
    =\frac{1}{2}\int_{\partial\Omega}\!{}\widetilde{\kappa}
    =\pi\chi_{Euler}(\overline{\Omega})
    =\pi\chi_{Euler}(\Omega)
\end{equation*}
using that the Gaussian curvature of $\Omega$ is zero.
\subsubsection{Step \texorpdfstring{$3$}{}:}
\par{}Finally,
we show that for each $j=0,1,\ldots,b$ we have
\begin{equation*}
    \frac{1}{2}\sum_{\ell=1}^{M_{2,j}}d_{j\ell}\in\mathbb{Z}
\end{equation*}
where $d_{j\ell}$ is the degree associated to
$c_{j\ell}\in(\partial\Omega)_{j}$.
To do this, we use Lipschitz test functions and the convergence in
H\"{o}lder dual spaces to isolate a level set of the distance
to a boundary component of $\Omega$ for which the degree is defined and relate
it to one half of the sum of boundary degrees along this component.
Specifically, we proceed as follows:
\begin{enumerate}
    \item
        Following ideas from \cite{CJ} we first make use of a carefully chosen
        test function which allows us to use the dual convergence of the
        Jacobian in order to argue that on most level sets,
        $(\partial\Omega)_{j,t}$, of the distance to a boundary component we have
        that
        \begin{equation*}
            \frac{1}{2}\int_{(\partial\Omega)_{j,t}}\!{}ju\cdot{}t_{j}
            =\frac{\pi}{2}\sum_{\ell=1}^{M_{2,j}}d_{j\ell}+\pi+o(1).
        \end{equation*}
    \item
        By making use of a separate test function we can now use the work of
        \cite{JS} to show that for most level sets $(\partial\Omega)_{j,t}$
        we have
        \begin{equation*}
            \frac{1}{2}\int_{(\partial\Omega)_{j,t}}\!{}ju\cdot{}t_{j}
            =\pi\text{deg}(u,(\partial\Omega)_{j,t})+o(1).
        \end{equation*}
        Combined with the previous statement we obtain the desired
        statement for $\varepsilon>0$ sufficiently small.
\end{enumerate}

We now prove the desired statement.
We first show that
\begin{align}\label{eq:SlicingConvergence}
    \int_{0}^{r_{1}}\!
    \biggl|\frac{1}{2}\int_{(\partial\Omega)_{j,t}}\!{}ju\cdot{}t_{j}-
    \frac{\pi}{2}\sum_{k=1}^{M_{2,j}}d_{jk}-\pi\biggr|\mathrm{d}t
    &\le{}
    \left\|Ju-\pi\sum_{i=1}^{M_{1}}d_{i}\delta_{a_{i}}
    -\frac{\pi}{2}\sum_{j=0}^{b}\sum_{k=1}^{M_{2,j}}d_{jk}\delta_{c_{jk}}
    \right\|_{(C^{0,\alpha}(\Omega))^{*}}\nonumber\\
    &+\frac{1}{2}\biggl|
    \int_{(\partial\Omega)_{j}}\!{}ju\cdot{}\tau_{j}
    +2\pi\biggr|\nonumber\\
    &\eqqcolon{}\sigma(\varepsilon)
\end{align}
where $t_{j}$ is the tangent vector to $(\partial\Omega)_{j,t}$ so that
$\{\nu_{j},t_{j}\}$ is positively oriented.
As a result, we may conclude that for a subset of the levels sets of the distance
to a boundary component of size $r_{1}-\sqrt{s(\varepsilon)}$ we have
\begin{equation}\label{eq:MainObservation}
    \biggl|\frac{1}{2}\int_{(\partial\Omega)_{j,t}}\!{}ju\cdot{}t_{j}-
    \frac{\pi}{2}\sum_{k=1}^{M_{2,j}}d_{jk}-\pi\biggr|\le{}\sqrt{\sigma(\varepsilon)}
\end{equation}
and, hence, that ``most" slices have $\frac{1}{2}\int_{(\partial\Omega)_{j,t}}\!{}
ju\cdot{}t_{j}$ close to $\frac{\pi}{2}\sum\limits_{k=1}^{M_{2,j}}d_{jk}+\pi$.
To do this we use a technique similar to the one found in Lemma $1$ of \cite{CJ}.
We let
\begin{equation*}
    \varphi(x)\coloneqq\int_{\text{dist}(x,(\partial\Omega)_{j})}^{r_{1}}\!{}
    g(s)\mathrm{d}s,\hspace{15pt}
    g(s)\coloneqq
    \chi_{[0,r_{1}]}(s)\text{sgn}
    \biggl(\frac{1}{2}\int_{(\partial\Omega)_{j,s}}\!{}ju\cdot{}t_{j}-
    \frac{\pi}{2}\sum_{k=1}^{M_{2,j}}d_{jk}-\pi\biggr)
\end{equation*}
and notice that $\text{supp}(\varphi)\subseteq\overline{\Omega}_{j,r_{1}}$.
By Lemma \ref{Containment} and Lemma \ref{lem:SmallBoundaryError} we have,
using the partition of unity $\{\widetilde{\rho}_{j\ell}\}_{\ell=1}^{N_{j}}$
subordinate to $\{\widetilde{\mathcal{U}}_{j,\ell}\}_{\ell=1}^{N_{j}}$, that
\begin{equation*}
    \int_{0}^{r_{1}}\!{}
    \biggl|\frac{1}{2}\int_{(\partial\Omega)_{j,t}}\!{}ju\cdot{}t_{j}-
    \frac{\pi}{2}\sum_{k=1}^{M_{2,j}}d_{jk}-\pi\biggr|\mathrm{d}t
    =\int_{0}^{r_{1}}\!{}
    g(t)
    \biggl[\frac{1}{2}\int_{(\partial\Omega)_{j,t}}\!{}ju\cdot{}t_{j}-
    \frac{\pi}{2}\sum_{k=1}^{M_{2,j}}d_{jk}-\pi\biggr]\mathrm{d}t.
\end{equation*}
Next parametrizing $(\partial\Omega)_{j,t}\cap\mathcal{U}_{j,\ell}$ using
the mapping
$(-a_{j,\ell}^{t},a_{j,\ell}^{t})\ni{}y_{1}\mapsto{}\psi_{j,\ell}(y_{1},t)$,
we have
\begin{align*}
    &=\sum_{\ell=1}^{N_{j}}\int_{0}^{r_{1}}\!{}
    \frac{g(t)}{2}\int_{-a_{j\ell}^{t}}^{a_{j\ell}^{t}}\!{}(ju)(\psi_{j,\ell}(y_{1},t))\cdot
    t_{j}(\psi_{j,\ell}(y_{1},t))
    \widetilde{\rho}_{j,\ell}(\psi_{j,\ell}(y_{1},t))
    (1-t\kappa_{j}(y_{1}))\mathrm{d}y_{1}\mathrm{d}t\\
    &-
    \biggl(\frac{\pi}{2}\sum_{k=1}^{M_{2,j}}d_{jk}\biggr)
    \int_{0}^{r_{1}}\!{}g(t)\mathrm{d}t
    -\pi\int_{0}^{r_{1}}\!{}g(t)\mathrm{d}t.
\end{align*}
Notice that since $\varphi(c_{jk})=\int_{0}^{r_{1}}\!{}g(t)\mathrm{d}t$ for
each $k=1,2,\ldots,M_{2,j}$
\begin{equation*}
    -\biggl(\frac{\pi}{2}\sum_{k=1}^{M_{2,j}}d_{jk}\biggr)
    \int_{0}^{r_{1}}\!{}g(t)\mathrm{d}t
    -\pi\int_{0}^{r_{1}}\!{}g(t)\mathrm{d}t
    =-\frac{\pi}{2}\sum_{k=1}^{M_{2,j}}d_{jk}\varphi(c_{jk})
    -\pi\varphi\bigl((\partial\Omega)_{j}\bigr)
\end{equation*}
where $\varphi\bigl((\partial\Omega)_{j}\bigr)$ denotes the value of
$\varphi$ at any point along the boundary.
If $x\in\mathcal{U}_{j,\ell}$ then since
$\text{dist}(x,(\partial\Omega)_{j})=(\psi_{j,\ell}^{-1})^{2}(x)$ then we have
\begin{equation*}
    \nabla\varphi(x)=-g\bigl((\psi_{j,\ell}^{-1})^{2}(x)\bigr)
    \nabla(\psi_{j,\ell}^{-1})^{2}(x)
    =-g\bigl((\psi_{j,\ell}^{-1})^{2}(x)\bigr)
    \nu_{j}\bigl((\psi_{j,\ell}^{-1})^{1}(x)\bigr).
\end{equation*}
Hence, we have by the above and the Change of Variables Theorem that
\begin{align*}
    &\sum_{\ell=1}^{N_{j}}\int_{0}^{r_{1}}\!{}
    \frac{g(t)}{2}\int_{-a_{j\ell}^{t}}^{a_{j\ell}^{t}}\!{}
    t_{j}(y_{1})\cdot{}
    ju(\psi_{j,\ell}(y_{1},t))\widetilde{\rho}_{j,\ell}(\psi_{j,\ell}(y_{1},t))
    (1-t\kappa_{j}(y_{1}))
    \mathrm{d}y_{1}\mathrm{d}t\\
    &=\sum_{\ell=1}^{N_{j}}\frac{-1}{2}
    \int_{0}^{r_{1}}\int_{-a_{j\ell}^{t}}^{a_{j\ell}^{t}}\!{}
    (\nabla^{\perp}\varphi)(\psi_{j,\ell}(y_{1},t))\cdot{}ju(\psi_{j,\ell}(y_{1},t))
    \widetilde{\rho}_{j,\ell}(\psi_{j,\ell}(y_{1},t))(1-t\kappa_{j}(y_{1}))
    \mathrm{d}y_{1}\mathrm{d}t\\
    &=\sum_{\ell=1}^{N_{i}}\frac{-1}{2}\int_{\Omega_{j,r_{1}}}\!{}
    \nabla^{\perp}\varphi(x)\cdot{}ju(x)\widetilde{\rho}_{j,\ell}(x)\mathrm{d}x\\
    &=\frac{-1}{2}
    \int_{\Omega_{j,r_{1}}}\!{}\nabla^{\perp}\varphi(x)\cdot{}ju(x)
    \mathrm{d}x.
\end{align*}
Integrating by parts now gives that
\begin{align*}
    \frac{-1}{2}\int_{\Omega_{j,r_{1}}}\!{}\nabla^{\perp}\varphi(x)
    \cdot{}ju(x)\mathrm{d}x
    &=\frac{-1}{2}\int_{(\partial\Omega)_{j}}\!{}\varphi(x)[ju\cdot\tau_{j}]
    +\int_{\Omega_{j,r_{1}}}\!{}\varphi(x)\cdot{}Ju(x)\\
    &=\frac{-\varphi\bigl((\partial\Omega)_{j}\bigr)}{2}
    \int_{(\partial\Omega)_{j}}\!{}ju\cdot\tau_{j}
    +\int_{\Omega_{j,r_{1}}}\!{}\varphi(x)\cdot{}Ju(x).
\end{align*}
Rearranging our previous work we now have that
\begin{align*}
    \int_{0}^{r_{1}}\!\biggl|\frac{1}{2}\int_{(\partial\Omega)_{j,t}}\!{}
    ju\cdot{}t_{j}-\frac{\pi}{2}\sum_{k=1}^{M_{2,j}}d_{j,k}-\pi\biggr|
    \mathrm{d}t
    &=\biggl[\int_{\Omega_{j,r_{1}}}\!{}\varphi(x)\cdot{}Ju(x)
    -\frac{\pi}{2}\sum_{k=1}^{M_{2,j}}d_{jk}\varphi(c_{jk})\biggr]\\
    &-\frac{\varphi\bigl((\partial\Omega)_{j}\bigr)}{2}
    \biggl[\int_{(\partial\Omega)_{j}}\!{}ju\cdot{}\tau_{j}
    +2\pi\biggr].
\end{align*}
By the conclusion of Substep $1$ of Step $2$ as well as
\eqref{eq:FlatConvergence} we have \eqref{eq:SlicingConvergence}.\\

Next, we show that $\frac{1}{2}\int_{(\partial\Omega)_{j,t}}ju\cdot{}t_{j}$ is
close to an integer multiple of $\pi$ on most slices $(\partial\Omega)_{j,t}$.
To do this we apply the work of Jerrard and Soner from \cite{JS}.
We consider the test function
\begin{equation*}
    \varphi_{0}(x)\coloneqq
    \max\Bigl\{\frac{r_{1}}{2}
    -\text{dist}(x,(\partial\Omega)_{j,\frac{r_{1}}{2}}),0\Bigr\}.
\end{equation*}
From the proof of Theorem $2.1$ and equation 2.15 of \cite{JS} there is a set
$A\subseteq\bigl[0,\frac{r_{1}}{2}\bigr]$, taking $\lambda=\frac{3}{2}$, such
that
$\mathcal{L}^{1}\bigl(\bigl[0,\frac{r_{1}}{2}\bigr]\setminus{}A\bigr)\le{}C
\varepsilon^{\frac{1}{6}}$
and
\begin{equation*}
    \int_{A}\biggl|\frac{1}{2}\int_{(\partial\Omega)_{j,\frac{r_{1}}{2}\pm{}t}}
    \!{}ju\cdot{}t_{j}-
    \pi\text{deg}(u,(\partial\Omega)_{j,\frac{r_{1}}{2}\pm{}t})
    \biggr|\mathrm{d}t\le{}C\varepsilon\mathopen{}\left|\log(\varepsilon)\right|\mathclose{}.
\end{equation*}
We conclude that except on a set of measure
$C\varepsilon^{\frac{1}{6}}$ we have that
$\frac{1}{2}\int_{(\partial\Omega)_{j,t}}\!{}ju\cdot{}t_{j}$
is close to an integer multiple of $\pi$.
Combined with \eqref{eq:MainObservation} we conclude, for $\varepsilon>0$
sufficiently small, that
\begin{equation*}
    \frac{1}{2}\sum_{k=1}^{M_{2,j}}d_{jk}\in\mathbb{Z}.
\end{equation*}

\subsection{Proof of Lower Bound}
\par{}
Since $(C^{0,\alpha}(\Omega))^{*}\subseteq(C^{0,1}(\Omega))^{*}$
it suffices to consider the case when the Jacobians converge in
$(C^{0,1}(\Omega))^{*}$.
We may also suppose that
\begin{equation}\label{LogBound}
    E_{\varepsilon}(u_{\varepsilon},\Omega)
    \le{}\biggl[
    \pi\sum_{i=1}^{M_{1}}|d_{i}|+\frac{\pi}{2}\sum_{j=1}^{M_{2}}|d_{j}|
    \biggr]\mathopen{}\left|\log(\varepsilon)\right|\mathclose{}+1
\end{equation}
for all $\varepsilon\in(0,1]$ since if \eqref{LogBound} fails for a collection,
$\mathcal{C}$, of
$\varepsilon\in(0,1]$ then
\begin{align*}
    \liminf_{\varepsilon\to0^{+}}
    \frac{E_{\varepsilon}(u_{\varepsilon},\Omega)}
    {\mathopen{}\left|\log(\varepsilon)\right|\mathclose{}}
    &=\lim_{\varepsilon_{0}\to0^{+}}
    \inf_{0<\varepsilon\le\varepsilon_{0}}
    \frac{E_{\varepsilon}(u_{\varepsilon},\Omega)}
    {\mathopen{}\left|\log(\varepsilon)\right|\mathclose{}}\\
    &\ge\lim_{\varepsilon_{0}\to0^{+}}
    \min\biggl\{
    \inf_{\varepsilon\in(0,\varepsilon_{0}]\cap((0,1]\setminus\mathcal{C})}
    \frac{E_{\varepsilon}(u_{\varepsilon},\Omega)}
    {\mathopen{}\left|\log(\varepsilon)\right|\mathclose{}},
    \pi\sum_{i=1}^{M_{1}}|d_{i}|+\frac{\pi}{2}\sum_{j=1}^{M_{2}}|d_{j}|
    \biggr\}\\
    &=\min\biggl\{
    \liminf_{\varepsilon\in(0,1]\setminus\mathcal{C}}
    \frac{E_{\varepsilon}(u_{\varepsilon},\Omega)}
    {\mathopen{}\left|\log(\varepsilon)\right|\mathclose{}},
    \pi\sum_{i=1}^{M_{1}}|d_{i}|+\frac{\pi}{2}\sum_{j=1}^{M_{2}}|d_{j}|
    \biggr\}
\end{align*}
and hence it would suffice to prove the desired result among the
collection $(0,1]\setminus{}\mathcal{C}$ where \eqref{LogBound} holds.\\

\subsubsection{Step \texorpdfstring{$1$}{}:}
\par{}First we show that we can extend convergence to be in
$(C_{c}^{0,1}(\widetilde{\Omega}_{\frac{r_{1}}{2}}))^{*}$
where $\widetilde{\Omega}_{\frac{r_{1}}{2}}$ is a slightly
larger open set than $\Omega$ in $\mathbb{R}^{2}$.
The strategy in extending $u_{\varepsilon}$ will be by reflection as in
\cite{RaIg}.
More specifically, if
we let
\begin{equation*}
    \widetilde{\Omega}_{\frac{r_{1}}{2}}\coloneqq\Omega\cup
    \bigcup\limits_{i=0}^{b}\widetilde{\Omega}_{i,\frac{r_{1}}{2}},
\end{equation*}
where $\widetilde{\Omega}_{i,r}$ is defined in
\eqref{def:DomainiExt}, then we will show
\begin{equation*}
    \Bigl\lVert\star{}J(\widetilde{u}_{\varepsilon})-\widetilde{J}\Bigr\rVert
    _{(C_{c}^{0,1}(\widetilde{\Omega}_{\frac{r_{1}}{2}}))^{*}}
    \longrightarrow0^{+}.
\end{equation*}
where $\widetilde{u}_{\varepsilon}$ is an extension of $u_{\varepsilon}$ to
$\widetilde{\Omega}_{\frac{r_{1}}{2}}$ and $\widetilde{J}$ is an extension of
$J$ to $\widetilde{\Omega}_{\frac{r_{1}}{2}}$ as defined below.\\

We use the atlas,
$\{(\widetilde{\mathcal{U}}_{i,j},\widetilde{\psi}_{i,j})\}_{i=0,1,\ldots,b}
^{j=1,2,\ldots,N_{i}}\cup
\{(\widetilde{\mathcal{U}}_{0,0},\widetilde{\psi}_{0,0})\}$,
constructed in Section \ref{TangentNormalCoordinates}, which covers
$\widetilde{\Omega}_{\frac{r_{1}}{2}}$,
as well as the smooth partition of unity $\{\widetilde{\rho}_{i,j}\}_{i=0,1,\ldots,b}^{j=1,2,\ldots,N_{i}}
\cup\{\widetilde{\rho}_{0,0}\}$
subordinate to this cover.
We index the chart functions so that
$\text{supp}(\widetilde{\rho}_{i,j})\subseteq\widetilde{\mathcal{U}}_{i,j}$.
For each $i=0,1,\ldots,b$ and $j=1,2,\ldots,N_{i}$ we have
\begin{equation*}
    \text{supp}(J)\cap\widetilde{\mathcal{U}}_{i,j}=
    \biggl(\bigcup_{k=1}^{M_{i,j,1}}\{x_{k}^{i,j}\}\biggr)\cup
    \biggl(\bigcup_{\ell=1}^{M_{i,j,2}}\{x_{\ell}^{i,j}\}\biggr)
\end{equation*}
where $x_{k}^{i,j}\in\Omega$ for $k=1,2,\ldots,M_{i,j,1}$ and
$x_{\ell}^{i,j}\in(\partial\Omega)_{i}$
for $\ell=1,2,\ldots,M_{i,j,2}$.
We also have
\begin{equation*}
    \text{supp}(J)\cap\widetilde{\mathcal{U}}_{0,0}=
    \bigcup_{k=1}^{M_{0,0,1}}\{x_{k}^{0,0}\}.
\end{equation*}
For each $x_{k}^{i,j}$, where $k=1,2,\ldots,M_{i,j,1}$, we have
$x_{k}^{i,j}=\widetilde{\psi}_{i,j}(y_{k}^{i,j})$ for some
$y_{k}^{i,j}\coloneqq(y_{1,k}^{i,j},y_{2,k}^{i,j})
\in{}B_{\frac{r_{1}}{2},+}(0)$.
In addition, for each $x_{\ell}^{i,j}$, where $\ell=1,2,\ldots,M_{i,j,2}$,
we have $x_{\ell}^{i,j}=\widetilde{\psi}_{i,j}(y_{\ell}^{i,j})$ where
$y_{\ell}^{i,j}\coloneqq(y_{1,\ell}^{i,j},0)$.
For each $i=0,1,\ldots,b$ and $j=1,2,\ldots,N_{i}$ we let
$\bar{x}_{k}\in\widetilde{\Omega}_{\frac{r_{1}}{2}}$ denote
\begin{equation*}
    \bar{x}_{k}\coloneqq\widetilde{\psi}_{i,j}(\bar{y}_{k}^{i,j})
\end{equation*}
where $\bar{y}_{k}^{i,j}\coloneqq(y_{1,k}^{i,j},-y_{2,k}^{i,j})$.
Next we set
\begin{align*}
    V_{\Omega}&\coloneqq\biggl(\bigcup_{i=0}^{b}
    \bigcup_{j=1}^{N_{i}}\bigcup_{k=1}^{M_{i,j,1}}
    \{\widetilde{\psi}_{i,j}(y_{k}^{i,j})\}\biggr)
    \cup\biggl(\bigcup_{k=1}^{M_{0,0,1}}\{x_{k}\}\biggr)\\
    V_{\partial\Omega}&\coloneqq\bigcup_{i=0}^{b}\bigcup_{j=1}^{N_{i}}
    \bigcup_{\ell=1}^{M_{i,j,2}}
    \{\widetilde{\psi}_{i,j}(y_{1,\ell}^{i,j},0)\}\\
    V_{\widetilde{\Omega}_{\frac{r_{1}}{2}}\setminus\overline{\Omega}}&\coloneqq
    \bigcup_{i=0}^{b}\bigcup_{j=1}^{N_{i}}
    \bigcup_{k=1}^{M_{i,j,1}}
    \{\widetilde{\psi}_{i,j}(\bar{y}_{k}^{i,j})\}.
\end{align*}
\begin{center}
\begin{figure}[h!]
\begin{center}
\scalebox{1.4}{
\begin{tikzpicture}[use Hobby shortcut]  
\draw[rotate=-90,scale=2,postaction={decorate}] (3,0) .. +(1,0) .. +(1,2) .. +(1,3) .. 
+(0,3) .. (3,0);   
\draw[rotate=-90,dashed,scale=2,postaction={decorate}] (2.7,0) .. +(0.1,-0.1) .. +(0.2,-0.18)
.. +(0.3,-0.26) .. +(0.4,-0.32) .. +(0.55,-0.39) .. +(0.65,-0.43) .. +(0.72,-0.44) .. +(0.78,-0.445)
.. +(0.83,-0.447) .. +(0.88,-0.447) .. +(0.92,-0.445) .. +(0.96,-0.444) .. +(0.99,-0.441)
.. +(1.2,-0.36) .. +(1.5,-0.16) .. +(1.75,0.35) .. +(1.77,0.55) .. +(1.76,0.75) .. +(1.74,0.9)
.. +(1.69,1.1) .. +(1.64,1.3) .. +(1.59,1.5) .. +(1.55,1.65) .. +(1.53,1.8) .. +(1.55,2)
.. +(1.59,2.2) .. +(1.63,2.4) .. +(1.66,2.6) .. +(1.65,2.8) .. +(1.57,3) .. +(1.45,3.15)
.. +(1.34,3.25) .. +(1.23,3.31) .. +(1.12,3.35) .. +(1.03,3.37) .. +(0.94,3.38) .. +(0.85,3.38)
.. +(0.76,3.37) .. +(0.67,3.35) .. +(0.56,3.32) .. +(0.45,3.28) .. +(0.31,3.22) .. +(0.17,3.15)
.. +(0.03,3.06)
.. (2.7,0);
\fill (2.05,-4.23)node[right]{\tiny $V_{\widetilde{\Omega}_{\frac{r_{1}}{2}}\setminus\overline{\Omega}}$} circle [radius=1pt];
\fill (3.4,-6.5)node[right]{\tiny $V_{\Omega}$} circle [radius=1pt];
\fill (0,-8)node[right]{\tiny $V_{\partial\Omega}$} circle [radius=1pt];
\fill (2.3,-5.7)node[left]{\tiny $V_{\partial\Omega}$} circle [radius=1pt];
\draw[white](5.7,-7.5)node[right,black]{\small $\Omega$} to (5.7,-8);
\draw[white](1.7,-8.6)node[right,black]{\small $\widetilde{\Omega}$} to (3.7,-8);
\begin{scope}  
\clip[draw] (2.3,-5.7) .. (2.5,-5.2) .. (2.4,-5.3) .. (2.3,-5.7);
\fill[pattern=north east lines](2.5,-5.5) circle(1);
\end{scope}
\begin{scope}  
\clip[draw] (4.5,-7.5) .. (4.9,-7.45) .. (4.4,-7.25) .. (4.5,-7.5);
\fill[pattern=north east lines](4.5,-7.5) circle(1);
\end{scope}
\begin{scope}  
\clip[draw] (1.4,-6.5) coordinate (M) circle[radius=2mm];
\fill[pattern=north east lines](1.4,-6.5) circle(1);
\end{scope}
\end{tikzpicture}
}
\caption{Illustration of the vortex positions corresponding to $V_{\Omega}$, $V_{\partial\Omega}$, and
$V_{\widetilde{\Omega}_{\frac{r_{1}}{2}}}$.}
\label{Fig:VortexPositions}
\end{center}
\end{figure}
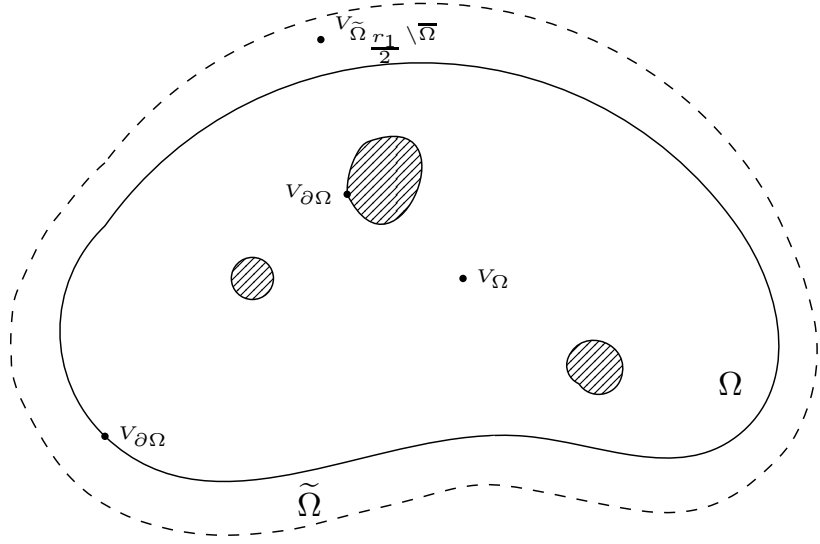
\end{center}
We then define $\widetilde{J}$ to be the measure given by
\begin{equation*}
    \widetilde{J}\coloneqq
    \pi\sum_{x\in{}V_{\Omega}\sqcup{}V_{\partial\Omega}}d_{x}\delta_{x}
    +\pi\sum_{\bar{x}\in{}
    V_{\widetilde{\Omega}_{\frac{r}{2}}\setminus\overline{\Omega}}}
    d_{\bar{x}}\delta_{\bar{x}}
\end{equation*}
where $d_{x}$ denotes the corresponding non-zero integer associated to $x$.
Next we extend $u_{\varepsilon}$ to a function on
$\widetilde{\Omega}_{\frac{r_{1}}{2}}$.
For each $i=0,1,\ldots,b$ and $j=1,2,\ldots,N_{i}$ we set
\begin{align*}
    \widetilde{z}_{i,j,\varepsilon}&\coloneqq{}
    u_{\varepsilon}\circ\widetilde{\psi}_{i,j}\\
    \widetilde{w}_{i,j,\varepsilon}&\coloneqq{}
    \widetilde{z}_{i,j,\varepsilon,\tau}e_{1}
    +\widetilde{z}_{i,j,\varepsilon,\nu}e_{2}\\
    \widetilde{Z}_{i,j,\varepsilon}(y_{1},y_{2})&\coloneqq
    \text{sgn}(y_{2})\widetilde{z}_{i,j,\varepsilon,\tau}(y_{1},|y_{2}|)
    \widetilde{\tau}_{i,j}
    +\widetilde{z}_{i,j,\varepsilon,\nu}(y_{1},|y_{2}|)\widetilde{\nu}_{i,j}
\end{align*}
where $\widetilde{z}_{i,j,\varepsilon,\tau}$ and
$\widetilde{z}_{i,j,\varepsilon,\nu}$ denote,
respectively, the components of
$\widetilde{z}_{i,j,\varepsilon}$ in the $\tau$ and $\nu$
directions and where $\widetilde{\tau}_{i,j}$ and $\widetilde{\nu}_{i,j}$ are
\begin{equation*}
    \widetilde{\tau}_{i,j}\coloneqq
    \tau_{i,j}((\widetilde{\psi}_{i,j}^{-1})^{1}(x)),
    \hspace{15pt}
    \widetilde{\nu}_{i,j}\coloneqq
    \nu_{i,j}((\widetilde{\psi}_{i,j}^{-1})^{1}(x)).
\end{equation*}
We define the extension $\widetilde{u}$ by
\begin{equation*}
    \widetilde{u}(x)=
    \begin{cases}
        u(x)& \text{if }x\in\overline{\Omega}\\
        \widetilde{Z}_{i,j,\varepsilon}(\widetilde{\psi}_{i,j}^{-1}(x))
        & \text{if } (\widetilde{\psi}_{i,j}^{-1})^{2}(x)<0.
    \end{cases}
\end{equation*}
This is well defined since if $x\in\partial\Omega$ then there is a local chart
$\widetilde{\psi}_{i,j}$ such that $x=\widetilde{\psi}_{i,j}(y_{1},0)$ and
\begin{equation*}
    u_{\varepsilon}(x)=u_{\varepsilon}\circ\widetilde{\psi}_{i,j}(y_{1},0)
    =z_{i,j,\varepsilon}(y_{1},0)
    =z_{i,j,\varepsilon,\nu}(y_{1},0)\widetilde{\nu}_{i,j}
    =z_{i,j,\varepsilon,\nu}\bigl(\widetilde{\psi}_{i,j}^{-1}(x)\bigr)
    \widetilde{\nu}_{i,j}
\end{equation*}
where we have used that $z_{i,j,\varepsilon,\tau}=0$ on $\partial\Omega$.
We now claim that
\begin{equation*}
    \Bigl\lVert\star{}J(\widetilde{u}_{\varepsilon})-\widetilde{J}\Bigr\rVert
    _{(C_{c}^{0,1}(\widetilde{\Omega}_{\frac{r_{1}}{2}}))^{*}}\longrightarrow0^{+}.
\end{equation*}
Let $\varphi\in{}C_{c}^{0,1}(\widetilde{\Omega}_{\frac{r_{1}}{2}})$
satisfy $\lVert\varphi\rVert
_{C_{c}^{0,1}(\widetilde{\Omega}_{\frac{r_{1}}{2}})}\le1$
and observe that
\begin{equation*}
    \bigl<\star{}J(\widetilde{u}_{\varepsilon})-\widetilde{J},\varphi\bigr>
    =\sum_{i=0}^{b}\sum_{j=1}^{N_{i}}
    \bigl<\star{}J(\widetilde{u}_{\varepsilon})-\widetilde{J},
    \widetilde{\rho}_{i,j}\varphi\bigr>
    +\bigl<\star{}J(\widetilde{u}_{\varepsilon})-\widetilde{J},
    \widetilde{\rho}_{0,0}\varphi\bigr>.
\end{equation*}
For $i=0$ and $j=0$ we have
\begin{equation*}
    \bigl<\star{}J(\widetilde{u}_{\varepsilon})-\widetilde{J},
    \widetilde{\rho}_{0,0}\varphi\bigr>
    =\bigl<\star{}J(u_{\varepsilon})-J,\widetilde{\rho}_{0,0}\varphi\bigr>
    \le{}C(\Omega)
    \lVert\star{}J(u_{\varepsilon})-J\rVert_{(C_{c}^{0,1}(\Omega))^{*}}.
\end{equation*}
For $i=0,1,\ldots,b$ and $j=1,2,\ldots,N_{i}$ we have
\begin{align*}
    \bigl<\widetilde{J},\rho_{i,j}\varphi\bigr>
    &=\biggl<\pi\sum_{k=1}^{M_{i,j,1}}
    d_{k}^{i,j}(\delta_{x_{k}^{i,j}}+\delta_{\bar{x}_{k}^{i,j}})
    +\pi\sum_{\ell=1}^{M_{i,j,2}}d_{\ell}^{i,j}\delta_{x_{\ell}^{i,j}},
    \widetilde{\rho}_{i,j}\varphi\biggr>\\
    &=\biggl<(\widetilde{\psi}_{i,j})_{\#}\biggl(
    \pi\sum_{k=1}^{M_{i,j,1}}d_{k}^{i,j}(\delta_{y_{k}^{i,j}}
    +\delta_{\bar{y}_{k}^{i,j}})
    +\pi\sum_{\ell=1}^{M_{i,j,2}}d_{\ell}^{i,j}\delta_{y_{\ell}^{i,j}}\biggr),
    \widetilde{\rho}_{i,j}\varphi\biggr>.
\end{align*}
Next observe that
\begin{align*}
    \bigl<\star{}J(\widetilde{u}_{\varepsilon}),\widetilde{\rho}_{i,j}\varphi\bigr>
    =\bigl<(\widetilde{\psi}_{i,j})
    _{\#}(\star{}J(\widetilde{u}\circ\widetilde{\psi}_{i,j})),
    \widetilde{\rho}_{i,j}\varphi\bigr>
\end{align*}
and hence
\begin{align*}
    \bigl<\star{}J(\widetilde{u}_{\varepsilon})-\widetilde{J},
    \widetilde{\rho}_{i,j}\varphi\bigr>
    &=\biggl<(\widetilde{\psi}_{i,j})
    _{\#}\biggl(\star{}J(\widetilde{u}\circ\widetilde{\psi}_{i,j})
    -\pi\sum_{k=1}^{M_{i,j,1}}d_{k}^{i,j}(\delta_{y_{k}^{i,j}}
    +\delta_{\bar{y}_{k}^{i,j}})
    -\pi\sum_{\ell=1}^{M_{i,j,2}}d_{\ell}^{i,j}\delta_{y_{\ell}^{i,j}}\biggr),
    \widetilde{\rho}_{i,j}\varphi\biggr>\\
    &=\biggl<\star{}J(\widetilde{u}\circ\widetilde{\psi}_{i,j})
    -\pi\sum_{k=1}^{M_{i,j,1}}d_{k}^{i,j}(\delta_{y_{k}^{i,j}}
    +\delta_{\bar{y}_{k}^{i,j}})
    -\pi\sum_{\ell=1}^{M_{i,j,2}}d_{\ell}^{i,j}\delta_{y_{\ell}^{i,j}},
    (\widetilde{\rho}_{i,j}\varphi)\circ\widetilde{\psi}_{i,j}\biggr>.
\end{align*}
We notice that
\begin{equation*}
    \widetilde{u}\circ\widetilde{\psi}_{i,j}(y_{1},y_{2})=
    \begin{cases}
        z_{i,j,\varepsilon,\tau}(y_{1},y_{2})\tau_{i,j}(y_{1})
        +z_{i,j,\varepsilon,\nu}(y_{1},y_{2})\nu_{i,j}(y_{1})&
        \text{if }y_{2}\ge0\\
        -z_{i,j,\varepsilon,\tau}(y_{1},-y_{2})\tau_{i,j}(y_{1})
        +z_{i,j,\varepsilon,\nu}(y_{1},-y_{2})\nu_{i,j}(y_{1})&
        \text{if }y_{2}<0.
    \end{cases}
\end{equation*}
Repeating the calculation from \eqref{JzRewrite}
we have
\begin{align*}
    &J(\widetilde{u}\circ\widetilde{\psi}_{i,j})(y_{1},y_{2})\\
    =&
    \begin{cases}
        Jw_{i,j,\varepsilon}(y_{1},y_{2})
        -\kappa_{i}(y_{1})
        \frac{\partial}{\partial{}y_{2}}
        \biggl(\frac{|z_{i,j,\varepsilon}(y_{1},y_{2})|^{2}-1}{2}\biggr)
        & \text{if }y_{2}\ge0\\
        Jw_{i,j,\varepsilon}(y_{1},-y_{2})
        +\kappa_{i}(y_{1})
        \frac{\partial}{\partial{}y_{2}}
        \biggl(\frac{|z_{i,j,\varepsilon}(y_{1},-y_{2})|^{2}-1}{2}\biggr)
        & \text{if }y_{2}<0.
    \end{cases}
\end{align*}
Combined with our previous calculations we have that
\begin{align*}
    &\biggl<\star{}J(\widetilde{u}\circ\widetilde{\psi}_{i,j})
    -\pi\sum_{k=1}^{M_{i,j,1}}d_{k}^{i,j}(\delta_{y_{k}^{i,j}}
    +\delta_{\bar{y}_{k}^{i,j}})
    -\pi\sum_{\ell=1}^{M_{i,j,2}}d_{\ell}^{i,j}\delta_{y_{\ell}^{i,j}},
    (\widetilde{\rho}_{i,j}\varphi)\circ\widetilde{\psi}_{i,j}\biggr>\\
    =&\biggl<\star{}J(w_{i,j,\varepsilon})(y_{1},|y_{2}|)
    -\pi\sum_{k=1}^{M_{i,j,1}}d_{k}^{i,j}(\delta_{y_{k}^{i,j}}
    +\delta_{\bar{y}_{k}^{i,j}})
    -\pi\sum_{\ell=1}^{M_{i,j,2}}d_{\ell}^{i,j}\delta_{y_{\ell}^{i,j}},
    (\widetilde{\rho}_{i,j}\varphi)\circ\widetilde{\psi}_{i,j}\biggr>\\
    -&\biggl<
    \text{sgn}(y_{2})\kappa_{i}(y_{1})
    \frac{\partial}{\partial{}y_{2}}\biggl(
    \frac{|z_{i,j,\varepsilon}|^{2}-1}{2}\biggr),
    (\widetilde{\rho}_{i,j}\varphi)\circ\widetilde{\psi}_{i,j}\biggr>.
\end{align*}
By symmetry and the argument from Step $1$ of the proof of \eqref{Compactness}
we find that
\begin{equation*}
    \left\|
    \text{sgn}(y_{2})\kappa_{i}(y_{1})
    \frac{\partial}{\partial{}y_{2}}\biggl(
    \frac{|z_{j,\varepsilon}|^{2}-1}{2}\biggr)\right\|
    _{(C_{c}^{0,1}(B_{\frac{r_{1}}{2}}(0)))^{*}}\longrightarrow0^{+}
\end{equation*}
as $\varepsilon\to0^{+}$.
Since $(\widetilde{\rho}_{i,j}\varphi)\circ\widetilde{\psi}_{i,j}\in{}
C_{c}^{0,1}(B_{\frac{r_{1}}{2}}(0))$
then it suffices to estimate the remaining term.
We notice that we can write
\begin{equation*}
    (\widetilde{\rho}_{i,j}\varphi)\circ\widetilde{\psi}_{i,j}
    =[(\widetilde{\rho}_{i,j}\varphi)\circ\widetilde{\psi}_{i,j}]_{e}
    +[(\widetilde{\rho}_{i,j}\varphi)\circ\widetilde{\psi}_{i,j}]_{o}
\end{equation*}
where for a function $f\colon{}B_{\frac{r_{1}}{2}}(0)\to\mathbb{R}^{2}$ we set
\begin{equation*}
    f_{e}(x_{1},x_{2})\coloneqq\frac{f(x_{1},x_{2})+f(x_{1},-x_{2})}{2},
    \hspace{15pt}
    f_{o}(x_{1},x_{2})\coloneqq\frac{f(x_{1},x_{2})-f(x_{1},-x_{2})}{2}.
\end{equation*}
That is, $f_{e}$ and $f_{o}$ denote, respectively, the even and odd part of $f$
in the second component.
We also observe that
\begin{equation*}
    \left\|f_{e}\right\|_{\mathcal{A}_{1,\frac{r_{1}}{2}}}=
    \left\|f_{e}\right\|_{C_{c}^{0,1}(B_{\frac{r_{1}}{2}}(0))}
    =\left\|f\right\|_{C_{c}^{0,1}(B_{\frac{r_{1}}{2}}(0))}
    .
\end{equation*}
Since $Jw_{i,j,\varepsilon}(y_{1},|y_{2}|)$ and the sum of delta masses are
even in $y_{2}$ and $B_{\frac{r_{1}}{2}}(0)$
is symmetric with respect to reflection in the $y_{2}$ variable then
\begin{align*}
    &\biggl<\star{}J(w_{i,j,\varepsilon})(y_{1},|y_{2}|)
    -\pi\sum_{k=1}^{M_{i,j,1}}d_{k}^{i,j}(\delta_{y_{k}^{i,j}}
    +\delta_{\bar{y}_{k}^{i,j}})
    -\pi\sum_{\ell=1}^{M_{i,j,2}}d_{\ell}^{i,j}\delta_{y_{\ell}^{i,j}},
    (\widetilde{\rho}_{i,j}\varphi)\circ\widetilde{\psi}_{i,j}\biggr>\\
    =&\biggl<\star{}J(w_{i,j,\varepsilon})(y_{1},|y_{2}|)
    -\pi\sum_{k=1}^{M_{i,j,1}}d_{k}^{i,j}(\delta_{y_{k}^{i,j}}
    +\delta_{\bar{y}_{k}^{i,j}})
    -\pi\sum_{\ell=1}^{M_{i,j,2}}d_{\ell}^{i,j}\delta_{y_{\ell}^{i,j}},
    [(\widetilde{\rho}_{i,j}\varphi)\circ\widetilde{\psi}_{i,j}]_{e}\biggr>\\
    =&2\biggl<\star{}J(w_{i,j,\varepsilon})(y_{1},y_{2})
    -\pi\sum_{k=1}^{M_{i,j,1}}d_{k}^{i,j}\delta_{y_{k}^{i,j}}
    -\frac{\pi}{2}\sum_{\ell=1}^{M_{i,j,2}}d_{\ell}^{i,j}\delta_{y_{\ell}^{i,j}},
    [(\widetilde{\rho}_{i,j}\varphi)\circ\widetilde{\psi}_{i,j}]_{e}\biggr>
\end{align*}
where the last equality uses symmetry.
In particular, the last equality is considered over $B_{\frac{r_{1}}{2},+}(0)$.
But we then note that for any $\phi\in{}\mathcal{A}_{1,\frac{r_{1}}{2}}$
with $\left\|\phi\right\|_{\mathcal{A}_{1,\frac{r_{1}}{2}}}\le1$
we have
\begin{align*}
    &\biggl<\star{}J(w_{i,j,\varepsilon})(y_{1},y_{2})
    -\pi\sum_{k=1}^{M_{i,j,1}}d_{k}^{i,j}\delta_{y_{k}^{i,j}}
    -\frac{\pi}{2}\sum_{\ell=1}^{M_{i,j,2}}d_{\ell}^{i,j}\delta_{y_{\ell}^{i,j}},
    \phi\biggr>\\
    \le&\biggl<\star{}J(z_{i,j,\varepsilon})(y_{1},y_{2})
    -\pi\sum_{k=1}^{M_{i,j,1}}d_{k}^{i,j}\delta_{y_{k}^{i,j}}
    -\frac{\pi}{2}\sum_{\ell=1}^{M_{i,j,2}}d_{\ell}^{i,j}\delta_{y_{\ell}^{i,j}},
    \phi\biggr>
    +\left\|\star{}J(w_{i,j,\varepsilon})-\star{}J(z_{i,j,\varepsilon})\right\|
    _{\mathcal{A}_{\frac{1,r_{1}}{2}}^{*}}
    \\
    =&\biggl<\star{}J(u_{\varepsilon})
    -\pi\sum_{k=1}^{M_{i,j,1}}d_{k}^{i,j}\delta_{x_{k}^{i,j}}
    -\frac{\pi}{2}\sum_{\ell=1}^{M_{i,j,2}}d_{\ell}^{i,j}\delta_{x_{\ell}^{i,j}},
    \phi\circ\widetilde{\psi}_{i,j}^{-1}\biggr>
    +\left\|\star{}J(w_{i,j,\varepsilon})-\star{}J(z_{i,j,\varepsilon})\right\|
    _{\mathcal{A}_{1,\frac{r_{1}}{2}}^{*}}
    \\
    \le&C(\Omega)\Biggl\|\star{}J(u_{\varepsilon})
    -\pi\sum_{k=1}^{M_{i,j,1}}d_{k}^{i,j}\delta_{x_{k}^{i,j}}
    -\frac{\pi}{2}\sum_{\ell=1}^{M_{i,j,2}}d_{\ell}^{i,j}
    \delta_{x_{\ell}^{i,j}}\Biggr\|
    _{(C^{0,1}(\Omega))^{*}}
    +\left\|\star{}J(w_{j,\varepsilon})-\star{}J(z_{j,\varepsilon})\right\|
    _{\mathcal{A}_{1,\frac{r_{1}}{2}}^{*}}
\end{align*}
where to obtain the last inequality we used that
$\phi\circ\widetilde{\psi}_{i,j}^{-1}$ is zero in a neighbourhood of
$\partial\widetilde{\mathcal{U}}_{i,j}\cap\Omega$.
The first term tends to zero by assumption while the second term
tends to zero by a similar argument to Step $1$ of \eqref{Compactness}.
Since $\widetilde{\rho}_{i,j}\circ\widetilde{\psi}_{i,j}=0$ in a neighbourhood of
$\partial{}B_{\frac{r_{1}}{2}}(0)$ then
$[(\widetilde{\rho}_{i,j}\varphi)\circ\widetilde{\psi}_{i,j}]_{e}\in\mathcal{A}_{1,\frac{r_{1}}{2}}$
and we obtain the
desired convergence since $i=0,1,\ldots,b$ and $j=1,2,\ldots,N_{i}$ was
arbitrary.
Thus, we now have
\begin{equation*}
    \left\|\star{}J(\widetilde{u}_{\varepsilon})-\widetilde{J}\right\|
    _{(C_{c}^{0,1}(\widetilde{\Omega}_{\frac{r_{1}}{2}}))^{*}}\longrightarrow0^{+}.
\end{equation*}
\subsubsection{Step \texorpdfstring{$2$}{}:}
\par{}Fix $0<r<r_{1}$.
By the previous step it follows that if
$\mathcal{O}_{\frac{r}{2}}\coloneqq\Omega\setminus
\biggl[\Omega\cap\bigcup\limits_{i=0}^{b}\widetilde{\Omega}_{i,\frac{r}{2}}\biggr]$
then
\begin{equation*}
    \left\|\star{}J(u_{\varepsilon})-J\right\|
    _{(C_{c}^{0,1}(\mathcal{O}_{\frac{r}{2}}))^{*}}
    \longrightarrow0^{+}
\end{equation*}
using that $\widetilde{u}=u$ and $\widetilde{J}=J$ on $\Omega$.
By Theorem $4.1$ of \cite{JS} or Theorem $1.1$ of \cite{AlBaOr} we have
\begin{equation}\label{InteriorLowerBound}
    \liminf_{\varepsilon\to0^{+}}\frac{E_{\varepsilon}(u_{\varepsilon},
    \mathcal{O}_{\frac{r}{2}})}
    {\mathopen{}\left|\log(\varepsilon)\right|\mathclose{}}
    \ge\pi\sum_{x\in\mathcal{O}_{\frac{r}{2}}}
    |d_{x}|.
\end{equation}
Similarly, we also have that the previous step gives
\begin{equation*}
    \left\|\star{}J(\widetilde{u}_{\varepsilon})-\widetilde{J}\right\|
    _{(C_{c}^{0,1}(\widetilde{\Omega}_{\frac{r}{2}}))^{*}}
    \longrightarrow0^{+},
\end{equation*}
where $\widetilde{\Omega}_{\frac{r}{2}}\coloneqq
\bigcup_{i=0}^{b}\widetilde{\Omega}_{i,\frac{r}{2}}$,
and hence
\begin{equation*}
    \liminf_{\varepsilon\to0^{+}}\frac{E_{\varepsilon}(\widetilde{u}_{\varepsilon},
    \widetilde{\Omega}_{\frac{r}{2}})}
    {\mathopen{}\left|\log(\varepsilon)\right|\mathclose{}}
    \ge2\pi\sum_{x\in\Omega_{\frac{r}{2}}}
    |d_{x}|+
    \pi\sum_{x\in\partial\Omega}|d_{x}|.
\end{equation*}
Computing in local coordinates shows that
\begin{equation*}
    E_{\varepsilon}(\widetilde{u}_{\varepsilon},
    \widetilde{\Omega}_{\frac{r}{2}})
    \le2\Bigl(1+C(\Omega)r\Bigr)
    E_{\varepsilon}(u_{\varepsilon},\Omega_{\frac{r}{2}})
\end{equation*}
and hence we obtain
\begin{equation*}
    2(1+C(\Omega)r)\liminf_{\varepsilon\to0^{+}}
    \frac{E_{\varepsilon}(u_{\varepsilon},
    \Omega_{\frac{r}{2}})}
    {\mathopen{}\left|\log(\varepsilon)\right|\mathclose{}}
    \ge2\pi\sum_{x\in\Omega_{\frac{r}{2}}}
    |d_{x}|+
    \pi\sum_{x\in\partial\Omega}|d_{x}|
\end{equation*}
or equivalently
\begin{equation}\label{LiminfNeighbourhood}
    (1+C(\Omega)r)\liminf_{\varepsilon\to0^{+}}
    \frac{E_{\varepsilon}(u_{\varepsilon},
    \Omega_{\frac{r}{2}})}
    {\mathopen{}\left|\log(\varepsilon)\right|\mathclose{}}
    \ge\pi\sum_{x\in\Omega_{\frac{r}{2}}}
    |d_{x}|+
    \frac{\pi}{2}\sum_{x\in\partial\Omega}|d_{x}|.
\end{equation}
Combining \eqref{InteriorLowerBound} and \eqref{LiminfNeighbourhood} we find
that
\begin{align*}
    (1+C(\Omega)r)\liminf_{\varepsilon\to0^{+}}
    \frac{E_{\varepsilon}(u_{\varepsilon},\Omega)}
    {\mathopen{}\left|\log(\varepsilon)\right|\mathclose{}}
    &\ge\liminf_{\varepsilon\to0^{+}}
    \frac{E_{\varepsilon}(u_{\varepsilon},
    \mathcal{O}_{\frac{r}{2}})}
    {\mathopen{}\left|\log(\varepsilon)\right|\mathclose{}}
    +(1+C(\Omega)r)\liminf_{\varepsilon\to0^{+}}
    \frac{E_{\varepsilon}(u_{\varepsilon},
    \Omega_{\frac{r}{2}})}
    {\mathopen{}\left|\log(\varepsilon)\right|\mathclose{}}
    \\
    &\ge\Biggl(
    \pi\sum_{x\in\mathcal{O}_{\frac{r}{2}}}
    |d_{x}|\Biggr)
    +\Biggl(\pi\sum_{x\in\Omega_{\frac{r}{2}}}
    |d_{x}|+
    \frac{\pi}{2}\sum_{x\in\partial\Omega}|d_{x}|\Biggr)
    \\
    &=\pi\sum_{i=1}^{M_{1}}|d_{i}|
    +\frac{\pi}{2}\sum_{j=1}^{M_{2}}
    |d_{j}|.
\end{align*}
Since $r$ was arbitrary we may let $r\to0^{+}$ to obtain the desired
inequality.

\subsection{Proof of Upper Bound}

\subsubsection{Step \texorpdfstring{$1$}{}:}
In this subsection we construct the canonical harmonic map with prescribed
singularities which has normal part zero.
We will use this map in order to build the recovery sequence for
the zeroth order $\Gamma$-convergence.
This generalizes the construction from \cite{IgKur1}, see also
\cite{BBH}, to consider interior
vortices as well as a general connected open subset of $\mathbb{R}^{2}$.
Our approach to building the canonical harmonic map is 
very much inspired by
the construction from \cite{BBH}.

Note that by rotating this map by $\frac{\pi}{2}$ radians we obtain the
canonical harmonic map with prescribed singularities which has tangential
part zero.\\

To construct the desired map we first solve the following boundary value
problem:
\begin{equation}\label{ConjugatePDE}
    \begin{cases}
        -\Delta{}\Psi=2\pi\sum\limits_{i=1}^{M_{1}}d_{i}\delta_{a_{i}}&
        \text{in }\Omega\\
        \hspace{8pt}
        \dfrac{\partial\Psi}{\partial{}\mathbf{n}}=
        \widetilde{\kappa}_{i}
        -\pi\sum\limits_{k=1}^{M_{2,j}}d_{jk}\delta_{c_{jk}}&
        \text{on }(\partial\Omega)_{j}\text{ for }j=1,2,\ldots,b\\
        \hspace{8pt}
        \dfrac{\partial\Psi}{\partial\mathbf{n}}=
        \widetilde{\kappa}_{0}-\pi\sum\limits_{k=1}^{M_{2,0}}d_{0k}\delta_{c_{0k}}&
        \text{on }(\partial\Omega)_{0}
    \end{cases}
\end{equation}
where $\mathbf{n}\coloneqq-\nu$ is the outward unit normal on
$(\partial\Omega)_{j}$ for $j=0,1,2,\ldots,b$.
We require that
$d_{i},d_{jk}\in\mathbb{Z}\setminus\{0\}$
for $i=1,2,\ldots,M_{1}$ and $k=1,2,\ldots,M_{2,j}$
for $j=0,1,\ldots,b$,
\begin{align}
    &\sum_{i=1}^{M_{1}}d_{i}
    +\frac{1}{2}\sum_{j=0}^{b}\sum_{k=1}^{M_{2,j}}d_{jk}
    =\chi_{Euler}(\Omega),\\
    &\frac{1}{2}\sum_{k=1}^{M_{2,j}}d_{jk}\in\mathbb{Z}
    \text{ for }j=0,1,\ldots,b,\label{IntegralSum}
\end{align}
as well as that $\{a_{i}\}_{i=1}^{M_{1}}\subseteq\Omega$ and
$\{c_{jk}\}_{k=1}^{M_{2,j}}\subseteq(\partial\Omega)_{j}$ for each
$j=0,1,\ldots,b$.
We let $\Psi_{1}\colon\Omega\to\mathbb{R}$ be defined by
\begin{equation}\label{InteriorLogs}
    \Psi_{1}(x)=\sum_{i=1}^{M_{1}}d_{i}\log(|x-a_{i}|).
\end{equation}
Notice that $\Psi_{1}$ solves
\begin{equation*}
    -\Delta\Psi_{1}=2\pi\sum_{i=1}^{M_{1}}d_{i}\delta_{a_{i}}
\end{equation*}
and so to solve \eqref{ConjugatePDE} it suffices to solve
\begin{equation*}
    \begin{cases}
        -\Delta{}\Psi=0&
        \text{in }\Omega\\
        \hspace{8pt}
        \dfrac{\partial\Psi}{\partial{}\mathbf{n}}=
        \widetilde{\kappa}_{i}
        -\pi\sum\limits_{k=1}^{M_{2,j}}d_{jk}\delta_{c_{jk}}
        -\dfrac{\partial\Psi_{1}}{\partial\mathbf{n}}&
        \text{on }(\partial\Omega)_{j}\text{ for }j=1,2,\ldots,b\\
        \hspace{8pt}
        \dfrac{\partial\Psi}{\partial\mathbf{n}}=
        \widetilde{\kappa}_{0}
        -\pi\sum\limits_{k=1}^{M_{2,0}}d_{0k}\delta_{c_{0k}}
        -\dfrac{\partial\Psi_{1}}{\partial\mathbf{n}}&
        \text{on }(\partial\Omega)_{0}.
    \end{cases}
\end{equation*}
Next, we let $\Psi_{2}\colon\Omega\to\mathbb{R}$ be defined by
\begin{equation}\label{BoundaryLogs}
    \Psi_{2}(x)=\sum_{p=0}^{b}\sum_{\ell=1}^{M_{2,p}}d_{p\ell}
    \log(|x-c_{p\ell}|).
\end{equation}
We claim that $\Psi_{2}$ satisfies
\begin{equation}\label{ReducedProblem}
    \begin{cases}
        -\Delta\Psi_{2}=0& \text{on }\Omega\\
        \hspace{11pt}
        \dfrac{\partial\Psi_{2}}{\partial\mathbf{n}}=
        -\pi\sum\limits_{\ell=1}^{M_{2,j}}d_{j\ell}\delta_{c_{j\ell}}+
        \sum\limits_{p=0}^{b}\sum\limits_{\ell=1}^{M_{2,p}}
        d_{p\ell}\frac{x-c_{p\ell}}{|x-c_{p\ell}|^{2}}\cdot\mathbf{n}&
        \text{on }(\partial\Omega)_{j}\text{ for }j=1,2,\ldots,b\\
        \hspace{11pt}
        \dfrac{\partial\Psi_{2}}{\partial\mathbf{n}}=
        -\pi\sum\limits_{\ell=1}^{M_{2,0}}d_{0\ell}\delta_{c_{0\ell}}+
        \sum\limits_{p=0}^{b}\sum\limits_{\ell=1}^{M_{2,p}}
        d_{p\ell}\frac{x-c_{p\ell}}{|x-c_{p\ell}|^{2}}\cdot\mathbf{n}&
        \text{on }(\partial\Omega)_{0}.
    \end{cases}
\end{equation}
To see this consider $\varphi\in{}W^{1,2}(\Omega)$ and observe that
\begin{align*}
    \int_{\Omega}\!{}\nabla\Psi_{2}\cdot\nabla\varphi
    &=\lim_{\delta\to0^{+}}
    \int_{\Omega\setminus
    \bigcup\limits_{j=0}^{b}\bigcup\limits_{m=1}^{M_{2,j}}
    B_{\delta}(c_{jm})}\!{}
    \nabla\Psi_{2}\cdot\nabla\varphi\\
    &=\lim_{\delta\to0^{+}}
    \biggl[
    \int_{(\partial\Omega)_{0}\setminus\bigcup\limits_{m=1}^{M_{2,0}}
    B_{\delta}(c_{0m})}\!{}
    \varphi
    \sum_{p=0}^{b}\sum_{\ell=1}^{M_{2,p}}d_{p\ell}
    \frac{x-c_{p\ell}}{|x-c_{p\ell}|^{2}}
    \cdot\mathbf{n}
    \\
    &+\sum_{j=1}^{b}
    \int_{(\partial\Omega)_{j}\setminus
    \bigcup\limits_{m=1}^{M_{2,j}}B_{\delta}(c_{jm})}\!{}
    \varphi
    \sum_{p=0}^{b}\sum_{\ell=1}^{M_{2,p}}d_{p\ell}
    \frac{x-c_{p\ell}}{|x-c_{p\ell}|^{2}}
    \cdot\mathbf{n}\\
    &+\sum_{m=1}^{M_{2,0}}
    \int_{\Omega\cap\partial{}B_{\delta}(c_{0m})}\!{}
    \varphi
    \sum_{p=0}^{b}\sum_{\ell=1}^{M_{2,p}}d_{p\ell}
    \frac{x-c_{p\ell}}{|x-c_{p\ell}|^{2}}
    \cdot\mathbf{n}\\
    &+\sum_{j=1}^{b}\sum_{m=1}^{M_{2,j}}
    \int_{\Omega\cap\partial{}B_{\delta}(c_{jm})}\!{}
    \varphi
    \sum_{p=0}^{b}\sum_{\ell=1}^{M_{2,p}}d_{p\ell}
    \frac{x-c_{p\ell}}{|x-c_{p\ell}|^{2}}
    \cdot\mathbf{n}
    \biggr].
\end{align*}
Next, notice that
\begin{align*}
    &\sum_{m=1}^{M_{2,0}}
    \int_{\Omega\cap\partial{}B_{\delta}(c_{0m})}\!{}
    \varphi
    \sum_{p=0}^{b}\sum_{\ell=1}^{M_{2,p}}d_{p\ell}
    \frac{x-c_{p\ell}}{|x-c_{p\ell}|^{2}}
    \cdot\mathbf{n}
    +\sum_{j=1}^{b}\sum_{m=1}^{M_{2,j}}
    \int_{\Omega\cap\partial{}B_{\delta}(c_{jm})}\!{}
    \varphi
    \sum_{p=0}^{b}\sum_{\ell=1}^{M_{2,p}}d_{p\ell}
    \frac{x-c_{p\ell}}{|x-c_{p\ell}|^{2}}
    \cdot\mathbf{n}\\
    =&-\sum_{m=1}^{M_{2,0}}\frac{d_{0m}}{\delta}
    \int_{\Omega\cap\partial{}B_{\delta}(c_{0m})}\!{}\varphi
    -\sum_{j=1}^{b}\sum_{m=1}^{M_{2,j}}\frac{d_{jm}}{\delta}
    \int_{\Omega\cap\partial{}B_{\delta}(c_{jm})}\!{}\varphi
    +O(\delta)
\end{align*}
and hence
\begin{align*}
    &\lim_{\delta\to0^{+}}\biggl[
    \sum_{m=1}^{M_{2,0}}\int_{\Omega\cap\partial{}B_{\delta}(c_{0m})}\!{}
    \varphi\sum_{p=0}^{b}\sum_{\ell=1}^{M_{2,p}}d_{p\ell}
    \frac{x-c_{p\ell}}{|x-c_{p\ell}|^{2}}\cdot\mathbf{n}
    +\sum_{j=1}^{b}\sum_{m=1}^{M_{2,j}}
    \int_{\Omega\cap\partial{}B_{\delta}(c_{jm})}\!{}
    \varphi\sum_{p=0}^{b}\sum_{\ell=1}^{M_{2,p}}d_{p\ell}
    \frac{x-c_{p\ell}}{|x-c_{p\ell}|^{2}}\cdot\mathbf{n}
    \biggr]\\
    =&-\pi\sum_{m=1}^{M_{2,0}}d_{0m}\varphi(c_{0m})
    -\pi\sum_{j=1}^{b}\sum_{m=1}^{M_{2,j}}d_{jm}\varphi(c_{jm}).
\end{align*}
Now we show that each
$\frac{x-c_{p\ell}}{|x-c_{p\ell}|^{2}}\cdot\mathbf{n}$ has a
finite limit at $c_{p\ell}$ along $(\partial\Omega)_{p}$ for $p=0,1,\ldots,b$.
It will also follow from this calculation that this function is Lipschitz
on $(\partial\Omega)_{p}$.
We only do the calculation when $p=1,2,\ldots,b$ as the case of $p=0$ is
similar.
We note that since $c_{q\ell}\notin(\partial\Omega)_{p}$ then this function
is smooth on $(\partial\Omega)_{q}$ and hence has a finite limit at
$c_{q\ell}$.
Now we consider $c_{p\ell}\in(\partial\Omega)_{p}$.
Using a local parametrization of
$(\partial\Omega)_{p}\cap{}B_{\delta}(c_{p\ell})$
similar to the one found in Section \ref{TangentNormalCoordinates}, except
centred at $c_{p\ell}$, we write
$(x-c_{p\ell})\cdot\mathbf{n}
=(\gamma_{p}(s)-\gamma_{p}(0))\cdot\tilde{\mathbf{n}}(s)$
where $\tilde{\mathbf{n}}(s)$ is the coordinate representation of
$\mathbf{n}$.
We introduce, by mollifying, a smooth approximation
$\tilde{\mathbf{n}}_{\eta}$ to
$\tilde{\mathbf{n}}$ which converges in $C^{1}$ and whose second derivative
is uniformly bounded in terms of the Lipschitz norm of
$\tilde{\mathbf{n}}'$.
Next we note that by a Taylor expansion to second order we have
\begin{equation*}
    (\gamma_{p}(s)-\gamma_{p}(0))\cdot\tilde{\mathbf{n}}_{\eta}(s)\\
    =s\gamma_{p}'(0)\cdot\tilde{\mathbf{n}}_{\eta}(0)
    +\frac{s^{2}}{2}\gamma_{p}''(0)\cdot\tilde{\mathbf{n}}_{\eta}(0)
    +\frac{s^{2}}{2}\gamma_{p}'(0)\cdot\tilde{\mathbf{n}}_{\eta}'(0)
    +O(s^{3}).
\end{equation*}
Using that $\tilde{\mathbf{n}}_{\eta}$ approximates $\tilde{\mathbf{n}}$
in $C^{1}$ as $\eta\to0^{+}$ we see that
\begin{equation*}
    (\gamma_{p}(s)-\gamma_{p}(0))\cdot\tilde{\mathbf{n}}(s)
    =s\gamma_{p}'(0)\cdot\tilde{\mathbf{n}}(0)
    +\frac{s^{2}}{2}\gamma_{p}''(0)\cdot\tilde{\mathbf{n}}(0)
    +\frac{s^{2}}{2}\gamma_{p}'(0)\cdot\tilde{\mathbf{n}}'(0)
    +O(s^{3}).
\end{equation*}
Noting that on $(\partial\Omega)_{p}$ we have
\begin{equation*}
    \tilde{\mathbf{n}}(s)=-\gamma_{p}'(s)^{\perp},\hspace{20pt}
    \tilde{\mathbf{n}}'(s)=\kappa_{p}(s)\gamma_{p}'(s)
\end{equation*}
we see that
\begin{align*}
    (\gamma_{p}(s)-\gamma_{p}(0))\cdot\tilde{\mathbf{n}}(s)
    &=-\frac{s^{2}}{2}\gamma_{p}''(0)\cdot\gamma_{p}'(0)^{\perp}
    +\frac{s^{2}}{2}\kappa_{p}(0)
    +O(s^{3})\\
    &=-\frac{\kappa_{p}(0)s^{2}}{2}+\frac{\kappa_{p}(0)s^{2}}{2}
    +O(s^{3})=O(s^{3}).
\end{align*}
We also observe that
\begin{equation*}
    |x-c_{p\ell}|^{2}=s^{2}+O(s^{3}).
\end{equation*}
We conclude that
\begin{equation}\label{FiniteLimit}
    \frac{x-c_{p\ell}}{|x-c_{p\ell}|^{2}}\cdot\mathbf{n}
    =O(s).
\end{equation}
Hence, the limit is finite as $x$ tends to $c_{p\ell}$.
One can note from the error terms that
$x\mapsto\frac{x-c_{p\ell}}{|x-c_{p\ell}|^{2}}\cdot\mathbf{n}$ is a Lipschitz
function on $(\partial\Omega)_{p}$ since $\Omega$ has $C^{2,1}$ boundary.
This permits us to conclude that
\begin{align*}
    &\lim_{\delta\to0^{+}}
    \biggl[
    \int_{(\partial\Omega)_{0}\setminus\bigcup\limits_{m=1}^{M_{2,0}}
    B_{\delta}(c_{0m})}\!{}
    \varphi
    \sum_{p=0}^{b}\sum_{\ell=1}^{M_{2,p}}d_{p\ell}
    \frac{x-c_{p\ell}}{|x-c_{p\ell}|^{2}}
    \cdot\mathbf{n}
    \\
    &+\sum_{j=1}^{b}
    \int_{(\partial\Omega)_{j}\setminus
    \bigcup\limits_{p=1}^{M_{2,j}}B_{\delta}(c_{jp})}\!{}
    \varphi
    \sum_{p=0}^{b}\sum_{\ell=1}^{M_{2,p}}d_{p\ell}
    \frac{x-c_{p\ell}}{|x-c_{p\ell}|^{2}}
    \cdot\mathbf{n}\biggr]\\
    &=\int_{(\partial\Omega)_{0}}\!{}
    \varphi
    \sum_{p=0}^{b}\sum_{\ell=1}^{M_{2,p}}d_{p\ell}
    \frac{x-c_{p\ell}}{|x-c_{p\ell}|^{2}}
    \cdot\mathbf{n}
    +\sum_{j=1}^{b}
    \int_{(\partial\Omega)_{j}}\!{}
    \varphi
    \sum_{p=0}^{b}\sum_{\ell=1}^{M_{2,p}}d_{p\ell}
    \frac{x-c_{p\ell}}{|x-c_{p\ell}|^{2}}
    \cdot\mathbf{n}.
\end{align*}
Putting this together gives that $\Psi_{2}$ solves \eqref{ReducedProblem}.
In addition, taking $\varphi\equiv1$ we obtain
\begin{align}
    &\int_{(\partial\Omega)_{0}}\!{}
    \sum\limits_{p=1}^{b}\sum\limits_{\ell=1}^{M_{2,p}}
    d_{p\ell}\frac{x-c_{p\ell}}{|x-c_{p\ell}|^{2}}\cdot\mathbf{n}
    +\sum_{j=1}^{b}
    \int_{(\partial\Omega)_{j}}\!{}
    \sum\limits_{p=1}^{b}\sum\limits_{\ell=1}^{M_{2,p}}
    d_{p\ell}\frac{x-c_{p\ell}}{|x-c_{p\ell}|^{2}}\cdot\mathbf{n}
    \nonumber\\
    =&\pi\sum_{j=0}^{b}\sum_{m=1}^{M_{2,j}}d_{jm}.
    \label{Compatibility}
\end{align}
Letting $\Bigl(\frac{\partial\Psi_{2}}{\partial\mathbf{n}}\Bigr)_{ac}$
denote
\begin{equation*}
    \Bigl(\frac{\partial\Psi_{2}}{\partial\mathbf{n}}\Bigr)_{ac}
    \coloneqq\sum\limits_{p=0}^{b}\sum\limits_{\ell=1}^{M_{2,p}}
    d_{p\ell}\frac{x-c_{p\ell}}{|x-c_{p\ell}|^{2}}\cdot\mathbf{n}
\end{equation*}
we see that \eqref{Compatibility} gives the desired
\begin{equation*}
    \int_{\partial\Omega}\!{}
    \Bigl(\frac{\partial\Psi_{2}}{\partial\mathbf{n}}\Bigr)_{ac}
    =2\pi\chi_{Euler}(\Omega)-
    2\pi\sum_{i=1}^{M_{1}}d_{i}.
\end{equation*}
With $\Psi_{2}$ it now suffices to solve
\begin{equation}\label{FurtherReducedProblem}
    \begin{cases}
        -\Delta{}\Psi=0&
        \text{in }\Omega\\
        \hspace{8pt}
        \dfrac{\partial\Psi}{\partial{}\mathbf{n}}=
        \widetilde{\kappa}_{i}
        -\Bigl(\dfrac{\partial\Psi_{2}}{\partial\mathbf{n}}\Bigr)_{ac}
        -\dfrac{\partial\Psi_{1}}{\partial\mathbf{n}}&
        \text{on }(\partial\Omega)_{i}\text{ for }i=1,2,\ldots,b\\[6pt]
        \hspace{8pt}
        \dfrac{\partial\Psi}{\partial\mathbf{n}}=
        \widetilde{\kappa}_{0}
        -\Bigl(\dfrac{\partial\Psi_{2}}{\partial\mathbf{n}}\Bigr)_{ac}
        -\dfrac{\partial\Psi_{1}}{\partial\mathbf{n}}&
        \text{on }(\partial\Omega)_{0}
    \end{cases}
\end{equation}
where the Neumann data is in $C^{0,1}(\partial\Omega)$.
Observe that if
$g(x)\coloneqq{}e^{i\sum\limits_{j=1}^{M_{1}}d_{j}\theta(x-a_{j})}$,
where $\theta$ is the argument function as in \cite{BBH}, then
\begin{equation*}
    \frac{\partial\Psi_{1}}{\partial\mathbf{n}}=g\times\partial_{\tau}g
\end{equation*}
and hence
\begin{equation*}
    \int_{\partial\Omega}\!{}\frac{\partial\Psi_{1}}{\partial\mathbf{n}}
    =2\pi\text{deg}(g,(\partial\Omega)_{0})
    -2\pi\sum_{i=1}^{b}\text{deg}(g,(\partial\Omega)_{i})
    =2\pi\sum_{i=1}^{M_{1}}d_{i}.
\end{equation*}
Finally, observe that
\begin{align*}
    &\int_{(\partial\Omega)_{0}}\!{}\biggl[\widetilde{\kappa}_{0}
    -\Bigl(\frac{\partial\Psi_{2}}{\partial\mathbf{n}}\Bigr)_{ac}
    -\frac{\partial\Psi_{1}}{\partial\mathbf{n}}\biggr]
    +\sum_{i=1}^{b}\int_{(\partial\Omega)_{i}}\!{}
    \biggl[\widetilde{\kappa}_{i}
    -\Bigl(\frac{\partial\Psi_{2}}{\partial\mathbf{n}}\Bigr)_{ac}
    -\frac{\partial\Psi_{1}}{\partial\mathbf{n}}\biggr]\\
    =&2\pi\chi_{Euler}(\Omega)
    -\int_{\partial\Omega}\!{}
    \Bigl(\frac{\partial\Psi_{2}}{\partial\mathbf{n}}\Bigr)_{ac}
    -\int_{\partial\Omega}\!{}\frac{\partial\Psi_{1}}{\partial\mathbf{n}}\\
    =&2\pi\chi_{Euler}(\Omega)-
    \biggl[2\pi\chi_{Euler}(\Omega)-2\pi\sum_{i=1}^{M_{1}}d_{i}\biggr]
    -2\pi\sum_{i=1}^{M_{1}}d_{i}\\
    =&0
\end{align*}
and that the boundary data is continuous.
Thus, \eqref{FurtherReducedProblem} has a solution $H_{\Omega}$.
As in the proof of Proposition $20$ of \cite{IgKur1} we have
$H_{\Omega}\in{}W^{1,2}(\Omega)$.
Letting $\Psi\coloneqq\Psi_{1}+\Psi_{2}+H_{\Omega}$ we now have a solution to
\eqref{ConjugatePDE} in $W^{1,p}(\Omega)$ for $1\le{}p<2$.\\

Next, we let $\bar{j}\colon\Omega\to\mathbb{R}^{2}$ denote the vector field
\begin{equation*}
    \bar{j}\coloneqq-\nabla^{\perp}\Psi.
\end{equation*}
We observe that $\bar{j}\in{}L^{p}(\Omega;\mathbb{R}^{2})$ for $1\le{}p<2$
and satisfies
\begin{equation}\label{CanonicalHarmonicEquation}
    \begin{cases}
        \text{div}(\bar{j})=0&\text{on }\Omega\\
        \text{curl}(\bar{j})
        =2\pi\sum\limits_{i=1}^{M_{1}}d_{i}\delta_{a_{i}}&\text{on }\Omega\\
        \bar{j}\cdot\tau=\widetilde{\kappa}_{i}
        -\pi\sum\limits_{k=1}^{M_{2,j}}d_{jk}\delta_{c_{jk}}&
        \text{on }(\partial\Omega)_{j}\text{ for }j=1,2,\ldots,b\\
        \bar{j}\cdot\tau=\widetilde{\kappa}_{0}
        -\pi\sum\limits_{k=1}^{M_{2,0}}d_{0k}\delta_{c_{0k}}&
        \text{on }(\partial\Omega)_{0}.
    \end{cases}
\end{equation}
Following the argument of Lemma $I.1$ on page $5$ of \cite{BBH}
we conclude from \eqref{IntegralSum} and the first, third, and fourth
conditions of \eqref{CanonicalHarmonicEquation} that, by integrating over
paths in $\Omega$, there is a function
$\bar{u}\in{}W^{1,p}(\Omega;\mathbb{S}^{1})$, for $1\le{}p<2$, such that
\begin{equation}\label{CurrentEquation}
    j\bar{u}=\bar{j}=-\nabla\Psi.
\end{equation}
As noted on page $8$ of \cite{BBH} the solution, $\bar{u}$, is unique up to a
constant phase.
Next, we define $g_{i}\in{}BV((\partial\Omega)_{i};\mathbb{S}^{1})$, for each
$i=0,1,\ldots,b$, so that
$g_{i}\times\partial_{\tau}g_{i}=\bar{j}\cdot\tau_{i}$.
The definition is done by an algorithm.
We start with the case of $i=1,2,\ldots,b$.
Let $x_{0}^{i}\in(\partial\Omega)_{i}\setminus\bigcup\limits_{j=1}^{M_{2,i}}
\{c_{ij}\}$ and define $g_{i}(x)=\tau_{i}(x)$ on the connected component
of $(\partial\Omega)_{i}\setminus\bigcup\limits_{j=1}^{M_{2,i}}\{c_{ij}\}$
containing $x_{0}^{i}$.
Following the boundary curve in the counterclockwise direction
we will encounter
$c_{ij_{1}}\in\bigcup\limits_{j=1}^{M_{2,i}}\{c_{ij}\}$ which has associated
degree $(-1)^{d_{ij_{1}}}$.
We define $g_{i}$ to be $(-1)^{-d_{ij_{1}}}\tau_{i}$ on the adjacent
connected component of
$(\partial\Omega)_{i}\setminus\bigcup\limits_{j=1}^{M_{2,i}}\{c_{ij}\}$
whose boundary contains $c_{ij_{1}}$ (i.e. with local argument
representation shifted by $-d_{ij_{1}}\pi$).
We then repeat our algorithm after we encounter another member of
$\bigcup\limits_{j=1}^{M_{2,i}}\{c_{ij}\}$.
Since $\bigcup\limits_{j=1}^{M_{2,i}}\{c_{ij}\}$ contains only finitely
many points and $(\partial\Omega)_{i}$ is a closed curve then we eventually
return to the connected component containing $x_{0}^{i}$.
Our function is well defined due to \eqref{IntegralSum}.
Observe that on
$(\partial\Omega)_{i}\setminus\bigcup\limits_{j=1}^{M_{2,i}}\{c_{ij}\}$
then
\begin{equation*}
    g_{i}\times\partial_{\tau}g_{i}=\widetilde{\kappa}_{i}.
\end{equation*}
In coordinates about each $c_{ij}$ for $j=1,2,\ldots,M_{2,i}$ we also see that
at $c_{ij}$:
\begin{equation*}
    g_{i}\times\partial_{\tau}g_{i}
    =\widetilde{\kappa}_{i}(c_{ij})-\pi{}d_{ij}
\end{equation*}
We conclude that
\begin{equation*}
    g_{i}\times\partial_{\tau}g_{i}=\widetilde{\kappa}_{i}-
    \pi\sum_{j=1}^{M_{2,i}}d_{ij}\delta_{c_{ij}}.
\end{equation*}
A similar construction works along $(\partial\Omega)_{0}$ except the singular
part of $g_{0}\times\partial_{\tau}g_{0}$ has jumps of
$-\pi{}d_{0j}$ at $c_{0j}\in\bigcup\limits_{j=1}^{M_{2,0}}\{c_{0j}\}$ due to
orientation.
Thus, $\bar{u}$ solves
\begin{equation*}
    \begin{cases}
        j\bar{u}=\bar{j}&\text{on }\Omega\\
        j\bar{u}=g_{i}\times\partial_{\tau}g_{i}&
        \text{on }(\partial\Omega)_{i}\text{ for }i=0,1,\ldots,b.
    \end{cases}
\end{equation*}
Arguing as in the proof of Theorem $I.3$ on page $9$ of \cite{BBH} there is
a phase $\theta_{0}\in[0,2\pi]$ so that
$\tilde{u}\coloneqq{}e^{-i\theta_{0}}\bar{u}$ satisfies
\begin{equation}\label{ProvisionalCurrentEquation}
    \begin{cases}
        j\tilde{u}=\bar{j}&\text{on }\Omega\\
        \tilde{u}=g_{0}&\text{on }(\partial\Omega)_{0}\\
        \tilde{u}=e^{i\theta_{i}}g_{i}&\text{on }(\partial\Omega)_{i}
    \end{cases}
\end{equation}
where $\theta_{i}\in[0,2\pi]$ for each $i=1,2,\ldots,b$.
Finally, we let, for $i=1,2,\ldots,b$, $\phi_{i}\colon\Omega\to\mathbb{R}$
be the solution to
\begin{equation*}
    \begin{cases}
        -\Delta\phi_{i}=0&\text{on }\Omega\\
        \phi_{i}(x)=1&\text{on }(\partial\Omega)_{i}\\
        \phi_{i}(x)=0&\text{on }(\partial\Omega)_{j}\text{ for }j\neq{}i.
    \end{cases}
\end{equation*}
and we let $\eta_{i}$, for $i=1,2,\ldots,b$ denote $\eta_{i}=\nabla\phi_{i}$.
Observe that $\eta_{i}\in{}H_{T}^{1}(\Omega)$, the space of harmonic $1$-forms
 with zero tangential part, for each $i=1,2,\ldots,b$ and
by Lemmas $9$ and $12$ of \cite{BJOS} form a basis for $H_{T}^{1}(\Omega)$.
We consider $u_{*}^{N}\colon\Omega\to\mathbb{S}^{1}$ defined by
\begin{equation}\label{def:TangentMap}
    u_{*}^{N}\coloneqq{}
    e^{-i\sum\limits_{i=1}^{b}\theta_{i}\phi_{i}}\tilde{u}.
\end{equation}
Observe that $u_{*}^{N}\in{}W^{1,p}(\Omega;\mathbb{S}^{1})$, for $1\le{}p<2$,
satisfies $(u_{*}^{N})_{N}=0$,
$ju_{*}^{N}=\bar{j}-\sum\limits_{i=1}^{b}\theta_{i}\eta_{i}$, as well as
\begin{equation*}
    \begin{cases}
        \text{div}(ju_{*}^{N})=0&\text{on }\Omega\\
        \text{curl}(ju_{*}^{N})
        =2\pi\sum\limits_{i=1}^{M_{1}}d_{i}\delta_{a_{i}}&\text{on }\Omega\\
        ju_{*}^{N}\cdot\tau=\widetilde{\kappa}_{i}
        -\pi\sum\limits_{j=1}^{M_{2,i}}d_{ij}\delta_{c_{ij}}&
        \text{on }(\partial\Omega)_{i}\text{ for }i=1,2,\ldots,b\\
        ju_{*}^{N}\cdot\tau=\widetilde{\kappa}_{0}
        -\pi\sum\limits_{j=1}^{M_{2,0}}d_{0j}\delta_{c_{0j}}&
        \text{on }(\partial\Omega)_{0}.
    \end{cases}
\end{equation*}
This is the canonical harmonic map with prescribed singularities and normal
part zero.
In order to simplify the renormalized energy we observe that we may apply
Gram-Schmidt and normalization to $\{\eta_{1},\eta_{2},\ldots,\eta_{b}\}$ to
obtain an orthonormal basis
$\{\bar{\eta}_{1},\bar{\eta}_{2},\ldots,\bar{\eta}_{b}\}$
of $H_{T}^{1}(\Omega)$.
Notice that by \eqref{CurrentEquation},
$\bar{j}\in(H_{T}^{1}(\Omega))^{\perp}$
and so if we set
\begin{equation*}
    \Phi_{j}\coloneqq\int_{\Omega}\!{}ju_{*}^{N}\cdot\bar{\eta}_{j}
\end{equation*}
we have the $L^{2}$ orthogonal decomposition 
\begin{equation}\label{Modifiedj}
    ju_{*}^{N}=\bar{j}+\sum_{i=1}^{b}\Phi_{j}\bar{\eta}_{j}.
\end{equation}

Finally we note that to obtain the canonical harmonic map with prescribed
singularities and tangential part zero we set
\begin{equation*}
    u_{*}^{T}\coloneqq(u_{*}^{N})^{\perp}.
\end{equation*}

\begin{remark}\hspace{2pt}\\[-10pt]
    \begin{enumerate}
        \item
            We observe that $ju_{*}^{N}-\bar{j}\in{}H_{T}^{1}(\Omega)$ is a
            similar lattice condition to what was identified in
            \cite{IgJ} as well as \cite{BBH}.
        \item
            Observe that in the case that $\Omega$ is simply connected
            one could stop at \eqref{ProvisionalCurrentEquation}
            since $H_{T}^{1}(\Omega)\cong0$.
            As a result, there are no flux integrals, $\Phi_{j}$,
            in this case.
            This provides an alternative to the construction found in
            \cite{IgKur1}.
            Note in this case that \eqref{ConjugatePDE} characterizes
            the canonical harmonic map while in the non-simply connected
            case \eqref{ConjugatePDE} is not sufficient.
    \end{enumerate}
\end{remark}

\subsubsection{Step \texorpdfstring{$2$}{}:}
Here we compute the renormalized energy.
As the Ginzburg-Landau energy is invariant under rotation by $\frac{\pi}{2}$
radians we only do this computation for $u_{*}^{T}$.\\

Consider $\{a_{i}\}_{i=1}^{M_{1}}\subseteq\Omega$ and for each $i=0,1,\ldots,b$
we consider $\{c_{ij}\}_{j=1}^{M_{2,i}}\subseteq(\partial\Omega)_{i}$.
Corresponding to these points we let $d_{i}\in\mathbb{Z}\setminus\{0\}$ for 
$i=1,2,\ldots,M_{1}$ and for each $i=0,1,\ldots,b$ we consider
$d_{ij}\in\mathbb{Z}\setminus\{0\}$ corresponding to $c_{ij}\in(\partial\Omega)_{i}$.
We also require that
\begin{align*}
    &\sum_{i=1}^{M_{1}}d_{i}
    +\frac{1}{2}\sum_{i=0}^{b}\sum_{j=1}^{M_{2,i}}d_{ij}
    =\chi_{Euler}(\Omega)\\
    &\frac{1}{2}\sum_{j=1}^{M_{2,i}}d_{ij}\in\mathbb{Z}
    \text{ for }i=0,1,\ldots,b.
\end{align*}
If $u_{*}^{T}$ is the associated canonical harmonic map with prescribed
singularities satisfying the above then we let
\begin{align*}
    \mathbf{a}&\coloneqq(a_{1},a_{2},\ldots,a_{M_{1}})\\
    \mathbf{c}_{i}&\coloneqq(c_{i1},c_{i2},\ldots,c_{iM_{2,i}})\hspace{2pt}
    \text{for }i=0,1,\ldots,b\\
    \mathbf{c}&\coloneqq
    (\mathbf{c}_{0},\mathbf{c}_{1},\ldots,\mathbf{c}_{b})\\
    \mathbf{d}_{1}&\coloneqq(d_{1},d_{2},\ldots,d_{M_{1}})\\
    \mathbf{d}_{2,i}&\coloneqq(d_{i1},d_{i2},\ldots,d_{iM_{2,i}})\\
    \mathbf{d}_{2}&\coloneqq(\mathbf{d}_{2,0},\mathbf{d}_{2,1},\ldots,
    \mathbf{d}_{2,b})\\
    \Phi&\coloneqq(\Phi_{1},\Phi_{2},\ldots,\Phi_{b})
\end{align*}
and we define
\begin{equation}\label{RenormalizedEnergy}
    \mathbb{W}(\mathbf{a},\mathbf{c},\mathbf{d}_{1},\mathbf{d}_{2},\Phi)
    \coloneqq\lim_{\sigma\to0^{+}}
    \Biggl\{\frac{1}{2}
    \int_{\Omega_{\sigma}}\!{}
    |\nabla{}u_{*}^{T}|^{2}
    -\pi\Bigl[|\mathbf{d}_{1}|^{2}+\frac{1}{2}|\mathbf{d}_{2}|^{2}\Bigr]
    \mathopen{}\left|\log(\sigma)\right|\mathclose{}\Biggr\}
\end{equation}
where
\begin{equation*}
    \Omega_{\sigma}\coloneqq\Omega\setminus
    \biggl[\bigsqcup_{i=1}^{M_{1}}B_{\sigma}(a_{i})\sqcup
    \bigsqcup_{i=0}^{b}\bigsqcup_{j=1}^{M_{2,i}}B_{\sigma}(c_{ij})\biggr]
\end{equation*}
and where
\begin{equation*}
    0<\sigma<\frac{1}{2}\min\Bigl\{\min\limits_{i\neq{}j}\{|a_{i}-a_{j}|\},
    \min\limits_{i_{1}\neq{}i_{2},\,j_{1}\neq{}j_{2}}
    \{|c_{i_{1}j_{1}}-c_{i_{2}j_{2}}|\},
    \min\limits_{
    i,j,k}\{|a_{i}-c_{jk}|\}\Bigr\}.
\end{equation*}
We now compute \eqref{RenormalizedEnergy}.
By the $L^{2}$ orthogonal decomposition
\eqref{Modifiedj}
and
\eqref{CurrentEquation} we have
\begin{align*}
    \frac{1}{2}\int_{\Omega_{\sigma}}\!{}|\nabla{}u_{*}^{T}|^{2}
    &=\frac{1}{2}\int_{\Omega_{\sigma}}\!{}|\bar{j}|^{2}
    +\frac{1}{2}|\Phi|^{2}+o(1)\hspace{36pt}\text{as }\sigma\to0^{+}\\
    &=\frac{1}{2}\int_{\Omega_{\sigma}}\!{}|\nabla\Psi|^{2}
    +\frac{1}{2}|\Phi|^{2}+o(1)\hspace{25pt}\text{as }\sigma\to0^{+}.
\end{align*}
Integrating by parts and using \eqref{ConjugatePDE} we obtain
\begin{align*}
    \frac{1}{2}\int_{\Omega_{\sigma}}\!{}|\nabla\Psi|^{2}
    &=\frac{1}{2}\int_{(\partial\Omega)_{0}\setminus
    \bigsqcup\limits_{j=1}^{M_{2,0}}B_{\sigma}(c_{0j})}\!{}
    \Psi\frac{\partial\Psi}{\partial\mathbf{n}}
    +\frac{1}{2}\sum_{i=1}^{b}
    \int_{(\partial\Omega)_{i}\setminus\bigsqcup\limits_{j=1}^{M_{2,i}}
    B_{\sigma}(c_{ij})}\!{}
    \Psi\frac{\partial\Psi}{\partial\mathbf{n}}\\
    &+\frac{1}{2}\sum_{j=1}^{M_{2,0}}
    \int_{\Omega\cap\partial{}B_{\sigma}(c_{0j})}\!{}
    \Psi\frac{\partial\Psi}{\partial\mathbf{n}}
    +\frac{1}{2}\sum_{i=1}^{b}\sum_{j=1}^{M_{2,i}}
    \int_{\Omega\cap\partial{}B_{\sigma}(c_{ij})}\!{}
    \Psi\frac{\partial\Psi}{\partial\mathbf{n}}\\
    &+\frac{1}{2}\sum_{i=1}^{M_{1}}\int_{\partial{}B_{\sigma}(a_{i})}\!{}
    \Psi\frac{\partial\Psi}{\partial\mathbf{n}}\\
    &=(A)+(B)+(C)+(D)+(E).
\end{align*}
Observe that by \eqref{ConjugatePDE}, the calculation that lead to
\eqref{FiniteLimit}, and the trace theorem we have
\begin{align*}
    (A)&=\frac{1}{2}\int_{(\partial\Omega)_{0}}\!{}
    \widetilde{\kappa}_{0}\Psi+o(1)\hspace{68pt}\text{as }(\sigma\to0^{+}),\\
    (B)&=\frac{1}{2}\sum_{i=1}^{b}\int_{(\partial\Omega)_{i}}\!{}
    \widetilde{\kappa}_{i}\Psi+o(1)\hspace{46pt}\text{as }(\sigma\to0^{+}).
\end{align*}
Next we note that since $\Psi=\Psi_{1}+\Psi_{2}+H_{\Omega}$, where $\Psi_{1}$
is defined in \eqref{InteriorLogs}, $\Psi_{2}$ is defined in
\eqref{BoundaryLogs}, and $H_{\Omega}$ solves \eqref{FurtherReducedProblem},
we have, as $\sigma\to0^{+}$, that
\begin{align*}
    (C)&=-\frac{\pi}{2}\sum_{j=1}^{M_{2,0}}d_{0j}^{2}\log(\sigma)
    -\frac{\pi}{2}\sum_{j=1}^{M_{2,0}}\sum_{i=1}^{M_{1}}d_{i}d_{0j}
    \log(|c_{0j}-a_{i}|)\\
    &-\frac{\pi}{2}\sum_{j=1}^{M_{2,0}}\sum_{\substack{i=0,1,\ldots,b\\
    k=1,2,\ldots,M_{2,i}\\ (i,k)\neq(0,j)}}d_{ik}d_{0j}\log(|c_{0j}-c_{ik}|)
    -\frac{\pi}{2}\sum_{j=1}^{M_{2,0}}d_{0j}H_{\Omega}(c_{0j})+o(1)
    \\
    (D)&=-\frac{\pi}{2}\sum_{i=1}^{b}\sum_{j=1}^{M_{2,i}}d_{ij}^{2}
    \log(\sigma)
    -\frac{\pi}{2}\sum_{i=1}^{b}\sum_{j=1}^{M_{2,i}}\sum_{k=1}^{M_{1}}
    d_{ij}d_{k}\log(|c_{ij}-a_{k}|)
    \\
    &-\frac{\pi}{2}\sum_{i=1}^{b}\sum_{j=1}^{M_{2,i}}
    \sum_{\substack{k=0,1,\ldots,b\\
    l=1,2,\ldots,M_{2,k}\\ (k,l)\neq(i,j)}}d_{ij}d_{kl}\log(|c_{ij}-c_{kl}|)
    -\frac{\pi}{2}\sum_{i=1}^{b}\sum_{j=1}^{M_{2,i}}
    d_{ij}H_{\Omega}(c_{ij})+o(1)\\
    (E)&=-\pi\sum_{i=1}^{M_{1}}d_{i}^{2}\log(\sigma)
    -\pi\sum_{i=1}^{M_{1}}\sum_{\substack{j=1,2,\ldots,M_{1}\\ j\neq{}i}}
    d_{i}d_{j}\log(|a_{i}-a_{j}|)\\
    &-\pi\sum_{i=1}^{M_{1}}\sum_{j=0}^{b}\sum_{k=1}^{M_{2,j}}
    d_{i}d_{jk}\log(|c_{jk}-a_{i}|)
    -\pi\sum_{i=1}^{M_{1}}d_{i}H_{\Omega}(a_{i})+o(1)
\end{align*}
Combining the above computations for $(A)$, $(B)$, $(C)$, $(D)$, and $(E)$
we obtain the following expression for the renormalized energy:
\begin{align*}
    \mathbb{W}(\mathbf{a},\mathbf{b},\mathbf{d}_{1},\mathbf{d}_{2},\Phi)
    &=-\frac{\pi}{2}\sum_{i\neq{}j}d_{i}d_{j}\log(|a_{i}-a_{j}|)
    -\frac{\pi}{4}\sum_{(i_{1},j_{1})\neq(i_{2},j_{2})}d_{i_{1}j_{1}}
    d_{i_{2}j_{2}}\log(|c_{i_{1}j_{1}}-c_{i_{2}j_{2}}|)
    \nonumber\\
    &-\frac{3\pi}{2}\sum_{i=1}^{M_{1}}\sum_{j=0}^{b}\sum_{k=1}^{M_{2,j}}
    d_{i}d_{jk}\log(|a_{i}-c_{jk}|)
    -\pi\sum_{i=1}^{M_{1}}d_{i}H_{\Omega}(a_{i})
    \nonumber\\
    &-\frac{\pi}{2}\sum_{j=0}^{b}\sum_{k=1}^{M_{2,j}}d_{jk}H_{\Omega}(c_{jk})
    +\frac{1}{2}\int_{\partial\Omega}\!{}\widetilde{\kappa}\Psi
    +\frac{1}{2}|\Phi|^{2}.
\end{align*}
\begin{remark}
    In the case that $\Omega$ is simply connected and there are no interior
    vortices we observe that this matches the renormalized energy calculated
    in Proposition $20$ of \cite{IgKur1}.
    In addition, the flux contribution is similar to the one found in
    \cite{IgJ}.
\end{remark}

\subsubsection{Step \texorpdfstring{$3$}{}:}
We first observe that, by approximation, it suffices to demonstrate
the upper bound in the case that $d_{i}=\pm1$ for all $i=1,2,\ldots,M_{1}$
and $d_{jk}$ for all $k=1,2,\ldots,M_{2,j}$ and $j=0,1,\ldots,b$.
Note that the conditions on the degrees of vortices found in 
Theorem\ \ref{ZerothOrderNormal} part \eqref{Compactness}
can be preserved while splitting a vortex of absolute
degree larger than $1$.\\

Next, we set $\rho_{\varepsilon}(s)\coloneqq\min\{\frac{s}{\varepsilon},1\}$
and define a sequence $\{u_{\varepsilon}\}_{\varepsilon\in(0,1]}$ by
\begin{equation*}
    u_{\varepsilon}(x)=
    \biggl(\prod_{i=1}^{M_{1}}\rho_{\varepsilon}(|x-a_{i}|)\biggr)
    \biggl(\prod_{j=0}^{b}\prod_{k=1}^{M_{2,j}}\rho_{\varepsilon}(|x-c_{jk}|)
    \biggr)u_{*}^{T}
\end{equation*}
where $u_{*}^{T}$ is the map defined in \eqref{def:TangentMap}.
Observe that since $u_{\varepsilon}=u_{*}^{T}$ on
$\Omega_{\varepsilon}\coloneqq\Omega\setminus
\biggl(\bigcup\limits_{i=1}^{M_{1}}B_{\varepsilon}(a_{i})\biggr)\cup
\biggl(\bigcup\limits_{j=0}^{b}\bigcup\limits_{k=1}^{M_{2,j}}B_{\varepsilon,+}(c_{jk})\biggr)$
then by \eqref{RenormalizedEnergy}
\begin{align}\label{eq:LeadingOrder}
    E_{\varepsilon}(u_{\varepsilon})
    &=\int_{\Omega\setminus\Omega_{\varepsilon}}\!{}
    e_{\varepsilon}(u_{\varepsilon})
    +\pi\biggl[|\mathbf{d}_{1}|^{2}+\frac{1}{2}|\mathbf{d}_{2}|^{2}\biggr]
    \mathopen{}\left|\log(\varepsilon)\right|\mathclose{}
    +\mathbb{W}(\mathbf{a},\mathbf{c},\mathbf{d}_{1},\mathbf{d}_{2},\Phi)
    +o(1)\nonumber\\
    &=\int_{\Omega\setminus\Omega_{\varepsilon}}\!{}
    e_{\varepsilon}(u_{\varepsilon})
    +\pi\biggl[|\mathbf{d}_{1}|+\frac{1}{2}|\mathbf{d}_{2}|\biggr]
    \mathopen{}\left|\log(\varepsilon)\right|\mathclose{}
    +\mathbb{W}(\mathbf{a},\mathbf{c},\mathbf{d}_{1},\mathbf{d}_{2},\Phi)
    +o(1)
\end{align}
where we have used that $d_{i}=\pm1$ for all $i=1,2,\ldots,M_{1}$ and
$d_{j,k}=\pm1$ for all $k=1,2,\ldots,M_{2,j}$ and $j=0,1,\ldots,b$ to remove
the squares.
Since $\Omega\setminus\Omega_{\varepsilon}
=\Bigl(\bigcup\limits_{i=1}^{M_{1}}B_{\varepsilon}(a_{i})\Bigr)\cup
{\Bigl(\bigcup\limits_{j=0}^{b}\bigcup\limits_{k=1}^{M_{2,j}}B_{\varepsilon,+}(c_{jk})\Bigr)}$
and
\begin{equation*}
    |\rho_{\varepsilon}|=\frac{|x|}{\varepsilon},\hspace{15pt}
    |\nabla\rho_{\varepsilon}|=\frac{1}{\varepsilon}\hspace{15pt}
    \text{on }B_{\varepsilon}(0),
\end{equation*}
then
\begin{equation}\label{eq:SmallRegionCalculation}
    \int_{\Omega\setminus\Omega_{\varepsilon}}\!{}
    e_{\varepsilon}(u_{\varepsilon})=O(1).
\end{equation}
From \eqref{eq:LeadingOrder} and \eqref{eq:SmallRegionCalculation} we conclude
that
\begin{equation*}
    \limsup_{\varepsilon\to0^{+}}\frac{E_{\varepsilon}(u_{\varepsilon})}
    {\mathopen{}\left|\log(\varepsilon)\right|\mathclose{}}
    =\left\|J_{*}\right\|
\end{equation*}
where $J_{*}=\pi\sum\limits_{i=1}^{M_{1}}d_{i}\delta_{a_{i}}+
\frac{\pi}{2}\sum\limits_{j=0}^{b}
\sum\limits_{k=1}^{M_{2,j}}d_{j,k}\delta_{c_{j,k}}$.
Next, observe that since $|u_{\varepsilon}|=1$ on $\Omega_{\varepsilon}$ and
$u_{\varepsilon}$ is smooth here then $Ju_{\varepsilon}=0$ on
$\Omega_{\varepsilon}$.
Notice that for each $i=1,2,\ldots,M_{1}$ we have, for
$\varphi\in{}C^{0,\alpha}(\Omega)$, that
\begin{align*}
    \int_{B_{\varepsilon}(a_{i})}\!{}\varphi(x)\cdot{}Ju_{\varepsilon}(x)
    \mathrm{d}x
    &=\int_{B_{\varepsilon}(a_{i})}\!{}[\varphi(x)-\varphi(a_{i})]
    \cdot{}Ju_{\varepsilon}(x)\mathrm{d}x
    +\varphi(a_{i})\int_{B_{\varepsilon}(a_{i})}\!{}Ju_{\varepsilon}(x)
    \mathrm{d}x\\
    &=\frac{\varphi(a_{i})}{2}\int_{B_{\varepsilon}(a_{i})}\!{}
    \nabla\times{}ju_{\varepsilon}(x)
    \mathrm{d}x
    +O(\varepsilon^{\alpha}
    \mathopen{}\left|\log(\varepsilon)\right|\mathclose{})\\
    &=\frac{\varphi(a_{i})}{2}\int_{\partial{}B_{\varepsilon}(a_{i})}\!{}
    ju_{\varepsilon}\cdot\tau
    +O(\varepsilon^{\alpha}
    \mathopen{}\left|\log(\varepsilon)\right|\mathclose{}).
\end{align*}
Since $|u_{\varepsilon}|=1$ on $\partial{}B_{\varepsilon}(a_{i})$
then from the construction of $u_{*}^{T}$ we have that
\begin{equation*}
    \frac{\varphi(a_{i})}{2}\int_{\partial{}B_{\varepsilon}(a_{i})}\!{}
    ju_{\varepsilon}\cdot\tau
    =\pi{}d_{i}\varphi(a_{i}).
\end{equation*}
A similar calculation holds $c_{jk}$ for $j=0,1,\ldots,b$ and
$k=1,2,\ldots,M_{2,j}$ except there are additional error terms due to the
boundary.

\section*{Appendix}\label{Appendix}
\addcontentsline{toc}{section}{Appendix}
In this Appendix we provide a proof for a relation between the Euler
characteristic of $\Omega$ and the number of boundary components provided
$\partial\Omega$ is $C^{2,1}$.
In addition, we show that a general $C^{2,1}$ domain, with possibly
non-trivial homology, is obtained by excising simply connected sets from
a larger simply connected set (i.e. general domains of this regularity are
obtained by making holes in the domain).
These results will be needed to establish a relationship between the
possible degree configurations and canonical harmonic map to the topological
properties of the domain.\\

In the next lemma we follow the ideas from \cite{BJOS} to show that the first
Betti number of $\overline{\Omega}$ corresponds, for a bounded, connected open
set $\Omega\subseteq\mathbb{R}^{2}$
with $C^{2}$-boundary, to one less than the number of connected components of
its boundary.
In addition, we demonstrate that the second Betti number of $\overline{\Omega}$ is zero.
\begin{lemmaA}\label{BettiNumberLemma}
    Suppose $\Omega\subseteq\mathbb{R}^{2}$ is an open, bounded, connected set
    with $C^{2,1}$-boundary.
    Then if we let $b+1$, where $b\ge0$, denote the number of connected
    components of $\partial\Omega$ then we have
    \begin{equation*}
        \beta_{1}(\overline{\Omega})=b,\hspace{20pt}
        \beta_{2}(\overline{\Omega})=0
    \end{equation*}
    where $\beta_{i}(\overline{\Omega})$ denotes the $i^{\text{th}}$
    Betti number of $\overline{\Omega}$.
\end{lemmaA}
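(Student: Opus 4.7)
The plan is to identify $\overline{\Omega}$ topologically as a closed disk with $b$ disjoint open topological disks removed, from which the Betti numbers follow by standard algebraic topology. First, since $\partial\Omega$ is a compact $C^{2,1}$ one-dimensional submanifold of $\mathbb{R}^{2}$, each of its $b+1$ connected components is a compact connected one-manifold without boundary, hence a simple closed curve. I would then invoke the Jordan--Schoenflies theorem (applicable since each component is a Jordan curve of regularity well exceeding what is needed): each component $(\partial\Omega)_{i}$ bounds a unique bounded open region $D_{i}\subseteq\mathbb{R}^{2}$ whose closure is homeomorphic to the closed unit disk.

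Next I would establish the nested-disk structure. Because $\Omega$ is bounded and connected, it is contained in exactly one such $D_{i}$, which we relabel as $D_{0}$, and each remaining $(\partial\Omega)_{i}$ with $i\ge1$ sits inside $D_{0}$. For $i=1,\ldots,b$ the open disk $D_{i}$ must be disjoint from $\Omega$: otherwise the connected set $\Omega$, which avoids $(\partial\Omega)_{i}$, would meet both the interior $D_{i}$ and its exterior, contradicting the Jordan curve theorem. Disjointness of the $D_{i}$ for $i=1,\ldots,b$ follows similarly, since for any two disjoint Jordan curves one of the bounded components either contains the other or is disjoint from it, and the ``contains'' case would force the intermediate $\Omega$ to be disconnected. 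This gives
\[
    \overline{\Omega}
    \;=\;\overline{D_{0}}\setminus\bigsqcup_{i=1}^{b}D_{i},
\]
a closed topological disk with $b$ disjoint open topological disks removed.

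From this description the Betti numbers are immediate: $\overline{\Omega}$ deformation retracts onto a wedge of $b$ circles, so $H_{1}(\overline{\Omega})\cong\mathbb{Z}^{b}$ and $H_{2}(\overline{\Omega})=0$, yielding $\beta_{1}(\overline{\Omega})=b$ and $\beta_{2}(\overline{\Omega})=0$. An equivalent route, closer in spirit to \cite{BJOS}, is to apply Mayer--Vietoris to a thickening of the decomposition $\mathbb{R}^{2}=\overline{\Omega}\cup(\mathbb{R}^{2}\setminus\Omega)$ whose intersection is homotopy equivalent to $\bigsqcup_{i=0}^{b}(\partial\Omega)_{i}\simeq\bigsqcup_{i=0}^{b}S^{1}$, using contractibility of $\mathbb{R}^{2}$ together with the count of connected components of $\mathbb{R}^{2}\setminus\Omega$. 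The main obstacle is the nesting/disjointness step: turning the intuitive picture into a rigorous claim that the bounded regions bounded by interior boundary components are mutually disjoint and all lie inside $D_{0}$ requires a careful application of the Jordan curve theorem together with the connectedness of $\Omega$; everything else is either standard smooth topology of one-manifolds or a textbook homotopy-type computation.
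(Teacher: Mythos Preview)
Your argument is correct but takes a genuinely different route from the paper. The paper invokes Lefschetz duality to identify $H_{1}(\overline{\Omega})\cong H^{1}(\overline{\Omega},\partial\Omega)$ and then cites Lemma~13 of \cite{BJOS} for $H^{1}(\overline{\Omega},\partial\Omega)\cong\mathbb{Z}^{b}$; for $\beta_{2}$ it again applies Lefschetz duality together with $H_{0}(\overline{\Omega},\partial\Omega)=0$. By contrast, you identify $\overline{\Omega}$ directly as a closed disk with $b$ open disks removed via Jordan--Schoenflies and then read off the homotopy type. Your approach is more elementary in the algebraic-topology machinery used and, as a bonus, yields the disk-with-holes description that the paper establishes separately later in Lemma~\ref{FilledInDomain}; the price is the nesting/disjointness argument, which (as you note) requires some care with the Jordan curve theorem and connectedness of $\Omega$ to rule out, e.g., $\Omega\subset D_{i}$ for two distinct $i$. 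The paper's route is shorter and sidesteps that case analysis entirely, at the cost of importing Lefschetz duality and an external reference.
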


\begin{proof}
    Since $\overline{\Omega}$ is a compact orientable $2$-manifold with
    boundary then by Lefschetz Duality, see Theorem $3.43$ of \cite{Hat} with
    $A=\partial{}\Omega$ and $B=\varnothing$, we have
    \begin{equation*}
        H_{1}(\overline{\Omega})\cong{}
        H^{1}(\overline{\Omega},\partial{}\Omega).
    \end{equation*}
    It is shown in Lemma $13$ of \cite{BJOS} that
    \begin{equation*}
        H^{1}(\overline{\Omega},\partial\Omega)\cong{}\mathbb{Z}^{b}.
    \end{equation*}
    Finally, by definition, we have that
    \begin{equation*}
        \beta_{1}(\overline{\Omega})=\text{rank}(H_{1}(\overline{\Omega}))
        =\text{rank}(\mathbb{Z}^{b})=b.
    \end{equation*}
    By another application of Lefschetz Duality with $A=\partial\Omega$ and
    $B=\varnothing$ we have
    \begin{equation*}
        H_{2}(\overline{\Omega})\cong{}H^{0}(\overline{\Omega},\partial\Omega)
        \cong{}\text{Hom}(H_{0}(\overline{\Omega},\partial\Omega),\mathbb{Z}).
    \end{equation*}
    Notice that $H_{0}(\overline{\Omega},\partial\Omega)=0$ since
    $\overline{\Omega}$ is a connected two dimensional manifold with
    boundary, and hence path-connected, then any
    $0$-chain in $\overline{\Omega}$ differs from a $0$-chain in $\partial\Omega$
    by a boundary.
    We conclude that
    \begin{equation*}
        \text{Hom}(H_{0}(\overline{\Omega},\partial\Omega),\mathbb{Z})
        \cong\text{Hom}(0,\mathbb{Z})\cong0.
    \end{equation*}
\end{proof}

\begin{lemmaA}\label{HomolgyInterior}
    Suppose $\Omega\subseteq\mathbb{R}^{2}$ is an open, bounded, connected
    subset with $C^{2,1}$-boundary.
    Then
    \begin{equation}
        H_{i}(\Omega)=H_{i}(\overline{\Omega}),\hspace{15pt}\text{for }i=0,1,2.
    \end{equation}
\end{lemmaA}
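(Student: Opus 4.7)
The plan is to show that the inclusion $\iota \colon \Omega \hookrightarrow \overline{\Omega}$ is a homotopy equivalence, from which the isomorphisms $H_i(\Omega) \cong H_i(\overline{\Omega})$ follow by functoriality of singular homology. The key tool is a collar neighborhood of $\partial\Omega$ inside $\overline{\Omega}$, which is already built into the tangent-normal coordinate construction of Section~\ref{TangentNormalCoordinates}.

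First, I would observe that the maps $X_i(y_1, y_2) = \gamma_i(y_1) + y_2 \nu_i(y_1)$ extend continuously to $(\mathbb{R}/L_i\mathbb{Z}) \times [0, r_0)$, yielding homeomorphisms onto closed collar neighborhoods $\overline{\Omega}_{i, r_0}$ of each boundary component. Because $r_0 < \tfrac{1}{2}\,\mathrm{inj}(\partial\Omega)$, these collars are pairwise disjoint, so the union $\bigcup_{i=0}^{b} \overline{\Omega}_{i, r_1/4}$ is a disjoint union of collars whose complement in $\overline{\Omega}$ is already contained in $\Omega$.

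Next, fix a continuous non-decreasing function $\phi \colon [0, \infty) \to [0, \infty)$ with $\phi(t) = r_1/8$ for $t \in [0, r_1/8]$ and $\phi(t) = t$ for $t \geq r_1/4$, and define a retraction $r \colon \overline{\Omega} \to \Omega$ by setting $r(x) = x$ off $\bigcup_{i=0}^{b} \overline{\Omega}_{i, r_1/4}$ and $r(X_i(y_1, y_2)) = X_i(y_1, \phi(y_2))$ in each collar. Since $\phi(r_1/4) = r_1/4$, the two definitions agree on the overlap, and since $\phi > 0$, the image of $r$ lies in $\Omega$. The straight-line homotopy $H_t(x) = x$ off the collars and $H_t(X_i(y_1, y_2)) = X_i(y_1, (1-t)y_2 + t\phi(y_2))$ inside interpolates continuously between $H_0 = \mathrm{id}_{\overline{\Omega}}$ and $H_1 = \iota \circ r$; since $(1-t)y_2 + t\phi(y_2) > 0$ whenever $y_2 > 0$, the homotopy restricts to a homotopy $\Omega \times [0,1] \to \Omega$ as well. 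This yields $\iota \circ r \simeq \mathrm{id}_{\overline{\Omega}}$ and $r \circ \iota \simeq \mathrm{id}_\Omega$, so $\iota$ is a homotopy equivalence.

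The main technical point is verifying global continuity of $r$ and $H_t$ across the interface $\{\mathrm{dist}(\,\cdot\,, \partial\Omega) = r_1/4\}$ and between distinct collars; this is built into the choice of $\phi$ (the identity on $[r_1/4, \infty)$) and the disjointness of collars guaranteed by the injectivity radius bound. With homotopy equivalence in hand, the isomorphisms $H_i(\Omega) \cong H_i(\overline{\Omega})$ for $i = 0, 1, 2$ are immediate.
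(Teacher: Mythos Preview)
Your proof is correct and follows essentially the same approach as the paper: both arguments show that the inclusion $\iota\colon\Omega\hookrightarrow\overline{\Omega}$ is a homotopy equivalence by constructing a retraction that pushes the collar neighborhood of $\partial\Omega$ inward along the normal direction via the tangent--normal coordinates $X_i$. Your version is in fact more carefully written, since the interpolating function $\phi$ guarantees continuity across the interface $\{\mathrm{dist}(\cdot,\partial\Omega)=r_1/4\}$, whereas the paper's retraction $g$ (which adds a fixed translate $r_1\nu_i$ in the collar and is the identity outside) is only sketched and, as literally written, does not match up on the overlap.
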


\begin{proof}
This result follows from the fact that the maps
$f\colon\Omega\to\overline{\Omega}$ and $g\colon\overline{\Omega}\to\Omega$
defined by
\begin{equation*}
    f=\iota_{\Omega},\hspace{10pt}
    g(x)=
    \begin{cases}
        x,&\text{dist}(x,\partial\Omega)>r_{1}\\
        x+r_{1}\nu_{i}((\psi_{i,j}^{-1})^{1}(x)),&\text{if }
        x\in\overline{\mathcal{U}}_{i,j}
    \end{cases}
\end{equation*}
form a homotopy equivalence for $\Omega$ and $\overline{\Omega}$.
\end{proof}

\begin{corollaryA}\label{EulerCharacteristic}
    Suppose $\Omega\subseteq\mathbb{R}^{2}$ is an open, bounded, connected
    subset with $C^{2,1}$-boundary.
    Then
    \begin{equation}
        \chi_{Euler}(\Omega)=\chi_{Euler}(\overline{\Omega})=1-b.
    \end{equation}
\end{corollaryA}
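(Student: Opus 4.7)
The plan is to deduce the corollary directly from Lemmas \ref{BettiNumberLemma} and \ref{HomolgyInterior}, by expressing the Euler characteristic as the alternating sum of Betti numbers. Since $\Omega \subseteq \mathbb{R}^{2}$, all homology in degrees $\geq 3$ vanishes, so only $\beta_{0}$, $\beta_{1}$, and $\beta_{2}$ need to be accounted for.

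First I would observe that $\overline{\Omega}$ is connected (since $\Omega$ is connected and $\overline{\Omega}$ is its closure), so $\beta_{0}(\overline{\Omega}) = 1$. Combining this with Lemma \ref{BettiNumberLemma}, which gives $\beta_{1}(\overline{\Omega}) = b$ and $\beta_{2}(\overline{\Omega}) = 0$, one computes
\begin{equation*}
    \chi_{Euler}(\overline{\Omega}) = \beta_{0}(\overline{\Omega}) - \beta_{1}(\overline{\Omega}) + \beta_{2}(\overline{\Omega}) = 1 - b + 0 = 1 - b.
\end{equation*}

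Next I would invoke Lemma \ref{HomolgyInterior}, which asserts the isomorphism $H_{i}(\Omega) \cong H_{i}(\overline{\Omega})$ for $i = 0, 1, 2$. Taking ranks on both sides yields $\beta_{i}(\Omega) = \beta_{i}(\overline{\Omega})$ for each such $i$, so the Euler characteristics of $\Omega$ and $\overline{\Omega}$ coincide, giving $\chi_{Euler}(\Omega) = \chi_{Euler}(\overline{\Omega}) = 1 - b$ as claimed.

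There is no real obstacle here: the two preceding lemmas have already done all the topological work, and the corollary is essentially a bookkeeping consequence. The only minor care needed is in confirming that the higher Betti numbers vanish (which is automatic since $\overline{\Omega}$ is a $2$-dimensional manifold with boundary) and in noting connectedness of $\overline{\Omega}$ so that $\beta_{0} = 1$.
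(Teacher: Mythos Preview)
Your proposal is correct and takes essentially the same approach as the paper: the paper's proof simply states that the result follows from Lemmas \ref{BettiNumberLemma} and \ref{HomolgyInterior}, and you have spelled out precisely how, via the alternating sum of Betti numbers.
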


\begin{proof}
    This follows from Lemmas \ref{BettiNumberLemma} and \ref{HomolgyInterior}.
\end{proof}

\begin{lemmaA}\label{FilledInDomain}
    Suppose $\Omega\subseteq\mathbb{R}^{2}$ is an open, bounded, connected set with
    $C^{2,1}$-boundary.
    Suppose also that $\partial\Omega$ has $b+1$, where $b\ge0$, connected components.
    Then we have
    \begin{equation}\label{ExteriorHomology}
        H_{0}(\mathbb{R}^{2}\setminus\Omega)\cong{}\mathbb{Z}^{b+1},
        \hspace{15pt}
        H_{1}(\mathbb{R}^{2}\setminus\Omega)\cong{}\mathbb{Z}
    \end{equation}
    and hence $\mathbb{R}^{2}\setminus\Omega$ consists of $b+1$
    connected components and all, except one, are simply connected.
    In addition, if
    $\mathbb{R}^{2}\setminus\overline{\Omega}
    =B_{0}\sqcup\bigsqcup\limits_{i=1}^{b}B_{i}$
    where $B_{0}$ is the unbounded component of
    $\mathbb{R}^{2}\setminus\overline{\Omega}$ and $B_{i}$ for $1\le{}i\le{}b$
    denote the bounded components then
    \begin{equation}\label{FilledInHomology}
        H_{1}\biggl(\Omega\cup\bigcup_{i=1}^{b}\overline{B}_{i}\biggr)=0
    \end{equation}
    and hence $\Omega$ is obtained from a simply connected set by
    removing $b$ simply connected open sets with $C^{2}$-boundary from it.
\end{lemmaA}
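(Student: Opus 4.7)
The plan is to derive the two displayed homology statements via Alexander duality in $S^2 = \mathbb{R}^2 \cup \{\infty\}$ applied to the compact manifold with boundary $\overline{\Omega}$, and then to deduce the geometric structure of $\mathbb{R}^2 \setminus \Omega$ from the Jordan--Schoenflies theorem applied to each boundary component.

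First I would use that $\overline{\Omega}$ is a compact CW complex (indeed a compact $2$-manifold with $C^{2,1}$ boundary), so Čech and singular cohomology agree, and Alexander duality gives $\tilde H_i(S^2 \setminus \overline{\Omega}) \cong \tilde H^{1-i}(\overline{\Omega})$. Combining Lemma \ref{BettiNumberLemma} with the universal coefficient theorem yields $\tilde H^1(\overline{\Omega}) \cong \mathbb{Z}^b$ and $\tilde H^0(\overline{\Omega}) = 0$, so $S^2 \setminus \overline{\Omega}$ has $b+1$ connected components and vanishing first homology. Removing the point at infinity (which lies in the unique unbounded planar component) preserves the component count, and to pass from $\mathbb{R}^2 \setminus \overline{\Omega}$ to $\mathbb{R}^2 \setminus \Omega$ I would exploit the collar neighborhood of $\partial\Omega$ provided by the $C^{2,1}$ regularity: pushing slightly inward produces a deformation retraction of $\mathbb{R}^2 \setminus \Omega$ onto $\mathbb{R}^2 \setminus \Omega_\varepsilon$ for a slightly enlarged $\Omega_\varepsilon \supset \overline{\Omega}$, and the same collar gives a retraction of $\mathbb{R}^2 \setminus \overline{\Omega}$ onto the same set. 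This establishes $\mathbb{R}^2 \setminus \Omega \simeq \mathbb{R}^2 \setminus \overline{\Omega}$, proving \eqref{ExteriorHomology}.

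Next I would identify the components of $\mathbb{R}^2 \setminus \overline{\Omega}$ concretely. Each $(\partial\Omega)_i$ is a $C^{2,1}$ simple closed curve, hence a Jordan curve, and by the Jordan--Schoenflies theorem it bounds two regions, each homeomorphic to an open disk (one bounded, one unbounded). The indexing convention (with $(\partial\Omega)_0$ outermost and the interior components inside it) together with the connectedness of $\Omega$ forces the standard nested picture: the bounded components $B_1, \dots, B_b$ of $\mathbb{R}^2 \setminus \overline{\Omega}$ are exactly the bounded Schoenflies disks enclosed by $(\partial\Omega)_1, \dots, (\partial\Omega)_b$, while $B_0$ is the unbounded Schoenflies exterior of $(\partial\Omega)_0$. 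Each $B_i$ with $i \ge 1$ is therefore simply connected with $C^{2,1}$ boundary, consistent with the vanishing $\tilde H_1(S^2 \setminus \overline{\Omega})=0$.

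Finally, to obtain \eqref{FilledInHomology}, I would show that the set $\Omega \cup \bigcup_{i=1}^b \overline{B}_i$ coincides with the bounded Schoenflies disk $D$ enclosed by $(\partial\Omega)_0$; since $D$ is homeomorphic to an open disk it has trivial $H_1$, and excising from $D$ the $b$ simply connected open sets $B_1, \dots, B_b$ recovers $\Omega$. The main obstacle I anticipate is the set-theoretic verification that $\Omega \cup \bigcup_{i=1}^b \overline{B}_i = D$, which reduces to checking that the only bounded components of $\mathbb{R}^2 \setminus \overline{\Omega}$ are the Schoenflies interiors of the inner boundary curves and that no $B_i$ with $i \ge 1$ can contain another boundary component of $\Omega$; this follows from connectedness of $\Omega$ together with repeated application of the Jordan curve theorem to the nested curves $(\partial\Omega)_i$.
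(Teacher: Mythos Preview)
Your argument is correct, and it reaches the same conclusions as the paper, but the route is genuinely different in emphasis.

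The paper's proof is extremely terse: it observes that each component of $\partial\Omega$ is diffeomorphic to $\mathbb{S}^{1}$ by the classification of one-manifolds, and then derives \eqref{ExteriorHomology} directly from the Jordan--Schoenflies theorem together with the connectedness of $\Omega$. For \eqref{FilledInHomology} the paper simply invokes Lemma~\ref{BettiNumberLemma} applied to the filled domain (which now has a single boundary component $(\partial\Omega)_{0}$, hence $b=0$ there and $\beta_{1}=0$). So the paper uses Jordan--Schoenflies for the first part and the Betti-number lemma for the second.

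You essentially reverse this. For \eqref{ExteriorHomology} you bring in Alexander duality in $S^{2}$, combined with Lemma~\ref{BettiNumberLemma} and the universal coefficient theorem, to compute the (co)homology of the complement, and only then appeal to Jordan--Schoenflies to identify the components concretely. For \eqref{FilledInHomology} you show directly that the filled set is the Schoenflies disk bounded by $(\partial\Omega)_{0}$, rather than reapplying Lemma~\ref{BettiNumberLemma}. Your approach is more machinery-heavy for the first statement but gives a cleaner geometric picture for the second; the paper's approach is more elementary for the first statement but relies on having already proved Lemma~\ref{BettiNumberLemma} for the second. One small point: your Alexander duality step yields $\tilde H_{1}(S^{2}\setminus\overline{\Omega})=0$, and you should make explicit that removing the point at infinity from the (disk-like) component containing it is what produces the single $\mathbb{Z}$ in $H_{1}(\mathbb{R}^{2}\setminus\Omega)$; this is implicit in your later identification of $B_{0}$ as a Schoenflies exterior, but it is worth saying in one line where the $\mathbb{Z}$ comes from.
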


\begin{proof}
By the Classification of One-Manifolds, see page $64$ of \cite{GuPo}, each
connected component of $\partial\Omega$ is diffeomorphic to $\mathbb{S}^{1}$.
Connectedness of $\Omega$ combined with the Jordan-Schonenflies theorem, see
Theorem $4$ on page $72$ of \cite{Moi}, gives \eqref{ExteriorHomology}.
\eqref{FilledInHomology} follows from Lemma \ref{BettiNumberLemma}.
\end{proof}

\bibliographystyle{acm}
\bibliography{Bibliography}

\begin{thebibliography}{10}

\bibitem{ABM}
{\sc Alama, S., Bronsard, L., and Millot, V.}
\newblock {$\Gamma$}-convergence of 2{D} {G}inzburg-{L}andau functionals with
  vortex concentration along curves.
\newblock {\em J. Anal. Math. 114\/} (2011), 341--391.

\bibitem{ABvB}
{\sc Alama, S., Bronsard, L., and van Brussel, L.}
\newblock On minimizers of the 2d {G}inzburg-{L}andau energy with tangential
  anchoring.
\newblock \url{https://arxiv.org/pdf/2301.05361.pdf}.

\bibitem{AlBaOr}
{\sc Alberti, G., Baldo, S., and Orlandi, G.}
\newblock Variational convergence for functionals of {G}inzburg-{L}andau type.
\newblock {\em Indiana Univ. Math. J. 54}, 5 (2005), 1411--1472.

\bibitem{AlPo}
{\sc Alicandro, R., and Ponsiglione, M.}
\newblock Ginzburg-{L}andau functionals and renormalized energy: a revised
  {$\Gamma$}-convergence approach.
\newblock {\em J. Funct. Anal. 266}, 8 (2014), 4890--4907.

\bibitem{BJOS}
{\sc Baldo, S., Jerrard, R.~L., Orlandi, G., and Soner, H.~M.}
\newblock Convergence of {G}inzburg-{L}andau functionals in three-dimensional
  superconductivity.
\newblock {\em Arch. Ration. Mech. Anal. 205}, 3 (2012), 699--752.

\bibitem{BaTi}
{\sc Bardos, C.~W., and Titi, E.~S.}
\newblock {$C^{0,\alpha}$} boundary regularity for the pressure in weak
  solutions of the 2d {E}uler equations.
\newblock {\em Philos. Trans. Roy. Soc. A 380}, 2218 (2022), Paper No.
  20210073, 15.

\bibitem{Ba}
{\sc Bates, S.~M.}
\newblock Toward a precise smoothness hypothesis in {S}ard's theorem.
\newblock {\em Proc. Amer. Math. Soc. 117}, 1 (1993), 279--283.

\bibitem{BBH}
{\sc Bethuel, F., Brezis, H., and H\'{e}lein, F.}
\newblock {\em Ginzburg-{L}andau vortices}, vol.~13 of {\em Progress in
  Nonlinear Differential Equations and their Applications}.
\newblock Birkh\"{a}user Boston, Inc., Boston, MA, 1994.

\bibitem{CJ}
{\sc Contreras, A., and Jerrard, R.~L.}
\newblock Nearly parallel vortex filaments in the 3{D} {G}inzburg-{L}andau
  equations.
\newblock {\em Geom. Funct. Anal. 27}, 5 (2017), 1161--1230.

\bibitem{DoC}
{\sc do~Carmo, M.~P.}
\newblock {\em Differential geometry of curves \& surfaces}.
\newblock Dover Publications, Inc., Mineola, NY, 2016.
\newblock Revised \& updated second edition of [ MR0394451].

\bibitem{Go}
{\sc Gordon, W.~B.}
\newblock On the diffeomorphisms of {E}uclidean space.
\newblock {\em Amer. Math. Monthly 79\/} (1972), 755--759.

\bibitem{GuPo}
{\sc Guillemin, V., and Pollack, A.}
\newblock Differential topology, 2010.

\bibitem{Hat}
{\sc Hatcher, A.}
\newblock {\em Algebraic topology}.
\newblock Cambridge University Press, Cambridge, 2002.

\bibitem{IgJ}
{\sc Ignat, R., and Jerrard, R.~L.}
\newblock Renormalized energy between vortices in some {G}inzburg-{L}andau
  models on 2-dimensional {R}iemannian manifolds.
\newblock {\em Arch. Ration. Mech. Anal. 239}, 3 (2021), 1577--1666.

\bibitem{IgKur1}
{\sc Ignat, R., and Kurzke, M.}
\newblock An effective model for boundary vortices in thin-film micromagnetics.
\newblock \url{https://arxiv.org/pdf/2202.02778.pdf}.

\bibitem{RaIg}
{\sc Ignat, R., and Otto, F.}
\newblock A compactness result for {L}andau state in thin-film micromagnetics.
\newblock {\em Ann. Inst. H. Poincar\'{e} C Anal. Non Lin\'{e}aire 28}, 2
  (2011), 247--282.

\bibitem{JMS}
{\sc Jerrard, R., Montero, A., and Sternberg, P.}
\newblock Local minimizers of the {G}inzburg-{L}andau energy with magnetic
  field in three dimensions.
\newblock {\em Comm. Math. Phys. 249}, 3 (2004), 549--577.

\bibitem{JS}
{\sc Jerrard, R.~L., and Soner, H.~M.}
\newblock The {J}acobian and the {G}inzburg-{L}andau energy.
\newblock {\em Calc. Var. Partial Differential Equations 14}, 2 (2002),
  151--191.

\bibitem{KrPa}
{\sc Krantz, S.~G., and Parks, H.~R.}
\newblock {\em The implicit function theorem}.
\newblock Modern Birkh\"{a}user Classics. Birkh\"{a}user/Springer, New York,
  2013.
\newblock History, theory, and applications, Reprint of the 2003 edition.

\bibitem{Moi}
{\sc Moise, E.~E.}
\newblock {\em Geometric topology in dimensions {$2$} and {$3$}}.
\newblock Graduate Texts in Mathematics, Vol. 47. Springer-Verlag, New
  York-Heidelberg, 1977.

\bibitem{Pr}
{\sc Pressley, A.}
\newblock {\em Elementary differential geometry}, second~ed.
\newblock Springer Undergraduate Mathematics Series. Springer-Verlag London,
  Ltd., London, 2010.

\bibitem{VL}
{\sc Volovik, G.~E., and Lavrentovich, O.~D.}
\newblock Topological dynamics of defects: Boojums in nematic drops.
\newblock {\em Journal of Experimental and Theoretical Physics 58\/} (1983),
  1159--1167.

\end{thebibliography}

\end{document}